\newcommand{\define}{\stackrel{\mbox{\tiny def}}{=}}
\newtheorem{definition}{Definition}
\newtheorem{corollary}{Corollary}
\newtheorem{lemma}{Lemma}
\newtheorem{proof}{Proof}
\newcommand{\ostar}{\mathbin{\mathpalette\make@circled\star}}
\newcommand\oast{\stackMath\mathbin{\stackinset{c}{0ex}{c}{0ex}{\ast}{\bigcirc}}}
\newcommand{\removelatexerror}{\let\@latex@error\@gobble}
\newcommand*{\rom}[1]{\expandafter\@slowromancap\romannumeral #1@}
\newcommand\latinabbrev[1]{
  \peek_meaning:NTF . {
    #1\@}%
  { \peek_catcode:NTF a {
      #1.\@ }%
    {#1.\@}}}
\titleclass{\subsubsubsection}{straight}[\subsubsection]
\begin{document}
\vspace{1cm}
\title{T-product Tensors—Part I: Inequalities}\vspace{1.8cm}
\author{Shih~Yu~Chang\thanks{Shih Yu Chang is with the Department of Applied Data Science,		San Jose State University, San Jose, CA, U. S. A. E-mail: 
			shihyu.chang@sjsu.edu} \quad and \quad Yimin Wei
\thanks{ Corresponding author (Y. Wei). E-mail:  ymwei@fudan.edu.cn, yimin.wei@gmail.com. Yimin Wei is with the School of Mathematical Sciences and Shanghai Key Laboratory of Contemporary Applied Mathematics, Fudan University, Shanghai, 200433, PR China. }}                    
\maketitle

\date{}

\begin{abstract}
The T-product operation between two three-order tensors was invented around 2011 and it arises from many applications, such as signal processing, image feature extraction, machine learning, computer vision, and the multi-view clustering problem. Although there are many pioneer works about T-product tensors, there are no works dedicated to inequalities associated with T-product tensors. In this work, we first attempt to build inequalities at the following aspects: (1) trace function nondecreasing/convexity; (2) Golden-Thompson inequality for T-product tensors; (3) Jensen’s T-product inequality; (4) Klein’s T-product inequality. All these inequalities are related to generalize celebrated Lieb’s concavity theorem from matrices to T-product tensors. This new version of Lieb's concavity theorem under T-product tensor will be used to determine the tail bound for the maximum eigenvalue induced by independent sums of random Hermitian T-product, which is the key tool to derive various new tail bounds for random T-product tensors. Besides, Qi et. al~\cite{qi2021tquadratic} introduces a new concept, named \emph{eigentuple}, about T-product tensors and they apply this concept to study nonnegative (positive) definite properties of T-product tensors. The final main contribution of this work is to develop the Courant-Fischer Theorem with respect to eigentuples, and this theorem helps us to understand the relationship between the minimum eigentuple and the maximum eigentuple. The main content of this paper is Part I of a serious task about T-product tensors. The Part II of this work will utilize these new inequalities and Courant-Fischer Theorem under T-product tensors to derive tail bounds of the extreme eigenvalue and the maximum eigentuple for sums of random T-product tensors, e.g., T-product tensor Chernoff and T-product tensor Bernstein bounds.
\end{abstract}

{\bf Keywords:}
Hermitian T-product tensors, eigentuples, trace function, Lieb’s concavity for T-product tensors, Courant-Fischer theorem for T-product tensors. \\

{\bf AMS Subject Classification:}  15A69; 65F10

\newpage

\section{Introduction}\label{sec:Introduction} 

\subsection{T-product Tensors}
The T-product operation between two three order tensors was introduced by Kilmer and her collaborators in~\cite{kilmer2011factorization, kilmer2013third}. It has been shown as a powerful tool in many fields: signal processing~\cite{zhang2016exact, semerci2014tensor}, machine learning~\cite{settles2007multiple}, computer vision~\cite{zhang2014novel, martin2013order}, image processing~\cite{khalil2021efficient}, low-rank tensor approximation~\cite{xu2013parallel, zhou2017tensor, qi2021tsingular} etc. Due to wide applications of T-product, T-SVD and tubal ranks, Qi et. al~\cite{qi2021tquadratic}. extend eigentuple concept first defined from~\cite{braman2010third} to study properties for symmetry of T-product tensors and positive (nonnegative) semidefiniteness of T-product tensors by defining a T-quadratic form, whose variable is an $m \times p$ matrix, and whose value is a $p$-dimensional vector. They further show that a T-quadratic form is positive semidefinite (definite) if and only if the smallest eigentuple of the corresponding T-symmetric tensor is nonnegative (positive). Besides T-quadratic form, general functions for T-product tensors and their properties are also studied based on T-SVD, see~\cite{li2020continuity, zheng2021t, miao2021t, miao2020generalized}. 
However, none of these works discussed further issues about inequalities associated with T-product tensors. The first part of this work about T-product tensors is to develop several new T-product tensors inequalities, which are the main topics discussed by this paper. In the matrix setting, there are many useful applications about these similar inequalities under the traditional matrix product, e.g., quantum information processing~\cite{Lemm_2018}. The second part of this work is to apply these new inequalities about T-product tensors to tail bounds estimation of the maximum eigenvalue and the maximum eigentuple for sums of random T-product tensors, e.g., Chernoff and Bernstein bounds. We will introduce these new inequalities about T-product tensors obtained at this Part I work at the next subsection. 

\subsection{New Inequalities about T-product Tensors}

In this work, we define trace, denoted by $\mathrm{Tr}$, as the summation of f-diagonal entries of a given symmetric T-product tensor $\mathcal{C} \in \mathbb{C}^{m \times m \times p}$ and study properties of trace for T-product tensors. Our first main inequality about trace is following theorem:
\begin{restatable}[Monotonicity and Convexity of T-product Trace Function]{thm}{ThmConvexityMonotonicityTraceFunc}\label{thm:convexity and monotonicity of a trace func intro}
Let $f: \mathbb{R} \rightarrow \mathbb{R}$ be a continuous function with non-decreasing / convex / strictly convex properties, then so is the mapping $\mathcal{C} \rightarrow \mathrm{Tr} \left( f ( \mathcal{C})\right)$. 
\end{restatable}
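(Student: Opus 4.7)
The natural strategy is to pass to the Fourier domain, where a T-product tensor $\mathcal{C} \in \mathbb{C}^{m\times m\times p}$ becomes block diagonal with $p$ blocks of size $m\times m$, and T-product reduces to ordinary block-wise matrix multiplication. Let $\bar{\mathcal{C}}$ denote the discrete Fourier transform of $\mathcal{C}$ along the third mode, with frontal slices $\bar{C}^{(i)} \in \mathbb{C}^{m\times m}$. The plan is to (i) verify that the T-product trace, the functional calculus $f(\mathcal{C})$, and the semidefinite order on Hermitian T-product tensors all decouple across the Fourier slices; (ii) apply the classical matrix versions of the three claims slice by slice; and (iii) recombine.

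First I would pin down the dictionary between $\mathcal{C}$ and $\{\bar{C}^{(i)}\}_{i=1}^{p}$: Hermitian T-product tensors correspond to Hermitian matrices on each slice; $\mathcal{C}$ is positive semidefinite as a T-product tensor if and only if every $\bar{C}^{(i)}$ is positive semidefinite in the usual sense; the T-product eigendecomposition is exactly the collection of matrix eigendecompositions on each slice, which gives $\overline{f(\mathcal{C})}^{(i)} = f(\bar{C}^{(i)})$ for the functional calculus induced by the T-product spectral decomposition; and, with the appropriate normalization, $\mathrm{Tr}(f(\mathcal{C}))$ equals a fixed positive multiple of $\sum_{i=1}^{p}\mathrm{tr}\bigl(f(\bar{C}^{(i)})\bigr)$, the sum of ordinary matrix traces.

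With this dictionary in hand, each of the three claims reduces to a standard matrix result combined with the fact that a positive linear combination of (non-decreasing, convex, strictly convex) functions inherits the same property. For non-decreasing $f$, if $\mathcal{A} \preceq \mathcal{B}$ in the T-product PSD order then $\bar{A}^{(i)} \preceq \bar{B}^{(i)}$ in the usual PSD order for every $i$, so Weyl's monotonicity principle for eigenvalues gives $\mathrm{tr}(f(\bar{A}^{(i)})) \le \mathrm{tr}(f(\bar{B}^{(i)}))$; summing over $i$ yields $\mathrm{Tr}(f(\mathcal{A})) \le \mathrm{Tr}(f(\mathcal{B}))$. For (strict) convexity, the mapping $X \mapsto \mathrm{tr}(f(X))$ on Hermitian matrices is (strictly) convex whenever $f$ is—this follows from Peierls' inequality applied to any orthonormal eigenbasis of a convex combination—and this survives summation over slices, giving (strict) convexity on the Fourier side; since the map $\mathcal{C}\mapsto\{\bar{C}^{(i)}\}$ is linear and bijective, the property pulls back to $\mathcal{C}$.

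The main obstacle I foresee is largely bookkeeping rather than conceptual: one must nail down the precise scalar conventions so that the T-product trace, the DFT normalization constant, and the definition of $f(\mathcal{C})$ through the T-product spectral decomposition agree, and one must confirm that strict convexity really does survive the summation over Fourier slices. The latter point is the only place where a subtlety can sneak in: it succeeds because the DFT is a bijection, so any nontrivial convex combination $\lambda\mathcal{A} + (1-\lambda)\mathcal{B}$ with $\mathcal{A}\neq\mathcal{B}$ produces at least one slice index $i$ on which $\bar{A}^{(i)}\neq\bar{B}^{(i)}$, and on that slice the classical strict inequality fires while the remaining slices contribute only non-strict inequalities.
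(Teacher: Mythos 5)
Your route is genuinely different from the paper's. The paper stays entirely inside the T-product formalism: it proves monotonicity by an integral argument on $g(t)=\mathrm{Tr}\left(f\left(\mathcal{D}+t(\mathcal{C}-\mathcal{D})\right)\right)$ (Lemma~\ref{lma:monotonicity of trace func}), and it proves (strict) convexity by first establishing a T-product version of Peierls' inequality over orthonormal bases of $\mathbb{C}^{m\times p}$ (Lemma~\ref{lma:Peierls Inequality}) and then evaluating it at the eigenmatrices of $\tfrac{\mathcal{C}+\mathcal{D}}{2}$ to obtain midpoint convexity together with the equality analysis. You instead push everything through the DFT block-diagonalization of $\mathrm{bcirc}(\mathcal{C})$ and quote the classical matrix statements slice by slice. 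Conceptually your reduction is the cleaner and more standard one, and for the non-decreasing and convex cases it works: $\mathcal{C}\mapsto\{\bar{C}^{(i)}\}$ is linear, the TPSD order decouples across Fourier slices, and the functional calculus acts slice-wise.

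The gap is exactly the bookkeeping point you deferred, and it does not resolve the way you assumed. With the paper's definition~\eqref{eq:trace def}, $\mathrm{Tr}(\mathcal{D})=\sum_{k}\mathrm{tr}(\mathbf{D}^{(k)})=\mathrm{tr}\bigl(\bar{D}^{(1)}\bigr)$: the T-product trace is the ordinary trace of the single DC Fourier slice, not a positive multiple of $\sum_{i}\mathrm{tr}(\bar{D}^{(i)})$ (that sum equals $\mathrm{tr}(\mathrm{bcirc}(\mathcal{D}))=p\,\mathrm{tr}(\mathbf{D}^{(1)})$, a different quantity). Monotonicity and plain convexity survive this correction, since you may simply compose the classical result on the first Fourier slice with the linear, order-preserving map $\mathcal{C}\mapsto\bar{C}^{(1)}$; but your strict-convexity argument collapses, because the slice on which $\bar{A}^{(i)}\neq\bar{B}^{(i)}$ need not be the one the trace sees. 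Concretely, for $m=1$, $p=2$, $f(x)=x^{2}$, the tensors $\mathcal{A}=(0,0)$ and $\mathcal{B}=(-b,b)$ with $b\neq 0$ are distinct Hermitian T-product tensors with $\bar{A}^{(1)}=\bar{B}^{(1)}=0$, and $\mathrm{Tr}\left(f(\lambda\mathcal{A}+(1-\lambda)\mathcal{B})\right)=0=\lambda\,\mathrm{Tr}\left(f(\mathcal{A})\right)+(1-\lambda)\,\mathrm{Tr}\left(f(\mathcal{B})\right)$ for every $\lambda$. So under the literal definition~\eqref{eq:trace def} the strict-convexity claim itself fails, and your proof of it cannot be repaired without replacing the trace by $\mathrm{tr}\circ\mathrm{bcirc}$ (equivalently $\sum_{i}\mathrm{tr}(\bar{C}^{(i)})$) --- which is in fact the normalization the paper's own Peierls lemma silently uses, since summing $\langle \mathbf{V},\mathcal{C}\star\mathbf{V}\rangle$ over an orthonormal basis of $\mathbb{C}^{m\times p}$ computes $\mathrm{tr}(\mathrm{bcirc}(\mathcal{C}))$ rather than~\eqref{eq:trace def}. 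With that trace your dictionary is correct and your whole argument goes through.
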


From trace definition, we will prove Golden–Thompson inequality for two Hermitian T-product tensors which will be utilized to prove T-product tensors martingale inequalities.   A Hermitian T-product tensor is a tensor equal to its Hermiitan transpose, which is defined by Eq.~\eqref{eq:Hermitian Transpose Def}. 
\begin{restatable}[Golden-Thompson inequality for T-product Tensors]
{thm}{ThmGTInequalityTProduct}\label{thm:GT Inequality for T-product Tensors intro}
Given two Hermitian T-product tensors $\mathcal{C}, \mathcal{D} \in \mathbb{C}^{m \times m \times p}$, we have 
\begin{eqnarray}
\mathrm{Tr} \left( \exp(\mathcal{C} + \mathcal{D}) \right) \leq \mathrm{Tr} \left( \exp \left( \mathcal{C} \right)  \star \exp \left( \mathcal{D} \right) \right),
\end{eqnarray}
where $\star$ is the product operation between two T-product tensors defined by Eq.~\eqref{eq:T-product def}.
\end{restatable}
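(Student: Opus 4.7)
\medskip

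\noindent\textbf{Proof proposal.} The plan is to reduce the T-product tensor inequality to the classical matrix Golden--Thompson inequality $\mathrm{tr}(e^{A+B}) \le \mathrm{tr}(e^A e^B)$ by passing to the Fourier domain along the third mode, where the T-product becomes block-diagonal matrix multiplication.

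First, I would set up the standard block-diagonalization of T-product tensors. Let $\widehat{\mathcal{C}}$ denote the tensor obtained by applying the discrete Fourier transform (DFT) along mode-3 to $\mathcal{C}$, and let $\widehat{C}^{(i)} \in \mathbb{C}^{m \times m}$ for $i = 1,\dots,p$ be its frontal slices. The defining identity of the T-product (Eq.~\eqref{eq:T-product def}) gives $\widehat{\mathcal{A} \star \mathcal{B}}^{(i)} = \widehat{A}^{(i)} \widehat{B}^{(i)}$, so $\star$-powers correspond to ordinary matrix powers slice by slice, and therefore the tensor exponential $\exp(\mathcal{C}) := \sum_{k \ge 0} \mathcal{C}^{\star k}/k!$ satisfies $\widehat{\exp(\mathcal{C})}^{(i)} = \exp\bigl(\widehat{C}^{(i)}\bigr)$, where the right-hand side is the usual matrix exponential.

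Next, I would record the two bookkeeping facts needed to translate between the two sides. (a)~The Hermitian condition on $\mathcal{C}$ (via Eq.~\eqref{eq:Hermitian Transpose Def}) is equivalent to each $\widehat{C}^{(i)}$ being a Hermitian matrix; this follows because the T-product Hermitian transpose combines conjugate transposition of each frontal slice with a cyclic permutation along mode-3 that becomes complex conjugation under the DFT. (b)~The T-product trace, defined as the sum of f-diagonal entries, admits the Fourier-domain representation
\begin{equation*}
\mathrm{Tr}(\mathcal{C}) \;=\; \frac{1}{p}\sum_{i=1}^{p} \mathrm{tr}\bigl(\widehat{C}^{(i)}\bigr),
\end{equation*}
since the first frontal slice is the inverse-DFT average $C^{(1)} = \frac{1}{p}\sum_i \widehat{C}^{(i)}$, and $\mathrm{Tr}(\mathcal{C})$ is the ordinary trace of that slice.

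With these ingredients in place, the last step is immediate: for each $i$, the slices $\widehat{C}^{(i)}$ and $\widehat{D}^{(i)}$ are Hermitian matrices, so the classical matrix Golden--Thompson inequality yields
\begin{equation*}
\mathrm{tr}\bigl(\exp(\widehat{C}^{(i)} + \widehat{D}^{(i)})\bigr) \;\le\; \mathrm{tr}\bigl(\exp(\widehat{C}^{(i)})\,\exp(\widehat{D}^{(i)})\bigr).
\end{equation*}
Summing over $i$, dividing by $p$, and translating both sides back via the identities in the previous paragraph (noting that $\widehat{\mathcal{C}+\mathcal{D}}^{(i)} = \widehat{C}^{(i)} + \widehat{D}^{(i)}$ and that $\star$ corresponds to slice-wise matrix multiplication) produces the desired inequality.

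The main obstacle I anticipate is not the analytical content but the careful bookkeeping of the three operations under the mode-3 DFT: verifying that $\exp$, $\star$, the Hermitian involution, and the trace all commute correctly with the Fourier transform, so that the slice-wise classical inequality aggregates into a genuine T-product tensor statement. Once these compatibilities are nailed down, the Golden--Thompson inequality for T-product tensors follows as a direct corollary of its matrix counterpart applied $p$ times.
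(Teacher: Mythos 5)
Your route is genuinely different from the paper's. The paper proves this theorem by the spectral pinching method: it introduces the pinching map $\mathfrak{P}_{\mathcal{C}}(\mathcal{X}) = \sum_{\lambda}\mathcal{P}_{\lambda}\star\mathcal{X}\star\mathcal{P}_{\lambda}$, establishes its commutation, trace-preservation, and lower-bound properties, and then runs an asymptotic tensorization argument on $n$-fold Kronecker powers, using the polynomial growth of the number of distinct eigenvalues and letting $n\rightarrow\infty$. Your block-diagonalization of $\mbox{bcirc}(\cdot)$ by the mode-3 DFT reduces everything to the classical matrix Golden--Thompson inequality with no limiting argument; it is shorter and more transparent, though it leans on the specific circulant structure of the T-product, whereas the pinching argument is the one that would survive in settings without such a Fourier diagonalization.

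That said, your bookkeeping item (b) is wrong as stated, and it is the one place the argument needs repair. The trace in Eq.~\eqref{eq:trace def} is $\mathrm{Tr}(\mathcal{C})=\sum_{i=1}^{m}\sum_{k=1}^{p}c_{iik}=\sum_{k=1}^{p}\mathrm{tr}\bigl(\mathbf{C}^{(k)}\bigr)$, the sum of the ordinary traces of \emph{all} $p$ frontal slices, not the trace of the first frontal slice alone. Hence $\mathrm{Tr}(\mathcal{C})$ equals $\mathrm{tr}\bigl(\widehat{C}^{(1)}\bigr)$, the trace of the single zero-frequency Fourier block $\widehat{C}^{(1)}=\sum_{k}\mathbf{C}^{(k)}$, whereas your formula $\tfrac{1}{p}\sum_{i}\mathrm{tr}\bigl(\widehat{C}^{(i)}\bigr)$ evaluates to $\mathrm{tr}\bigl(\mathbf{C}^{(1)}\bigr)$, a different functional; summing the slice-wise inequalities and dividing by $p$ therefore bounds the wrong quantity. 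The fix is harmless and actually simplifies your proof: with the correct identity you need only one application of the matrix Golden--Thompson inequality, to the Hermitian pair $\bigl(\widehat{C}^{(1)},\widehat{D}^{(1)}\bigr)$, combined with $\widehat{(\mathcal{A}\star\mathcal{B})}^{(1)}=\widehat{A}^{(1)}\widehat{B}^{(1)}$ and $\widehat{\exp(\mathcal{C})}^{(1)}=\exp\bigl(\widehat{C}^{(1)}\bigr)$. Everything else in your outline --- slice-wise multiplicativity of $\star$ under the DFT, Hermitian slices from Eq.~\eqref{eq:Hermitian Transpose Def}, and slice-wise exponentials --- is correct.
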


The next inequality we will show is Jensen's operator inequality (positive semidefinite relation between two T-product tensors).  A tensor with T-positive definite (or T-positive semi-definite) will be abbreviated as TPD (or TPSD), see Section~\ref{subsec:T-Symmetric Positive Semidefinite Tensors} for its definition. If we have a TPSD relation between two T-product tensors $\mathcal{C}$ and $\mathcal{D}$ represented as $\mathcal{C} \preceq \mathcal{D}$, then the difference T-product tensor $\left(\mathcal{D} - \mathcal{C}\right)$ is a T-positive semi-definite tensor. Let  $\mathcal{I}_{mmp} \in \mathbb{C}^{m \times m \times p}$ be the identity tensor defined by Eq.~\eqref{eq:I_mmp def}.
\begin{restatable}[Jensen's T-product Inequality]
{thm}{ThmJensenTProductInequality}\label{thm:Jensen's T-product Inequality intro}
For a continuous \emph{T-product tensor convex} function $f$ defined on an interval $\mathrm{I}$. The definition of \emph{T-product tensor convex} is given by Eq.~\eqref{eq:T tensor convex}. we have the following TPSD relation for each natural number $n$:
\begin{eqnarray}\label{eq1:thm:Jensen's T-product Inequality intro}
f \left( \sum\limits_{i=1}^{n} \mathcal{C}_i^{\mathrm{H}} \star \mathcal{X}_i \star \mathcal{C}_i \right) \preceq 
\sum\limits_{i=1}^{n} \mathcal{C}_i^{\mathrm{H}} \star f\left(   \mathcal{X}_i \right)\star  \mathcal{C}_i,
\end{eqnarray}
where $\mathcal{X}_i \in \mathbb{C}^{m \times m \times p}$ are bounded, Hermitian T-product tensors with all eigenvalues in the interval $\mathrm{I}$ and tensors $\mathcal{C}_i$ satisfying $\sum\limits_{i=1}^{n} \mathcal{C}_i^{\mathrm{H}} \star  \mathcal{C}_i  = \mathcal{I}_{mmp}$.
\end{restatable}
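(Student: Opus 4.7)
The plan is to adapt the classical Hansen--Pedersen dilation argument to the T-product setting, reducing the multi-term inequality~\eqref{eq1:thm:Jensen's T-product Inequality intro} to a single T-isometry case and then resolving that case via a $2 \times 2$ block ``flip'' trick together with the T-product tensor convexity assumed in Eq.~\eqref{eq:T tensor convex}.

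First, I would stack the $\mathcal{C}_i$'s into a block-column tensor
$$\mathcal{U} = \begin{pmatrix} \mathcal{C}_1 \\ \vdots \\ \mathcal{C}_n \end{pmatrix} \in \mathbb{C}^{nm \times m \times p},$$
which is a T-isometry since $\mathcal{U}^{\mathrm{H}} \star \mathcal{U} = \sum_i \mathcal{C}_i^{\mathrm{H}} \star \mathcal{C}_i = \mathcal{I}_{mmp}$, and form the block-diagonal Hermitian tensor $\mathcal{D}$ whose diagonal blocks are $\mathcal{X}_1, \ldots, \mathcal{X}_n$. Applying the T-spectral decomposition blockwise shows that the eigenvalues of $\mathcal{D}$ all lie in $\mathrm{I}$ and that functional calculus respects the block-diagonal structure, so $f(\mathcal{D})$ is block-diagonal with blocks $f(\mathcal{X}_i)$. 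A direct block computation then identifies
$$\mathcal{U}^{\mathrm{H}} \star \mathcal{D} \star \mathcal{U} = \sum_{i=1}^n \mathcal{C}_i^{\mathrm{H}} \star \mathcal{X}_i \star \mathcal{C}_i, \qquad \mathcal{U}^{\mathrm{H}} \star f(\mathcal{D}) \star \mathcal{U} = \sum_{i=1}^n \mathcal{C}_i^{\mathrm{H}} \star f(\mathcal{X}_i) \star \mathcal{C}_i,$$
so the theorem reduces to the isometric base case $f(\mathcal{U}^{\mathrm{H}} \star \mathcal{D} \star \mathcal{U}) \preceq \mathcal{U}^{\mathrm{H}} \star f(\mathcal{D}) \star \mathcal{U}$.

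Second, to establish this base case I would complete $\mathcal{U}$ to a T-unitary $\mathcal{V} = (\mathcal{U} \mid \mathcal{U}')$ of size $nm \times nm \times p$ via a T-QR / T-Gram--Schmidt construction on the orthogonal complement, and write
$$\mathcal{M} := \mathcal{V}^{\mathrm{H}} \star \mathcal{D} \star \mathcal{V} = \begin{pmatrix} \mathcal{A} & \mathcal{B} \\ \mathcal{B}^{\mathrm{H}} & \mathcal{E} \end{pmatrix}, \qquad \mathcal{A} = \mathcal{U}^{\mathrm{H}} \star \mathcal{D} \star \mathcal{U}.$$
The T-unitary $\mathcal{W} = \mathrm{diag}(\mathcal{I}_{mmp}, -\mathcal{I}_{mmp})$ flips the sign of the off-diagonal blocks of $\mathcal{M}$, so $\tfrac{1}{2}(\mathcal{M} + \mathcal{W}^{\mathrm{H}} \star \mathcal{M} \star \mathcal{W}) = \mathrm{diag}(\mathcal{A}, \mathcal{E})$. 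Applying T-product tensor convexity to this convex combination gives
$$f(\mathrm{diag}(\mathcal{A}, \mathcal{E})) \preceq \tfrac{1}{2}\bigl( f(\mathcal{M}) + f(\mathcal{W}^{\mathrm{H}} \star \mathcal{M} \star \mathcal{W})\bigr).$$
Using T-unitary invariance of functional calculus, $f(\mathcal{V}^{\mathrm{H}} \star \mathcal{D} \star \mathcal{V}) = \mathcal{V}^{\mathrm{H}} \star f(\mathcal{D}) \star \mathcal{V}$ and $f(\mathcal{W}^{\mathrm{H}} \star \mathcal{M} \star \mathcal{W}) = \mathcal{W}^{\mathrm{H}} \star f(\mathcal{M}) \star \mathcal{W}$, the top-left $m \times m \times p$ block of the left-hand side is $f(\mathcal{A})$, while the top-left block of the right-hand side equals $\mathcal{U}^{\mathrm{H}} \star f(\mathcal{D}) \star \mathcal{U}$. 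Since compression to the top-left block preserves the TPSD order $\preceq$ (it is conjugation by a T-isometry), this delivers exactly the base case $f(\mathcal{A}) \preceq \mathcal{U}^{\mathrm{H}} \star f(\mathcal{D}) \star \mathcal{U}$.

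I expect the main obstacle to be the two supporting structural facts that must be made precise in the T-product setting: (i) the existence of a T-unitary completion of an arbitrary T-isometry, which is the tensor analogue of extending an orthonormal set to an orthonormal basis and should follow from the T-SVD applied to $\mathcal{U}$ together with a T-Gram--Schmidt construction on the complementary T-range; and (ii) the T-unitary invariance of functional calculus, $f(\mathcal{V}^{\mathrm{H}} \star \mathcal{D} \star \mathcal{V}) = \mathcal{V}^{\mathrm{H}} \star f(\mathcal{D}) \star \mathcal{V}$, which should follow by expanding $f$ through the T-spectral decomposition of $\mathcal{D}$ and invoking $\mathcal{V}^{\mathrm{H}} \star \mathcal{V} = \mathcal{I}_{mmp}$. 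Once these two are in hand, the remaining pieces --- block multiplication, the flip/averaging identity, block-diagonal functional calculus, and TPSD-preservation under compression --- are routine consequences of the T-product framework developed earlier in the paper.
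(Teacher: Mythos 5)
Your proposal is correct, but it follows a genuinely different route from the paper. The paper's proof is the discrete-Fourier (roots-of-unity) pinching argument: it embeds the $\mathcal{C}_i$ as one block column of a unitary $\mathcal{U} \in \mathbb{C}^{mn\times mn\times p}$, introduces the diagonal unitary $\mathcal{D}=\mathrm{diag}(\theta,\theta^2,\dots,\theta^{n-1},\mathcal{I}_{mmp})$ with $\theta = e^{2\pi\sqrt{-1}/n}\,\mathcal{I}_{mmp}$, and uses the identity $\frac{1}{n}\sum_{k=1}^{n}\mathcal{D}^{-k}\oast\mathcal{C}\oast\mathcal{D}^{k}=\mathrm{diag}(c_{1,1},\dots,c_{n,n})$ (Lemma~\ref{lma:3.1 Jensen Operator Inequality Frank}) to realize the block-diagonal pinching as an average of $n$ unitary conjugations, to which $n$-point T-product convexity is applied before reading off the $(n,n)$ block. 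Your argument instead stacks the $\mathcal{C}_i$ into a single T-isometry, reduces to the isometric case, and resolves it with the $2\times 2$ flip unitary $\mathrm{diag}(\mathcal{I},-\mathcal{I})$ --- the Hansen--Pedersen dilation proof. What your route buys: you only ever invoke the two-point convexity literally stated in Eq.~\eqref{eq:T tensor convex} (the paper tacitly uses the $n$-point version, which requires an unstated induction), and the single flip average is cleaner than the $n$-fold Fourier average. What it costs: you must supply the T-unitary completion of a T-isometry and the T-unitary invariance of the functional calculus; both are genuine obligations, but both are also implicitly needed by the paper's own construction (its $\mathcal{U}$ must be completed to a unitary from its prescribed block column, and its step $f(\mathcal{D}^{-i}\oast\cdot\oast\mathcal{D}^{i})=\mathcal{D}^{-i}\oast f(\cdot)\oast\mathcal{D}^{i}$ is exactly unitary invariance), and both follow from passing through $\mathrm{bcirc}$ and the Fourier-domain block-diagonalization of block-circulant matrices, so neither is an obstruction. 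The remaining structural facts you list (block functional calculus, order preservation under compression by a T-isometry) are indeed routine in the framework of Sections~\ref{subsec:TSVD} and~\ref{subsec:T-Symmetric Positive Semidefinite Tensors}.
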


The immediate application of Theorem~\ref{thm:convexity and monotonicity of a trace func intro} is to prove Klein's inequality for T-product tensor. 
\begin{restatable}[Klein's T-product Inequality]
{thm}{ThmKleinsTProductInequality}
\label{thm:Klein's Ineqaulity T-product intro}
For all $\mathcal{C}, \mathcal{D}$ Hermitian T-product tensors and a differentiable convex function $f: \mathbb{R} \rightarrow \mathbb{R}$ or for all $\mathcal{C}, \mathcal{D}$ Hermitian T-product tensors and a differentiable convex function $f: (0, \infty) \rightarrow \mathbb{R}$, we have
\begin{eqnarray}
\mathrm{Tr}\left( f(\mathcal{C}) -  f(\mathcal{D}) - (\mathcal{C} - \mathcal{D}) \star f'(\mathcal{D}) \right) \geq 0. 
\end{eqnarray}
In both situations, if $f$ is strictly convex, equality holds if and only if $\mathcal{C} = \mathcal{D}$. 
\end{restatable}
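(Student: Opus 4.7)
The plan is to derive Klein's T-product inequality as a short consequence of Theorem~\ref{thm:convexity and monotonicity of a trace func intro} (convexity of the trace function) via the classical ``tangent-line from convexity'' argument. Fix Hermitian T-product tensors $\mathcal{C}$ and $\mathcal{D}$ whose spectra lie in the domain of $f$, and consider the scalar function
\begin{equation*}
h(t) \define \mathrm{Tr}\bigl( f\bigl( \mathcal{D} + t(\mathcal{C} - \mathcal{D}) \bigr) \bigr), \qquad t \in [0,1].
\end{equation*}
The map $t \mapsto \mathcal{D} + t(\mathcal{C} - \mathcal{D})$ is affine with Hermitian values, so Theorem~\ref{thm:convexity and monotonicity of a trace func intro} gives that $h$ is convex; if $f$ is strictly convex and $\mathcal{C} \neq \mathcal{D}$, this affine map is non-constant and $h$ is strictly convex. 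In the $(0,\infty)$ case one observes additionally that the TPD cone is preserved under convex combinations, so $f$ is well-defined all along the segment. Convexity of $h$ on $[0,1]$ then yields the support inequality $h(1) - h(0) \geq h'(0^+)$, i.e.,
\begin{equation*}
\mathrm{Tr}(f(\mathcal{C})) - \mathrm{Tr}(f(\mathcal{D})) \geq h'(0^+).
\end{equation*}

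The heart of the proof is then to show
\begin{equation*}
h'(0) = \mathrm{Tr}\bigl( (\mathcal{C} - \mathcal{D}) \star f'(\mathcal{D}) \bigr),
\end{equation*}
the T-product analogue of the classical matrix identity $\tfrac{d}{dt}\mathrm{tr}(f(A + tB))\big|_{t=0} = \mathrm{tr}(f'(A) B)$. This is the main obstacle. The cleanest route is through the discrete Fourier transform along the tubal mode: under the DFT, the T-product $\star$ becomes block-diagonal matrix multiplication, tensor functions $f(\cdot)$ act block-wise on the frontal slices in the Fourier domain, and the scalar trace $\mathrm{Tr}$ decomposes as a normalized sum of ordinary matrix traces over those blocks. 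The classical Daleckii--Krein trace-derivative formula then applies to each Hermitian-matrix block, and summing over blocks recovers the desired identity. The cyclicity of $\mathrm{Tr}$ under $\star$---which itself follows from the same Fourier-block representation---lets us write the result in the stated order $(\mathcal{C} - \mathcal{D}) \star f'(\mathcal{D})$ rather than $f'(\mathcal{D}) \star (\mathcal{C} - \mathcal{D})$.

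Combining the support inequality with the derivative identity gives the desired bound $\mathrm{Tr}\bigl(f(\mathcal{C}) - f(\mathcal{D}) - (\mathcal{C} - \mathcal{D}) \star f'(\mathcal{D})\bigr) \geq 0$. For the equality case, when $f$ is strictly convex and $\mathcal{C} \neq \mathcal{D}$, strict convexity of $h$ on $[0,1]$ upgrades the support inequality to $h(1) - h(0) > h'(0^+)$, so the trace is strictly positive; conversely, $\mathcal{C} = \mathcal{D}$ trivially yields equality. This resolves both the $\mathbb{R}$ and $(0,\infty)$ situations in a unified manner, with the only nontrivial step being the transfer of the classical trace-derivative formula to the T-product framework.
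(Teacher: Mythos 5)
Your proposal follows essentially the same route as the paper: both define $F(t)=\mathrm{Tr}\left(f\left(\mathcal{D}+t(\mathcal{C}-\mathcal{D})\right)\right)$, invoke Theorem~\ref{thm:convexity and monotonicity of a trace func intro} to get convexity of $F$, and conclude from the support inequality $F(1)-F(0)\geq F'(0^+)$. You are in fact more complete than the paper's own argument, which neither justifies the derivative identity $F'(0)=\mathrm{Tr}\left((\mathcal{C}-\mathcal{D})\star f'(\mathcal{D})\right)$ (it is dismissed as ``rearrangement and substitution'') nor addresses the equality case for strictly convex $f$; your DFT/block-wise Daleckii--Krein treatment of the former and your strict-convexity argument for the latter fill exactly those gaps.
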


Previous theorems will help us to establish the following main theorem of this paper about Lieb's concavity for T-product tensors since tail bounds for sums of random T-product tensors will be derived based on such concavity relation.

\begin{restatable}[Lieb's concavity theorem for T-product tensors]
{thm}{ThmLiebConcavityTProduct}\label{thm:Lieb concavity thm}
Let $\mathcal{H}$ be a Hermitian T-product tensor. Following map 
\begin{eqnarray}\label{eq:thm:Lieb concavity thm}
\mathcal{A} \rightarrow \mathrm{Tr}e^{\mathcal{H}+ \log \mathcal{A}}
\end{eqnarray}
is concave on the positive-definite cone.
\end{restatable}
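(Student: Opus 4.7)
The plan is to mirror a classical Ando--Lieb derivation inside the T-product calculus, using the inequalities already established in Theorems~\ref{thm:convexity and monotonicity of a trace func intro}--\ref{thm:Klein's Ineqaulity T-product intro}. The strategy is to isolate a joint concavity statement for a ``power-product'' trace functional and then recover the exponential/logarithm form by a Lie--Trotter limit.

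First I would establish a T-product Lie--Trotter identity,
\begin{equation*}
e^{\mathcal{H} + \log \mathcal{A}} \;=\; \lim_{n \to \infty} \bigl(e^{\mathcal{H}/n} \star \mathcal{A}^{1/n}\bigr)^{n},
\end{equation*}
valid on the TPD cone. The proof is a Taylor-expansion argument that transfers from the matrix case once continuity of $\star$ and of the exp/log functional calculus on TPD tensors is noted. Since a pointwise limit of concave functions is concave, it then suffices to show that, for each fixed $n$, the functional $\mathcal{A} \mapsto \mathrm{Tr}\bigl[(e^{\mathcal{H}/n} \star \mathcal{A}^{1/n})^{n}\bigr]$ is concave on the TPD cone.

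Next I would prove an Ando-type joint concavity statement: for $s \in [0,1]$ and any fixed $\mathcal{K}$, the map
\begin{equation*}
(\mathcal{A}, \mathcal{B}) \;\longmapsto\; \mathrm{Tr}\bigl(\mathcal{K}^{\mathrm{H}} \star \mathcal{A}^{s} \star \mathcal{K} \star \mathcal{B}^{1-s}\bigr)
\end{equation*}
is jointly concave on TPD $\times$ TPD. The argument is to apply Jensen's T-product inequality (Theorem~\ref{thm:Jensen's T-product Inequality intro}) to the operator-concave power function $t \mapsto t^{s}$, together with a block-tensor trick that converts a midpoint average of $(\mathcal{A}_{0},\mathcal{B}_{0})$ and $(\mathcal{A}_{1},\mathcal{B}_{1})$ into the single-tensor quantity controlled by Jensen. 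Iterating this inequality with $s = 1/n$ and $\mathcal{K} = e^{\mathcal{H}/n}$ yields concavity of $\mathcal{A} \mapsto \mathrm{Tr}\bigl[(e^{\mathcal{H}/n} \star \mathcal{A}^{1/n})^{n}\bigr]$ by induction on $n$, after which the Lie--Trotter limit in the previous paragraph closes the argument.

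The main obstacle is the Ando-type step: establishing operator concavity of $t \mapsto t^{s}$ for $s \in (0,1)$ in the T-product sense, and verifying that the block-tensor direct-sum reduction respects the third-dimension structure of the $\star$-product. If that route becomes too technical, a conceptually cleaner fallback is to invoke the slice-wise Fourier isomorphism between T-product tensors and $p$-tuples of ordinary matrices: under this isomorphism $\mathrm{Tr}\, e^{\mathcal{H} + \log \mathcal{A}}$ decomposes as a weighted sum $\sum_{k} \mathrm{tr}\exp(\widehat{H}_{k} + \log \widehat{A}_{k})$ over frequency slices, each summand is concave in $\widehat{A}_{k}$ by the classical matrix Lieb concavity theorem, and linearity of the Fourier transform preserves concavity on the TPD cone.
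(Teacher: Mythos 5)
Your primary route (Lie--Trotter plus an Ando-type joint concavity, closed by induction) is not the paper's argument, and as written it has two genuine gaps. First, the Ando step --- joint concavity of $(\mathcal{A},\mathcal{B})\mapsto \mathrm{Tr}\bigl(\mathcal{K}^{\mathrm{H}}\star\mathcal{A}^{s}\star\mathcal{K}\star\mathcal{B}^{1-s}\bigr)$ --- is essentially the Wigner--Yanase--Dyson--Lieb concavity theorem itself; it does not follow from Jensen's T-product inequality (Theorem~\ref{thm:Jensen's T-product Inequality intro}) applied to $t\mapsto t^{s}$ together with a block trick, because Jensen controls $f$ of a contractive average of a \emph{single} tensor variable, not a bilinear functional in two independent power arguments. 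You would need operator concavity of $t\mapsto t^{s}$ \emph{and} an integral-representation or interpolation argument, neither of which is available in the paper. Second, the induction is not an induction: concavity of $\mathcal{A}\mapsto\mathrm{Tr}\bigl[(e^{\mathcal{H}/n}\star\mathcal{A}^{1/n})^{n}\bigr]$ is a multi-factor trace concavity statement that does not follow by ``iterating'' the two-factor case with $s=1/n$; it is a separate (true but nontrivial) theorem. The paper instead proves the result by a variational argument: Klein's inequality (Theorem~\ref{thm:Klein's Ineqaulity T-product intro}) yields $\mathrm{Tr}\,e^{\mathcal{H}+\log\mathcal{A}}=\max_{\mathcal{X}\succ\mathcal{O}}\{\mathrm{Tr}\,\mathcal{X}\star\mathcal{H}-\mathbb{D}(\mathcal{X}\,\|\,\mathcal{A})+\mathrm{Tr}\,\mathcal{X}\}$, and concavity in $\mathcal{A}$ then follows from the joint convexity of the T-product relative entropy (Lemma~\ref{lma:joint conv of rela entropy}), which is where Jensen's inequality is actually used, via the perspective-function Lemma~\ref{lma:perspective func}.

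Your fallback, by contrast, is sound and is arguably the cleanest route in this setting: $\mathrm{bcirc}$ intertwines $\star$ with ordinary matrix multiplication and is block-diagonalized by the discrete Fourier transform, the tensor $\exp$ and $\log$ act slice-wise on the Fourier blocks, and $\mathcal{A}\mapsto\widehat{A}_{k}$ is linear, so the classical matrix Lieb theorem applies blockwise and concavity is preserved. One correction: with the paper's definition $\mathrm{Tr}(\mathcal{C})=\sum_{i,k}c_{iik}=\mathrm{tr}\bigl(\sum_{k}\mathbf{C}^{(k)}\bigr)$, the functional is not a weighted sum over all frequency slices but exactly $\mathrm{tr}\exp\bigl(\widehat{H}_{0}+\log\widehat{A}_{0}\bigr)$ evaluated at the single zero-frequency slice; this only simplifies your argument. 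If you pursue this version, state explicitly that $\mathcal{A}\succ\mathcal{O}$ forces each Fourier block $\widehat{A}_{k}$ to be positive definite so that $\log\widehat{A}_{0}$ is well defined.
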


We are ready to present the theorem for the tail bound of the maximum eigenvalue induced by independent sums of random Hermitian T-product tensors and this theorem 
will play a key role to establish various new tail bounds of the maximum eigenvalue generated by independent sums of random T-product tensors. 
\begin{restatable}[Master Tail Bound for Independent Sum of Random T-product Tensors for Eigenvalue]{thm}{ThmMasterTailBoundEigenvalue}\label{thm:Master Tail Bound for Independent Sum of Random Tensors}
Given a finite sequence of independent Hermitian T-product tensors $\{ \mathcal{X}_i \}$, we have 
\begin{eqnarray}\label{eq1:thm:master tail bound}
\mathrm{Pr} \left( \lambda_{\max} \left(\sum\limits_{i=1}^n \mathcal{X}_i \right) \geq \theta \right)
& \leq & \inf\limits_{t > 0} \Big\{ e^{- t \theta} \mathrm{Tr}\exp \left( \sum\limits_{i=1}^{n} \log \mathbb{E} e^{t \mathcal{X}_i}  \right) \Big\}. 
\end{eqnarray}
\end{restatable}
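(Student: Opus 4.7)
The plan is to mimic the classical Laplace-transform master tail bound argument (in the spirit of Ahlswede--Winter and Tropp), with Lieb's concavity for T-product tensors (Theorem~\ref{thm:Lieb concavity thm}) playing the role of its matrix counterpart. I would proceed in four steps.

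\textbf{Step 1 (Markov + spectral mapping).} For any $t>0$, monotonicity of $x\mapsto e^{tx}$ combined with Markov's inequality gives
\begin{equation*}
\mathrm{Pr}\!\left(\lambda_{\max}\!\left(\textstyle\sum_i \mathcal{X}_i\right)\geq\theta\right)
=\mathrm{Pr}\!\left(e^{t\lambda_{\max}(\sum_i \mathcal{X}_i)}\geq e^{t\theta}\right)
\leq e^{-t\theta}\,\mathbb{E}\, e^{t\lambda_{\max}(\sum_i \mathcal{X}_i)}.
\end{equation*}
Using the spectral mapping property for the T-product tensor exponential (a consequence of the f-diagonalization available in the T-product framework) and the positivity of $e^{t\mathcal{Y}}$ for Hermitian $\mathcal{Y}$, one has $e^{t\lambda_{\max}(\mathcal{Y})}=\lambda_{\max}(e^{t\mathcal{Y}})\leq \mathrm{Tr}\,e^{t\mathcal{Y}}$, because the sum of f-diagonal entries of a Hermitian TPSD tensor dominates its largest eigenvalue. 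Thus
\begin{equation*}
\mathrm{Pr}\!\left(\lambda_{\max}\!\left(\textstyle\sum_i \mathcal{X}_i\right)\geq\theta\right)
\leq e^{-t\theta}\,\mathbb{E}\,\mathrm{Tr}\exp\!\left(t\textstyle\sum_{i=1}^n \mathcal{X}_i\right).
\end{equation*}

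\textbf{Step 2 (peeling off one summand via Lieb).} Fix the partial sum $\mathcal{H}_{k-1}\define t\sum_{i=1}^{k-1}\mathcal{X}_i$. Conditioning on $\mathcal{X}_1,\dots,\mathcal{X}_{k-1}$ and writing $t\mathcal{X}_k=\log e^{t\mathcal{X}_k}$ on the TPD cone, Theorem~\ref{thm:Lieb concavity thm} tells us that the map $\mathcal{A}\mapsto\mathrm{Tr}\exp(\mathcal{H}_{k-1}+\log\mathcal{A})$ is concave in $\mathcal{A}$. Applying Jensen's inequality (for scalar-valued concave functions of a random T-product tensor) to $\mathcal{A}=e^{t\mathcal{X}_k}$ yields
\begin{equation*}
\mathbb{E}_{\mathcal{X}_k}\,\mathrm{Tr}\exp\!\left(\mathcal{H}_{k-1}+t\mathcal{X}_k\right)
\leq \mathrm{Tr}\exp\!\left(\mathcal{H}_{k-1}+\log\mathbb{E}\,e^{t\mathcal{X}_k}\right).
\end{equation*}

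\textbf{Step 3 (induction).} By independence of $\{\mathcal{X}_i\}$, I iterate Step 2 from $k=n$ down to $k=1$, at each stage absorbing $\log\mathbb{E}\,e^{t\mathcal{X}_k}$ into the ``constant'' Hermitian tensor. After $n$ passes,
\begin{equation*}
\mathbb{E}\,\mathrm{Tr}\exp\!\left(t\textstyle\sum_{i=1}^n \mathcal{X}_i\right)\leq \mathrm{Tr}\exp\!\left(\textstyle\sum_{i=1}^n\log\mathbb{E}\,e^{t\mathcal{X}_i}\right).
\end{equation*}
Combining with Step 1 and taking the infimum over $t>0$ produces the claimed bound.

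\textbf{Main obstacles.} The nontrivial points are (i) justifying the spectral-mapping/majorization fact $e^{t\lambda_{\max}(\mathcal{Y})}\leq\mathrm{Tr}\,e^{t\mathcal{Y}}$ in the T-product setting, which requires a careful appeal to the T-SVD/eigendecomposition and the trace definition used here, and (ii) ensuring that Jensen's inequality applies to the concave functional of Theorem~\ref{thm:Lieb concavity thm} with a \emph{random} argument $e^{t\mathcal{X}_k}$ that is TPD almost surely---an integrability/measurability remark is needed, analogous to the matrix case. The induction itself is routine once these two pieces are in place.
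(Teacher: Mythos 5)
Your proposal is correct and follows essentially the same route as the paper: the paper packages your Step 1 as Lemma~\ref{lma: Laplace Transform Method Eigenvalue Version} (Markov plus spectral mapping plus trace domination) and your Steps 2--3 as Lemma~\ref{lma:subadditivity of tensor cgfs}, whose proof is exactly the iterated application of Lieb's concavity (via Corollary~\ref{cor:3.3}) with conditional expectations. The two obstacles you flag are indeed the points the paper handles through the spectral mapping Lemma~\ref{lma:spectral mapping} and the Jensen step in Corollary~\ref{cor:3.3}.
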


Similarly, we can generalize master tail bound for independent sum of random Hermitian T-product tensors with respect to eigenvalue from Theorem~\ref{thm:Master Tail Bound for Independent Sum of Random Tensors} to eigentuple version by the following theorem~\ref{thm:master tail bound eigentuple}. We begin with $\bigodot$ operation defined in Proposition 2.1 from work~\cite{qi2021tquadratic}. 

Let $\mathbf{a}=(a_1,a_2,\cdots,a_p)^{\mathrm{T}} \in \mathbb{C}^p$, then operator $\mbox{circ}$ to the vector $\mathbf{a}$ can be defined as
\begin{eqnarray}
\mbox{circ}(\mathbf{a}) \define 
  \left( \begin{array}{ccccc}
       a_1  &  a_p  & a_{p-1} & \cdots  &  a_2 \\
       a_2  &  a_1  & a_{p} & \cdots  &  a_3 \\
       \vdots  &  \vdots  & \vdots  & \cdots  & \vdots   \\
       a_p  &  a_{p-1}  & a_{p-2} & \cdots  &  a_1 \\
    \end{array}
\right),
\end{eqnarray}
and $\mbox{circ}^{-1}( \mbox{circ} ( \mathbf{a}))  \define \mathbf{a}$. Suppose that $\mathbf{a}, \mathbf{b} \in \mathbb{C}^p$, we define 
\begin{eqnarray}
\mathbf{a} \bigodot \mathbf{b} \define \mbox{circ}(\mathbf{a})\cdot \mathbf{b},
\end{eqnarray}
where $\cdot$ is the standard matrix and vector multiplication. Then, we are ready to present the following theorem.
\begin{restatable}[Master Tail Bound for Independent Sum of Random T-product Tensors for Eigentuple]{thm}{ThmMasterTailBoundEigentuple}\label{thm:master tail bound eigentuple}
Given a finite sequence of independent random Hermitian T-product tensors $\{ \mathcal{X}_i \}$ such that $ \mathcal{X}_i \in \mathbb{C}^{m \times m \times p}$, if $\sum\limits_{i=1}^n t \mathcal{X}_i $ satisfies Eq.~\eqref{eq1:lma: Laplace Transform Method Eigentuple Version}, we have 
\begin{eqnarray}\label{eq1:thm:master tail bound eigentuple}
\mathrm{Pr} \left( \mathbf{d}_{\max} \left( \sum\limits_{i=1}^n \mathcal{X}_i \right) \geq \mathbf{b} \right)
& \leq & \inf\limits_{t > 0} \min\limits_{1 \leq j \leq p} \left\{ \frac{ \mathrm{Tr} \exp\left( \sum\limits_{i=1}^n \log \mathbb{E}e^{t \mathcal{X}_i }  \right)     }{ \left( e_{\bigodot}^{t \mathbf{b} }\right)_j} \right\},
\end{eqnarray}
where $e_{\bigodot}^{t \mathbf{b}} \in \mathbb{C}^{p}$ is the exponential for the vector $t \mathbf{b}$ with respect to $\bigodot$ operation.
\end{restatable}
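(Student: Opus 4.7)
The plan is to follow the Laplace-transform / Lieb-concavity template used for Theorem~\ref{thm:Master Tail Bound for Independent Sum of Random Tensors}, but adapted to the eigentuple ordering. First I would invoke the eigentuple Laplace transform method of Eq.~\eqref{eq1:lma: Laplace Transform Method Eigentuple Version} to convert the left-hand side into a Markov-type ratio: for every $t > 0$ and every index $1 \leq j \leq p$,
\begin{eqnarray}
\mathrm{Pr}\left( \mathbf{d}_{\max}\left( \sum\limits_{i=1}^n \mathcal{X}_i \right) \geq \mathbf{b} \right) \leq \frac{\mathbb{E}\, \mathrm{Tr} \exp\left( t \sum\limits_{i=1}^n \mathcal{X}_i\right)}{ \left( e_{\bigodot}^{t \mathbf{b}}\right)_j}.
\end{eqnarray}
Since the denominator is a deterministic function of $\mathbf{b}$ and $j$, this step isolates all randomness into the numerator $\mathbb{E}\, \mathrm{Tr}\exp(t\sum_i \mathcal{X}_i)$.

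For the numerator I would iteratively apply the T-product Lieb concavity theorem (Theorem~\ref{thm:Lieb concavity thm}) together with the independence of the $\mathcal{X}_i$. Peel off expectations one at a time, from $k = n$ down to $k = 1$. At step $k$, condition on $\mathcal{X}_1, \ldots, \mathcal{X}_{k-1}$ and set
\begin{eqnarray}
\mathcal{H}_k \define t \sum\limits_{i < k} \mathcal{X}_i + \sum\limits_{i > k} \log \mathbb{E} e^{t \mathcal{X}_i},
\end{eqnarray}
which is then deterministic and independent of $\mathcal{X}_k$. Because the map $\mathcal{A} \mapsto \mathrm{Tr}\exp(\mathcal{H}_k + \log \mathcal{A})$ is concave on the TPD cone, Jensen's inequality applied to the inner expectation gives
\begin{eqnarray}
\mathbb{E}_{\mathcal{X}_k}\, \mathrm{Tr}\exp\left( \mathcal{H}_k + \log e^{t\mathcal{X}_k}\right) \leq \mathrm{Tr}\exp\left( \mathcal{H}_k + \log \mathbb{E}_{\mathcal{X}_k} e^{t \mathcal{X}_k}\right).
\end{eqnarray}
Iterating this peeling and invoking the tower property of conditional expectation yields
\begin{eqnarray}
\mathbb{E}\, \mathrm{Tr}\exp\left( t \sum\limits_{i=1}^n \mathcal{X}_i\right) \leq \mathrm{Tr}\exp\left( \sum\limits_{i=1}^n \log \mathbb{E} e^{t \mathcal{X}_i}\right).
\end{eqnarray}
Substituting this bound into the Markov ratio and then taking the infimum over $t > 0$ followed by the minimum over $j \in \{1, \ldots, p\}$ produces the claimed inequality.

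The step I expect to demand the most care is the eigentuple Laplace-transform lemma itself, \ie justifying Eq.~\eqref{eq1:lma: Laplace Transform Method Eigentuple Version}. In the scalar-eigenvalue setting, the bridge from the event $\{\lambda_{\max}(\mathcal{Y}) \geq \theta\}$ to a scalar inequality amenable to Markov rests on the elementary fact that $e^{t\theta} \leq \mathrm{Tr}\, e^{t\mathcal{Y}}$. The eigentuple analogue must lift the componentwise relation $\mathbf{d}_{\max}(\mathcal{Y}) \geq \mathbf{b}$ through the circulant structure underlying the $\bigodot$ operation, so that every coordinate $(e_{\bigodot}^{t \mathbf{b}})_j$ of the vector-valued exponential serves as an admissible scalar lower bound for $\mathrm{Tr}\, e^{t\mathcal{Y}}$; this is precisely why the final tail bound minimizes over $j$ rather than using a single denominator. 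Once that lemma is in place, the remainder is a routine Tropp-style composition of Lieb concavity and independence as sketched above, and no further tensor machinery beyond what has already been developed in the earlier sections is needed.
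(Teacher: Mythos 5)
Your proposal is correct and follows essentially the same route as the paper: the paper proves this theorem in one line by substituting the subadditivity bound $\mathbb{E}\,\mathrm{Tr}\exp\left(\sum_i t\mathcal{X}_i\right) \leq \mathrm{Tr}\exp\left(\sum_i \log\mathbb{E}e^{t\mathcal{X}_i}\right)$ (its Lemma on tensor cumulant-generating functions, itself proved by exactly the Lieb-concavity peeling argument you sketch) into the eigentuple Laplace transform lemma. The only difference is that you inline the proofs of those two supporting lemmas rather than citing them, and your closing remarks about the delicacy of the eigentuple Laplace-transform step match where the paper in fact places the extra hypothesis.
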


The last important theorem is the Courant-Fischer theorem for T-product tensors. This theorem will be used to figure out the relationship between the maximum eigentuple and the minimum eigentule of a T-product tensor.

\begin{restatable}[Courant-Fischer Theorem under T-product]{thm}{ThmCourantFischerTProduct}\label{thm:Courant-Fischer Theorem under T-product}
Let $\mathcal{A} \in \mathbb{C}^{m \times m \times p}$ be a Hermitian T-product tensor with eigentuples $\mathbf{d}_1 \geq \mathbf{d}_2 \geq \cdots \geq \mathbf{d}_n$. Let $\{\mathbf{U}_{j}^{[l]} \} \in \mathbb{C}^{m \times p }$ be orthnomal matrices for $1 \leq j \leq m$ and $0 \leq l \leq p-1$, $S_k$ be the space spanned by $\{\mathbf{U}_{j}^{[l]} \}$  for $1 \leq j \leq k$ and $0 \leq l \leq p-1$, and $T_k$ be the space spanned by $\{\mathbf{U}_{j}^{[l]} \}$  for $k \leq j \leq m$ and $0 \leq l \leq p-1$. Then, we have 
\begin{eqnarray}
\mathbf{d}_k &=& \max\limits_{\substack{S_k \subseteq \mathbb{C}^{m \times p} \\ \dim(S_k) = k \times p } } \min\limits_{\mathbf{X} \in S_k} \left(\mathbf{X}^{\mathrm{H}} \star \mathcal{A} \star \mathbf{X} \right) \bigg /_{\bigodot} \left( \mathbf{X}^{\mathrm{H}} \star \mathbf{X} \right) \nonumber \\
&=& \min\limits_{\substack{ T_k \subseteq \mathbb{C}^{m \times p} \\ \dim(T_k) = (m - k +1) \times p } } \max\limits_{\mathbf{X} \in T_k} \left(\mathbf{X}^{\mathrm{H}} \star \mathcal{A} \star \mathbf{X} \right) \bigg /_{\bigodot} \left( \mathbf{X}^{\mathrm{H}} \star \mathbf{X} \right),
\end{eqnarray}
where  $\bigg /_{\bigodot}$ is the division (inverse operation) under $\bigodot$. 
\end{restatable}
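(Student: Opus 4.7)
The plan is to lift the entire statement to the Fourier domain along the third axis, reducing the T-product Courant-Fischer theorem to $p$ independent copies of the classical Courant-Fischer identity. Since the circulant matrix $\mbox{circ}(\cdot)$ is diagonalized by the $p$-point DFT matrix $\mathbf{F}_p$, the T-product $\star$ becomes block-diagonal matrix multiplication, the Hermitian transpose becomes the blockwise Hermitian transpose, and $\bigodot$ on $\mathbb{C}^p$ becomes componentwise multiplication (so $\bigg/_{\bigodot}$ becomes componentwise division). Because $\mathcal{A}$ is Hermitian, its Fourier image is $\widehat{\mathcal{A}} = \mathrm{bdiag}(\widehat{\mathcal{A}}^{(1)}, \ldots, \widehat{\mathcal{A}}^{(p)})$ with each $\widehat{\mathcal{A}}^{(j)} \in \mathbb{C}^{m \times m}$ classically Hermitian. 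Following the eigentuple framework of Qi et. al, the $k$-th eigentuple $\mathbf{d}_k$ corresponds (after DFT) to the $p$-vector whose $j$-th component is $\lambda_k(\widehat{\mathcal{A}}^{(j)})$, and the componentwise order $\mathbf{d}_1 \geq \mathbf{d}_2 \geq \cdots$ matches the slicewise eigenvalue ordering.

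Next I would rewrite the generalized Rayleigh quotient in the Fourier domain. Let $\widehat{\mathbf{X}}$ be the DFT of $\mathbf{X} \in \mathbb{C}^{m \times p}$ along its second axis, with lateral slices $\widehat{\mathbf{x}}^{(j)} \in \mathbb{C}^m$. Then the $j$-th component of $(\mathbf{X}^{\mathrm{H}} \star \mathcal{A} \star \mathbf{X}) \bigg/_{\bigodot} (\mathbf{X}^{\mathrm{H}} \star \mathbf{X})$ equals the ordinary Rayleigh quotient $r_j(\widehat{\mathbf{x}}^{(j)}) = (\widehat{\mathbf{x}}^{(j)})^{\mathrm{H}} \widehat{\mathcal{A}}^{(j)} \widehat{\mathbf{x}}^{(j)} / \| \widehat{\mathbf{x}}^{(j)}\|^2$. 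The structural observation that drives the proof is that the subspace $S_k$ spanned by $\{\mathbf{U}_j^{[l]}\}_{1 \leq j \leq k,\, 0 \leq l \leq p-1}$ decomposes under the DFT as a direct sum $\widehat{S}_k = \widehat{S}_k^{(1)} \oplus \cdots \oplus \widehat{S}_k^{(p)}$ with each $\widehat{S}_k^{(j)}$ a $k$-dimensional subspace of $\mathbb{C}^m$. Here the shift index $l$ records the $p$ circular third-axis shifts of the $j$-th T-column, which after DFT act by multiplication with $p$-th roots of unity and populate each Fourier slice with one independent direction per T-column; this accounts exactly for the prescribed dimension $k \times p$.

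Given this decomposition, the classical Courant-Fischer theorem applied to each $\widehat{\mathcal{A}}^{(j)}$ gives
\begin{eqnarray}
\lambda_k(\widehat{\mathcal{A}}^{(j)}) &=& \max_{\dim \widehat{S}^{(j)} = k} \min_{\widehat{\mathbf{x}}^{(j)} \in \widehat{S}^{(j)}} r_j(\widehat{\mathbf{x}}^{(j)}) \nonumber \\
&=& \min_{\dim \widehat{T}^{(j)} = m-k+1} \max_{\widehat{\mathbf{x}}^{(j)} \in \widehat{T}^{(j)}} r_j(\widehat{\mathbf{x}}^{(j)}).
\end{eqnarray}
Because $\bigg/_{\bigodot}$ is componentwise and $S_k$ has the product structure above, the vector-valued extremum over $\mathbf{X} \in S_k$ decouples into $p$ independent scalar extrema in the Fourier domain. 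Stacking the slicewise identities into a $p$-vector and inverting the DFT recovers precisely the two expressions for $\mathbf{d}_k$ claimed in the theorem; the dual min-max form comes from the same argument applied to the family defining $T_k$.

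The main obstacle I anticipate is making the product-subspace correspondence rigorous: I must verify that as $\{\mathbf{U}_j^{[l]}\}$ ranges over all admissible orthonomal families of the prescribed form, the resulting Fourier images $\widehat{S}_k^{(1)} \times \cdots \times \widehat{S}_k^{(p)}$ cover \emph{every} product of $k$-dimensional subspaces of $\mathbb{C}^m$, so that no admissible slicewise optimum is missed. A related subtlety is the meaning of a componentwise extremum for a tube-valued Rayleigh quotient; one has to observe that after DFT the ratio is real in each slice (each $\widehat{\mathcal{A}}^{(j)}$ being Hermitian), so the extrema are unambiguous per slice and can legitimately be packaged back into a single vector-valued identity via the inverse DFT. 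Once these two points are in place, the theorem reduces to applying the classical Courant-Fischer identity $p$ times.
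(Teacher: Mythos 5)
Your route is genuinely different from the paper's. The paper never leaves the T-product domain: it expands an arbitrary $\mathbf{X}\in S_k$ in the eigenmatrix basis $\{\mathbf{U}_j^{[l]}\}$, computes the generalized Rayleigh quotient as a $\bigodot$-weighted average of the eigentuples $\mathbf{d}_j$ to get achievability, and then establishes optimality by the classical dimension-counting step: any $S_k$ with $\dim(S_k)=k\times p$ meets $T_k$ (of dimension $(m-k+1)\times p$) nontrivially, and on $S_k\cap T_k$ the quotient is elementwise at most $\mathbf{d}_k$. Your DFT block-diagonalization is the standard alternative in the T-product literature and it makes the achievability direction transparent: a subspace of the prescribed form is exactly a product $\widehat{S}_k^{(1)}\oplus\cdots\oplus\widehat{S}_k^{(p)}$ of $k$-dimensional slicewise subspaces, the quotient decouples, and classical Courant--Fischer applies per slice.

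The gap is in the optimality direction, and you are worrying about the wrong correspondence. The issue is not whether the prescribed families cover every \emph{product} of $k$-dimensional subspaces (they do, essentially by construction); it is that the outer $\max$ in the statement ranges over \emph{all} subspaces $S_k\subseteq\mathbb{C}^{m\times p}$ with $\dim(S_k)=k\times p$, including subspaces whose Fourier image is not a product across slices. For such an $S_k$ the slicewise projections $P_j(\widehat{S}_k)$ can have dimensions $k_j$ with only $\sum_j k_j\geq kp$, so individual slices may see subspaces of dimension larger than $k$ (driving that component of the min below $\lambda_k(\widehat{\mathcal{A}}^{(j)})$) or even dimension $0$ (making the quotient undefined in that slice); moreover the componentwise minima over the slices need not be attained by a single $\mathbf{X}\in S_k$, so the vector-valued $\min_{\mathbf{X}\in S_k}$ does not split into $p$ independent scalar minimizations. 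Your reduction therefore proves only that the max over \emph{product} subspaces equals $\mathbf{d}_k$; to conclude that no non-product $S_k$ does better you still need the intersection argument $S_k\cap T_k\neq\{0\}$ carried out in the full $mp$-dimensional space (as the paper does), or you must argue explicitly that restricting the max to product subspaces loses nothing. Either supplement would close the proof; as written, the upper bound $\max_{S_k}\min_{\mathbf{X}\in S_k}\left(\mathbf{X}^{\mathrm{H}}\star\mathcal{A}\star\mathbf{X}\right)\big/_{\bigodot}\left(\mathbf{X}^{\mathrm{H}}\star\mathbf{X}\right)\leq\mathbf{d}_k$ is not established.
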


All these inequalities and maximum/minimum eigentuples relation about T-product tensors will be utilized to derive a serious of new tail bounds for the extreme eigenvalue and eigentuple for sums of random T-product tensors. These new inequalities different from author Chang's previous works about bounds for sums of random tensors based on Einstein product~\cite{chang2020convenient, chang2021general}. 

\subsection{Paper Organization}

The rest of this paper is organized as follows. In Section~\ref{sec:Tproduct Tensors}, basic notions of T-product tensors are introduced. Lieb's concavity theorem under T-product will be studied in Section~\ref{sec:Lieb's Concavity Under Tproduct}. General tail bounds for random T-product tensors are provided in Section~\ref{sec:Tail Bounds By Concatenation of Liebs Concavity}. Courant-Fischer Theorem under T-product is given in Section~\ref{sec:Courant-Fischer Theorem under T-product Tensors and Minimum}. Finally, conclusion will be drawn in Section~\ref{sec:Conclusion}.

\noindent \textbf{Nomenclature:} The sets of complex and real numbers are denoted by $\mathbb{C}$ and $\mathbb{R}$, respectively. The symbol $\define$ denotes mathematical definition.

\section{T-product Tensors}\label{sec:Tproduct Tensors} 

In this section, we will review T-product operations briefly and discuss related properties in Sec.~\ref{subsec:What are T-product Tensors}. The T-SVD decomposition of T-product tensors and T-Symmetric tensors will be presented in Sec.~\ref{subsec:TSVD}

\subsection{What are T-product Tensors}\label{subsec:What are T-product Tensors}

For a third order tensor $\mathcal{C} \in \mathbb{C}^{m \times n \times p}$, we define $\mbox{bcirc}$ operation to the tensor $\mathcal{C}$ as:
\begin{eqnarray}
\mbox{bcirc} (\mathcal{C} ) \define \left(
    \begin{array}{ccccc}
       \mathbf{C}^{(1)}  &  \mathbf{C}^{(p)}  &  \mathbf{C}^{(p-1)}  & \cdots  &  \mathbf{C}^{(2)}   \\
       \mathbf{C}^{(2)}  &  \mathbf{C}^{(1)}  &  \mathbf{C}^{(p)}  & \cdots  &  \mathbf{C}^{(3)}   \\
       \vdots  &  \vdots  & \vdots  & \cdots  & \vdots   \\
       \mathbf{C}^{(p)}  &  \mathbf{C}^{(p-1)}  &  \mathbf{C}^{(p-2)}  & \cdots  &  \mathbf{C}^{(1)}   \\
    \end{array}
\right),
\end{eqnarray}
where $\mathbf{C}^{(1)}, \cdots, \mathbf{C}^{(p)} \in \mathbb{C}^{ m \times n}$ are frontal slices of tensor $\mathcal{C}$. The inverse operation of $\mbox{bcirc}$ is denoted as $\mbox{bcirc}^{-1}$ with relation $\mbox{bcirc}^{-1} ( \mbox{bcirc} ( \mathcal{C} )) \define \mathcal{C}$. 

For a third order tensor $\mathcal{C} \in \mathbb{C}^{m \times m \times p}$, we define Hermitian transpose of $\mathcal{C}$, denoted by $\mathcal{C}^{\mathrm{H}}$ , as 
\begin{eqnarray}\label{eq:Hermitian Transpose Def}
\mathcal{C}^{\mathrm{H}} = \mbox{bcirc}^{-1}(     (\mbox{bcirc}(\mathcal{C}))^{\mathrm{H}}  ). 
\end{eqnarray}
And a tensor $\mathcal{D} \in \mathbb{C}^{m \times m \times p }$ is called a Hermitian T-product tensor if $ \mathcal{D}^{\mathrm{H}}  = \mathcal{D}$. Similarly, 
we also define ranspose of $\mathcal{C}$, denoted by $\mathcal{C}^{\mathrm{T}}$ , as 
\begin{eqnarray}\label{eq:Only Transpose Def}
\mathcal{C}^{\mathrm{T}} = \mbox{bcirc}^{-1}(     (\mbox{bcirc}(\mathcal{C}))^{\mathrm{T}}  ). 
\end{eqnarray}
And a tensor $\mathcal{D} \in \mathbb{C}^{m \times m \times p }$ is called a Symmetric T-product tensor if $ \mathcal{D}^{\mathrm{T}}  = \mathcal{D}$.

The identity tensor $\mathcal{I}_{mmp} \in \mathbb{C}^{m \times m \times p }$ can be defined as:
\begin{eqnarray}\label{eq:I_mmp def}
\mathcal{I}_{mmp} = \mbox{bcirc}^{-1}(  \mathbf{I}_{mp}  ),
\end{eqnarray}
where $  \mathbf{I}_{mp} $ is the identity matrix in $\mathbb{R}^{mp \times mp}$. A zero tensor, denoted as $\mathcal{O}_{mnp} \in \mathbb{C}^{m \times n \times p}$, is a tensor that all elements inside the tensor as $0$. 

In order to define the T-product operation, we need to define another kind of operation over a third order tensor. For a third order tensor $\mathcal{C} \in \mathbb{C}^{m \times n \times p}$, we define $\mbox{unfold}$ operation to the tensor $\mathcal{C}$ as:
\begin{eqnarray}
\mbox{unfold} (\mathcal{C} ) \define \left(
    \begin{array}{c}
     \mathbf{C}^{(1)} \\
     \mathbf{C}^{(2)} \\
      \vdots \\
     \mathbf{C}^{(p)} \\
    \end{array}
\right),
\end{eqnarray}
where $\mbox{unfold} (\mathcal{C} ) \in \mathbb{C}^{mp \times n}$, and the inverse operation of $\mbox{unfold}$ is $\mbox{fold}$ with the relation $\mbox{fold}(\mbox{unfold} ( \mathcal{C} ) ) \define \mathcal{C}$. Given  $\mathcal{C} \in \mathbb{C}^{m \times n \times p}$ and $\mathcal{D} \in \mathbb{C}^{n \times k \times p}$, we define the T-product between  $\mathcal{C}$ and  $\mathcal{D}$ as 
\begin{eqnarray}\label{eq:T-product def}
\mathcal{C} \star \mathcal{D} \define \mbox{fold} ( \mbox{bcirc}( \mathcal{D}) \mbox{unfold}(\mathcal{D}) ), 
\end{eqnarray}
where $\mathcal{C} \star \mathcal{D} \in \mathbb{C}^{m \times k \times p}$. 

\begin{definition}\label{def:standard form of a real f-Diagonal Tensor}
Let $\mathcal{S} = (s_{ijk}) \in \mathbb{C}^{m \times n \times p}$ be a f-diagonal tensor, i.e., each frontal slice of tensor $\mathcal{S}$ is a diagonal matrix. Let $\mathbf{s}_i = (s_{ii1}, s_{ii2}, \cdots, s_{iip})^{\mathrm{T}}$ be the $ii-$th tube of $\mathcal{S}$ for $1 \leq i \leq \min\{m,n\}$. The  f-diagonal tensor $\mathcal{S}$ is in its \emph{standard form} if $\mathbf{s}_1 \geq \mathbf{s}_2 \geq \cdots \geq \mathbf{s}_{\min\{m,n\}}$, where $\geq$ is the elementwise comparison between two vectors.
\end{definition}


We define the T-product tensor \emph{trace} for a tensor $\mathcal{C} =  (c_{ijk}) \in \mathbb{C}^{m \times m \times p}$, denoted by $\mathrm{Tr}(\mathcal{C})$, as following
\begin{eqnarray}\label{eq:trace def}
\mathrm{Tr}(\mathcal{C}) \define \sum\limits_{i=1}^{m}\sum\limits_{k=1}^{p} c_{iik},
\end{eqnarray}
which is the summation of all entries in f-diagonal components. Then, we have following lemma about trace properties. 
\begin{lemma}\label{lma:T product trace properties}
For any tensors $\mathcal{C}, \mathcal{D} \in \mathbb{C}^{m \times m \times p}$, we have 
\begin{eqnarray}\label{eq:trace linearity}
\mathrm{Tr}(c \mathcal{C} + d \mathcal{D}) = c  \mathrm{Tr}(\mathcal{C}) + d \mathrm{Tr}(\mathcal{D}),
\end{eqnarray}
where $c, d$ are two contants. And, the transpose operation will keep the same trace value, i.e., 
\begin{eqnarray}\label{eq:trace transpose same}
\mathrm{Tr}(\mathcal{C}) = \mathrm{Tr}(\mathcal{C}^{\mathrm{T}}).
\end{eqnarray}
Finally, we have 
\begin{eqnarray}\label{eq:trace commutativity}
\mathrm{Tr}(\mathcal{C} \star  \mathcal{D} ) = \mathrm{Tr}(\mathcal{D} \star  \mathcal{C}).
\end{eqnarray}
\end{lemma}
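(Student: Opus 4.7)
The plan is to derive all three identities from the equivalent form $\mathrm{Tr}(\mathcal{C}) = \sum_{k=1}^{p}\mathrm{tr}(\mathbf{C}^{(k)})$ of~\eqref{eq:trace def}, combined with elementary matrix trace facts and the block-circulant structure of $\mathrm{bcirc}$. Identity~\eqref{eq:trace linearity} is immediate: since $(c\mathcal{C} + d\mathcal{D})_{iik} = c\,c_{iik} + d\,d_{iik}$ entrywise, linearity of finite summation closes it in one line.

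For~\eqref{eq:trace transpose same} I would unpack $\mathcal{C}^{\mathrm{T}} = \mathrm{bcirc}^{-1}((\mathrm{bcirc}(\mathcal{C}))^{\mathrm{T}})$ to read off the frontal slices of $\mathcal{C}^{\mathrm{T}}$. A direct block-matrix computation shows that $(\mathrm{bcirc}(\mathcal{C}))^{\mathrm{T}}$ is again block circulant (so $\mathrm{bcirc}^{-1}$ is well defined) and that $(\mathcal{C}^{\mathrm{T}})^{(k)} = (\mathbf{C}^{(\sigma(k))})^{\mathrm{T}}$ for the permutation $\sigma$ of $\{1,\ldots,p\}$ given by $\sigma(1) = 1$ and $\sigma(k) = p - k + 2$ for $k \geq 2$. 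Since ordinary matrix transpose preserves diagonals, $\mathrm{tr}((\mathcal{C}^{\mathrm{T}})^{(k)}) = \mathrm{tr}(\mathbf{C}^{(\sigma(k))})$, and summing over $k$ collapses the permutation and yields $\mathrm{Tr}(\mathcal{C}^{\mathrm{T}}) = \mathrm{Tr}(\mathcal{C})$.

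For the cyclic identity~\eqref{eq:trace commutativity} the trick I intend to use is to collapse the whole sum of frontal slices of $\mathcal{C}\star\mathcal{D}$. Expanding $\mathrm{bcirc}(\mathcal{C})\,\mathrm{unfold}(\mathcal{D})$ via the definition of $\mathrm{bcirc}$ yields $(\mathcal{C}\star\mathcal{D})^{(k)} = \sum_{j=1}^{p} \mathbf{C}^{((k-j)\bmod p + 1)}\mathbf{D}^{(j)}$. Summing over $k$ and observing that for each fixed $j$ the index $(k-j)\bmod p + 1$ sweeps $\{1,\ldots,p\}$ exactly once, I obtain $\sum_{k=1}^{p}(\mathcal{C}\star\mathcal{D})^{(k)} = \bar{\mathbf{C}}\,\bar{\mathbf{D}}$, where $\bar{\mathbf{C}} := \sum_{k}\mathbf{C}^{(k)}$ and $\bar{\mathbf{D}} := \sum_{k}\mathbf{D}^{(k)}$. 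Hence $\mathrm{Tr}(\mathcal{C}\star\mathcal{D}) = \mathrm{tr}(\bar{\mathbf{C}}\bar{\mathbf{D}}) = \mathrm{tr}(\bar{\mathbf{D}}\bar{\mathbf{C}}) = \mathrm{Tr}(\mathcal{D}\star\mathcal{C})$ by the ordinary matrix cyclic trace identity. The only delicate step in the whole proof is the modular-index bookkeeping needed to identify $\sigma$ and to verify the convolution collapse; neither is conceptually difficult, but both are sensitive to the precise shift convention adopted in the definition of $\mathrm{bcirc}$.
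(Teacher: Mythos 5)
Your proposal is correct, and for the two nontrivial identities it is somewhat more careful than, and in one place structurally different from, the paper's own argument. For linearity both of you simply invoke entrywise linearity of the finite sum defining $\mathrm{Tr}$. For the transpose identity the paper merely asserts it follows ``from the trace definition directly,'' whereas you actually unpack $\mathcal{C}^{\mathrm{T}}=\mbox{bcirc}^{-1}((\mbox{bcirc}(\mathcal{C}))^{\mathrm{T}})$, identify the slice permutation $\sigma(1)=1$, $\sigma(k)=p-k+2$, and note that matrix transposition preserves diagonals; this is the right level of detail, since the transpose genuinely permutes the frontal slices and the claim is not a tautology. For the cyclic identity the paper works slice by slice: it writes out the $i$-th frontal slice of $\mathcal{C}\star\mathcal{D}$ and of $\mathcal{D}\star\mathcal{C}$ as matching circular convolutions and applies $\mathrm{tr}(\mathbf{A}\mathbf{B})=\mathrm{tr}(\mathbf{B}\mathbf{A})$ term by term before summing over $i$. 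You instead sum over all slices first, observe that the convolution collapses to the single product $\bar{\mathbf{C}}\bar{\mathbf{D}}$ with $\bar{\mathbf{C}}=\sum_k\mathbf{C}^{(k)}$, and apply the matrix cyclic identity exactly once; this yields the cleaner intermediate fact $\mathrm{Tr}(\mathcal{C}\star\mathcal{D})=\mathrm{tr}(\bar{\mathbf{C}}\bar{\mathbf{D}})$, which is reusable elsewhere, at the cost of the modular bookkeeping you flag. Both routes are sound; your collapse argument is arguably tidier, while the paper's slice-matching avoids introducing the aggregate matrices. (One caution: the paper's displayed definition of $\star$ contains a typo, $\mbox{bcirc}(\mathcal{D})\mbox{unfold}(\mathcal{D})$ instead of $\mbox{bcirc}(\mathcal{C})\mbox{unfold}(\mathcal{D})$, and its slice formulas run the index up to $m$ rather than $p$; your indexing is the correct one.)
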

\textbf{Proof:}
Eqs.~\eqref{eq:trace linearity} and~\eqref{eq:trace transpose same} are true from trace definiton directly. 

From Eq.~\eqref{eq:T-product def}, the $i$-th frontal slice matrix of $\mathcal{D} \star  \mathcal{C}$ is 
\begin{eqnarray}\label{eq:i slice DC}
\mathbf{D}^{(i)} \mathbf{C}^{(1)}  + \mathbf{D}^{(i-1)} \mathbf{C}^{(2)} + \cdots +
\mathbf{D}^{(1)} \mathbf{C}^{(i)} + \mathbf{D}^{(m)} \mathbf{C}^{(i+1) } + \cdots
+ \mathbf{D}^{ (i+1)} \mathbf{C}^{(m)}, 
\end{eqnarray}
similarly, the $i$-th frontal slice matrix of $\mathcal{C} \star  \mathcal{D}$ is 
\begin{eqnarray}\label{eq:i slice CD}
\mathbf{C}^{(i)} \mathbf{D}^{(1)}  + \mathbf{C}^{(i-1)} \mathbf{D}^{(2)} + \cdots +
\mathbf{C}^{(1)} \mathbf{D}^{(i)} + \mathbf{C}^{(m)} \mathbf{D}^{(i+1)} + \cdots
+ \mathbf{C}^{ (i+1)}  \mathbf{D}^{(m)}. 
\end{eqnarray}
Because the matrix trace of Eq.~\eqref{eq:i slice DC} and the matrix trace of Eq.~\eqref{eq:i slice CD} are same for each slice $i$ due to linearity and invariant under cyclic permutations of matrix trace, we have Eq.~\eqref{eq:trace commutativity} by summing over all frontal matrix slices. $\hfill \Box$


Below, we will define the \emph{determinant} of a T-product tensor $\mathcal{C} \in \mathbb{C}^{m \times m \times p}$ and its asscoiate properties. The determinant of a $m \times m \times p$ tensor $\mathcal{C}$ is the $m$-linear alternating form defined as
\begin{eqnarray}\label{eq:det def}
\mathrm{det}: \left(\mathbf{V}_1, \cdots, \mathbf{V}_m \right) \rightarrow \mathbb{C}, 
\end{eqnarray}
where $\mathbf{V}_i  \in \mathbb{C}^{m \times p}$ is the $i$-th lateral matrix of the tensor $\mathcal{C}$. Moreover, we require that $\mathrm{det}(  \mathcal{I}_{mmp}  ) = 1$. Given two tensors  $\mathcal{C}, \mathcal{D} \in \mathbb{C}^{m \times m \times p}$, the determinant of $\mathcal{C} \star \mathcal{D}$ is $\mathrm{det}( \mathcal{C} \star \mathcal{D}  ) = \lambda \mathrm{det}( \mathcal{D} ) $ for some value $\lambda$. If we set $\mathcal{D}$ as $\mathcal{I}_{mmp}$, we have 
\begin{eqnarray}
\mathrm{det}( \mathcal{C} \star \mathcal{I}_{mmp}  )  = \lambda \mathrm{det}( \mathcal{I}_{mmp} ) = \lambda = \mathrm{det}( \mathcal{C} ).
\end{eqnarray}
Then, we have 
\begin{eqnarray}\label{eq:homo of T-tensors det}
\mathrm{det}( \mathcal{C} \star \mathcal{D}  ) = \mathrm{det}( \mathcal{C}) \mathrm{det}( \mathcal{D}  ) 
\end{eqnarray}

\subsection{T-SVD Decomposition}\label{subsec:TSVD}

Given a tensor $\mathcal{C} \in \mathbb{C}^{m \times n \times p}$, Theorem 4.1 in~\cite{kilmer2011factorization} proposed a T-singular value decomposition (T-SVD) for $\mathcal{C}$ as:
\begin{eqnarray}\label{eq:T-SVD Thm}
\mathcal{C} = \mathcal{U} \star \mathcal{S} \star \mathcal{V}^{\mathrm{T}},
\end{eqnarray}
where $\mathcal{U} \in \mathbb{C}^{m \times m \times p}$ and $\mathcal{V} \in \mathbb{C}^{n \times n \times p}$ are orthogonal tensors, and $\mathcal{S} \in \mathbb{C}^{m \times n \times p}$ is a f-diagonal tensor. We also have $\mathcal{U}^{\mathrm{T}} \star \mathcal{U} = 
\mathcal{I}_{mmp}$ and $\mathcal{V}^{\mathrm{T}} \star \mathcal{V} = 
\mathcal{I}_{nnp}$. We define $\sigma(\mathcal{C})$ be the spectrum of $\mathcal{C}$, i.e., the set of $s \in \mathbb{C}$, where $s$ are nonzero entries from tensor $\mathcal{S}$. We use $\left\Vert \cdot \right\Vert$ for the spectral norm, which is the largest singular value of a T-product tensor. 

Given any integer $k$ and $\mathcal{B} \in \mathbb{C}^{m \times m \times p}$, we define $\mathcal{B}^k$ as 
\begin{eqnarray}
\mathcal{B}^k \define \overbrace{\mathcal{B} \star  \mathcal{B}  \star  \mathcal{B}  \star \cdots \star  \mathcal{B} }^{\mbox{$k$ terms of $\mathcal{B}$ under T-product}}
\end{eqnarray}
where $\mathcal{B}^k \in \mathbb{C}^{m \times m \times p}$. Then, we have following corollary from T-SVD in Eq.~\eqref{eq:T-SVD Thm}. 
\begin{corollary}\label{cor:tensor power expression}
Suppose $\mathcal{B} \in \mathbb{C}^{m \times m \times p}$ is a Hermitian T-product tensor, and $\mathcal{S}^{-1}$ exists, where $\mathcal{S}$ is f-diagonal tensor obtained from the T-SVD of the tensor $\mathcal{C}$. Then, we have 
\begin{eqnarray}
\mathcal{B}^k =  \mathcal{U} \star \mathcal{S}^k \star \mathcal{U}^{\mathrm{T}}.
\end{eqnarray}
\end{corollary}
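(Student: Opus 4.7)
The plan is to combine the T-SVD of Eq.~\eqref{eq:T-SVD Thm} with the Hermitian hypothesis in order to obtain a self-adjoint factorization $\mathcal{B} = \mathcal{U} \star \mathcal{S} \star \mathcal{U}^{\mathrm{T}}$, and then telescope the $k$-fold $\star$-product using $\mathcal{U}^{\mathrm{T}} \star \mathcal{U} = \mathcal{I}_{mmp}$. First I would write $\mathcal{B} = \mathcal{U} \star \mathcal{S} \star \mathcal{V}^{\mathrm{T}}$ via T-SVD and transport the identity through $\mbox{bcirc}$, which is an injective algebra homomorphism turning $\star$ into ordinary matrix product and Hermitian transpose of tensors into Hermitian transpose of matrices (see Eq.~\eqref{eq:Hermitian Transpose Def}). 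Hence $\mbox{bcirc}(\mathcal{B})$ is a Hermitian block-circulant matrix whose ordinary SVD is $\mbox{bcirc}(\mathcal{U})\,\mbox{bcirc}(\mathcal{S})\,\mbox{bcirc}(\mathcal{V})^{\mathrm{T}}$.

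Next I would invoke the standard fact that a Hermitian matrix without zero singular value admits an SVD in which the left and right unitary factors coincide, with signs of negative eigenvalues absorbed into a diagonal factor. The hypothesis that $\mathcal{S}^{-1}$ exists eliminates the zero-singular-value degeneracy, so this choice is unambiguous. Pulling back through $\mbox{bcirc}^{-1}$ yields $\mathcal{V} = \mathcal{U}$, and thus $\mathcal{B} = \mathcal{U} \star \mathcal{S} \star \mathcal{U}^{\mathrm{T}}$.

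The final step is induction on $k$. The case $k = 1$ is the Hermitian T-SVD just obtained, and the inductive step uses associativity of $\star$ together with $\mathcal{U}^{\mathrm{T}} \star \mathcal{U} = \mathcal{I}_{mmp}$:
\begin{eqnarray*}
\mathcal{B}^{k} \; = \; \left(\mathcal{U} \star \mathcal{S}^{k-1} \star \mathcal{U}^{\mathrm{T}}\right) \star \left(\mathcal{U} \star \mathcal{S} \star \mathcal{U}^{\mathrm{T}}\right) \; = \; \mathcal{U} \star \mathcal{S}^{k-1} \star \mathcal{S} \star \mathcal{U}^{\mathrm{T}} \; = \; \mathcal{U} \star \mathcal{S}^{k} \star \mathcal{U}^{\mathrm{T}},
\end{eqnarray*}
which closes the induction.

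The main obstacle is the reduction to equal left and right orthogonal factors. In the standard form of Definition~\ref{def:standard form of a real f-Diagonal Tensor} the tubes of $\mathcal{S}$ are nonnegative and weakly decreasing, so whenever $\mathcal{B}$ has negative eigenvalues we must either permit the tubes of $\mathcal{S}$ to carry signs, or absorb a diagonal tensor of signs into $\mathcal{V}$ before identifying it with $\mathcal{U}$. The cleanest way to handle this bookkeeping is to diagonalize $\mbox{bcirc}(\mathcal{B})$ in the Fourier (DFT) basis so that the claim reduces, on each frequency block, to the matrix statement $B^k = U S^k U^{\mathrm{H}}$ for a Hermitian matrix with invertible $S$; everything else is formal manipulation with $\star$.
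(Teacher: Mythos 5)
Your argument is sound, and it is worth noting that the paper itself offers no proof of this corollary at all --- it is stated as an immediate consequence of the T-SVD in Eq.~\eqref{eq:T-SVD Thm}, and the subsequent spectral expression Eq.~\eqref{eq:spectral expression} likewise just asserts the Hermitian factorization with equal left and right factors. You correctly isolate the one nontrivial point, namely that a Hermitian T-product tensor admits $\mathcal{V}=\mathcal{U}$ (with signs absorbed into the f-diagonal factor), and your suggestion to establish this by block-diagonalizing $\mbox{bcirc}(\mathcal{B})$ in the DFT basis is the right mechanism: an arbitrary SVD of the Hermitian matrix $\mbox{bcirc}(\mathcal{B})$ need not have block-circulant factors, so one must diagonalize each Fourier block separately and transform back to guarantee that $\mathcal{U}$ and $\mathcal{S}$ are genuine tensors. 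The telescoping induction via $\mathcal{U}^{\mathrm{T}}\star\mathcal{U}=\mathcal{I}_{mmp}$ is then routine. Two small remarks: the paper defines $\mathcal{B}^k$ for any integer $k$, and the hypothesis that $\mathcal{S}^{-1}$ exists is really there to cover negative exponents, so you should add the one-line observation that $\mathcal{B}^{-1}=\mathcal{U}\star\mathcal{S}^{-1}\star\mathcal{U}^{\mathrm{T}}$ and the same telescoping applies; and the use of $\mathcal{U}^{\mathrm{T}}$ rather than $\mathcal{U}^{\mathrm{H}}$ for complex Hermitian tensors is an inconsistency inherited from the paper, not a flaw in your argument.
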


Then, we can define the T-product tensor exponential function and the T-product tensor logarithm function under T-product as below with tensor power.
\begin{definition}\label{def: tensor exponential}
Given a tensor $\mathcal{X} \in \mathbb{C}^{m \times m \times p}$, the \emph{tensor exponential} of the tensor $\mathcal{X}$ is defined as 
\begin{eqnarray}\label{eq: tensor exp def}
e^{\mathcal{X}} \define \sum\limits_{k=0}^{\infty} \frac{\mathcal{X}^{k}}{k !}, 
\end{eqnarray}
where $\mathcal{X}^0$ is defined as the identity tensor $\mathcal{I}_{mmp}$. Given a tensor $\mathcal{Y}$, the tensor $\mathcal{X}$ is said to be a \emph{tensor logarithm} of $\mathcal{Y}$ if $e^{\mathcal{X}}  = \mathcal{Y}$.
\end{definition}


From T-SVD in Eq.~\eqref{eq:T-SVD Thm}, we can express a Hermitian T-product tensor $\mathcal{C} \in \mathbb{C}^{m \times m \times p}$ as 
\begin{eqnarray}\label{eq:spectral expression}
\mathcal{C} = \sum\limits_{i=1}^{m}\sum\limits_{k=0}^{p-1} s_{iik} \mathbf{U}_i^{[k]} \star \left(\mathbf{U}_i^{[k]} \right)^{\mathrm{T}}, 
\end{eqnarray}
where $ s_{iik} $ are \emph{eigenvalues} of the tensor $\mathcal{C}$, and $ \mathbf{U}_i^{[k]} \in \mathbb{C}^{m \times 1 \times p}$ is the $i$-th lateral slice (matrix) of the tensor $\mathcal{U}$ after $k$ cyclic permutations.  The matrix $\mathbf{U}_i^{[0]}$ is obtained from the $i$-th lateral slice (matrix) of the tensor $\mathcal{U}$ with column vectors as $\mathbf{u}_i^{(1)}, \cdots, \mathbf{u}_i^{(p)}$, then we have 
\begin{eqnarray}
\mathbf{U}_i^{[k]} = \left( \mathbf{u}_i^{ (p+1 - k) \bmod p}, \mathbf{u}_i^{(p+2 - k) \bmod p }, \cdots, 
 \mathbf{u}_i^{(p)}, \mathbf{u}_i^{(1)},  \cdots   \mathbf{u}_i^{(p - k)}   \right). 
\end{eqnarray}
Note that we have $\left( \mathbf{U}_i^{[k]} \right)^{\mathrm{H}} \star \mathbf{U}_i^{[k]} = \mathcal{I}_{11p}$ and  $\left( \mathbf{U}_i^{[k]} \right)^{\mathrm{H}} \star \mathbf{U}_{i'}^{[k']} = \mathcal{O}_{11p}$ for $i \neq i'$ or $k \neq k'$. From Theorem 3.6 in~\cite{qi2021tquadratic}, all values of $s_{iik}$ are real and we define $\lambda_{\max} \define \max\limits_{\substack{1 \leq i \leq m \\ 0 \leq k \leq p-1} } \{ s_{iik} \}$, and  $\lambda_{\min} \define \min\limits_{\substack{1 \leq i \leq m \\ 0 \leq k \leq p-1} } \{ s_{iik} \}$.

From Corollary~\ref{cor:tensor power expression} and Eq.~\eqref{eq:spectral expression}, we can have following spectral mapping lemma.
\begin{lemma}\label{lma:spectral mapping}
For any continous function $f: \mathbb{R} \rightarrow \mathbb{R}$ and any Hermitian T-product tensor $\mathcal{C}$, we have 
\begin{eqnarray}
f( \mathcal{C} ) =  \sum\limits_{i=1}^{m}\sum\limits_{k=0}^{p-1} f(s_{iik}) \mathbf{U}_i^{[k]} \star \left(\mathbf{U}_i^{[k]} \right)^{\mathrm{T}}.
\end{eqnarray}
\end{lemma}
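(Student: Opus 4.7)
My plan is to prove the identity first on polynomials and then extend to continuous $f$ by uniform approximation on the spectrum. The key structural fact I will exploit is the mutual orthogonality stated just before the lemma, namely $(\mathbf{U}_i^{[k]})^{\mathrm{H}}\star \mathbf{U}_{i'}^{[k']}=\mathcal{I}_{11p}$ when $(i,k)=(i',k')$ and $\mathcal{O}_{11p}$ otherwise. Writing $\mathcal{P}_{i,k}\define \mathbf{U}_i^{[k]}\star(\mathbf{U}_i^{[k]})^{\mathrm{T}}$, this orthogonality together with associativity of $\star$ gives $\mathcal{P}_{i,k}\star\mathcal{P}_{i',k'}=\delta_{(i,k),(i',k')}\,\mathcal{P}_{i,k}$, i.e.\ the $\mathcal{P}_{i,k}$ behave like a family of mutually orthogonal idempotents summing (by the T-SVD completeness in Eq.~\eqref{eq:spectral expression} applied to $\mathcal{I}_{mmp}$) to $\mathcal{I}_{mmp}$.

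First I would prove the lemma for monomials $f(x)=x^n$ by induction on $n$. The base case $n=0$ is the resolution of the identity just noted, and $n=1$ is Eq.~\eqref{eq:spectral expression} itself. For the inductive step, using Eq.~\eqref{eq:spectral expression} and the inductive hypothesis,
\begin{eqnarray*}
\mathcal{C}^{n+1}=\Big(\sum_{i,k} s_{iik}\mathcal{P}_{i,k}\Big)\star\Big(\sum_{i',k'} s_{i'i'k'}^{n}\mathcal{P}_{i',k'}\Big)=\sum_{i,k} s_{iik}^{n+1}\mathcal{P}_{i,k},
\end{eqnarray*}
where the cross terms vanish by the orthogonality of the $\mathcal{P}_{i,k}$. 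Linearity of the T-product then upgrades the identity from monomials to arbitrary polynomials $q$, giving $q(\mathcal{C})=\sum_{i,k} q(s_{iik})\mathcal{P}_{i,k}$.

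Next I would extend to continuous $f$. The eigenvalues $\{s_{iik}\}$ form a finite set contained in $[\lambda_{\min},\lambda_{\max}]$, so by the Weierstrass approximation theorem there is a sequence of polynomials $q_n$ with $\sup_{x\in[\lambda_{\min},\lambda_{\max}]}|f(x)-q_n(x)|\to 0$. Define $f(\mathcal{C})$ consistently via the spectral recipe on the right-hand side (which is well-defined because the $s_{iik}$ are real by Theorem~3.6 of~\cite{qi2021tquadratic}). Then on the right-hand side,
\begin{eqnarray*}
\Big\lVert\sum_{i,k} f(s_{iik})\mathcal{P}_{i,k}-\sum_{i,k} q_n(s_{iik})\mathcal{P}_{i,k}\Big\rVert\leq \max_{i,k}|f(s_{iik})-q_n(s_{iik})|\longrightarrow 0,
\end{eqnarray*}
using that the $\mathcal{P}_{i,k}$ are orthogonal projectors so their weighted sum has spectral norm bounded by the largest coefficient magnitude. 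By the polynomial case, $q_n(\mathcal{C})$ equals this spectral sum, so passing to the limit gives the claim.

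The only delicate point is ensuring that the definition of $f(\mathcal{C})$ is canonical, i.e.\ independent of the chosen T-SVD. I would handle this by transporting to the block-circulant picture: $\mathrm{bcirc}(\mathcal{C})$ is an ordinary Hermitian $mp\times mp$ matrix whose classical spectral decomposition corresponds, under $\mathrm{bcirc}^{-1}$, exactly to the sum in Eq.~\eqref{eq:spectral expression}. Thus any two T-SVDs of $\mathcal{C}$ produce the same $\sum_{i,k}f(s_{iik})\mathcal{P}_{i,k}$, matching the classical matrix functional calculus on $\mathrm{bcirc}(\mathcal{C})$. This sidesteps multiplicity ambiguities in the choice of $\{\mathbf{U}_i^{[k]}\}$ and is the main obstacle I anticipate; everything else is algebraic bookkeeping powered by the orthogonal-idempotent structure of the $\mathcal{P}_{i,k}$.
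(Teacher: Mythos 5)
Your proposal is correct and follows essentially the same route the paper intends: the paper gives no written proof, simply asserting the lemma ``from Corollary~\ref{cor:tensor power expression} and Eq.~\eqref{eq:spectral expression},'' and your argument (powers via the orthogonal-idempotent structure of the $\mathcal{P}_{i,k}$, then polynomials by linearity, then continuous $f$ by uniform approximation on the finite spectrum) is exactly that justification carried out in full, with the well-definedness point handled sensibly through $\mbox{bcirc}$. The only blemish, inherited from the paper itself, is the mismatch between the transpose $(\mathbf{U}_i^{[k]})^{\mathrm{T}}$ appearing in Eq.~\eqref{eq:spectral expression} and the Hermitian-transpose orthogonality relations $\left(\mathbf{U}_i^{[k]}\right)^{\mathrm{H}} \star \mathbf{U}_{i'}^{[k']}$ that your idempotent computation actually needs.
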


\subsection{Positive Semidefinite T-product Tensors}\label{subsec:T-Symmetric Positive Semidefinite Tensors}

Given a Hermitian T-product tensor $\mathcal{C} \in \mathbb{C}^{m \times m \times p}$, and a tensor $\mathcal{X} \in \mathbb{C}^{m \times 1 \times p }$ obtained from treating the matrix $\mathbf{X} \in \mathbb{C}^{m \times p }$ as a tensor with dimensions $\mathbb{R}^{m \times 1 \times p }$. We define following quadratic form with respect to the matrix $\mathbf{X}$ as 
\begin{eqnarray}
F_{\mathcal{C}} (\mathbf{X}) \define \mathcal{X}^{\mathrm{T}} \star \mathcal{C} \star \mathcal{X}, 
\end{eqnarray}
and we say that a tensor $\mathcal{C}$ is T-positive definite (TPD) (or T-positive semi-definite (TPSD)) if $F_{\mathcal{C}} (\mathbf{X}) > \mathbf{0}$ (or $F_{\mathcal{C}} (\mathbf{X}) \geq \mathbf{0}$ )  for any $\mathbf{X} \in  \mathbb{C}^{m \times p}$, where $\mathbf{0}$ is a zero vector with size $p$. 

We now define eigentuples and eigenmatrices of a Hermitian T-product tensor which will be used to characterize TPD or TPSD for a given tensor. For a matrix $\mathbf{X} \in \mathbb{C}^{m \times p} = (\mathbf{x}^{(1)}, \cdots, \mathbf{x}^{(p)})$ , we define unfolding opeartion with respect to the matrix $\mathbf{X}$ columns, denoted by $\mbox{cunfold}(\mathbf{X})$, as 
\begin{eqnarray}\label{eq:columns unfold}
\mbox{cunfold}(\mathbf{X}) \define   \cdot  \left( \begin{array}{c}
       \mathbf{x}^{(1)} \\
       \mathbf{x}^{(2)} \\
       \vdots    \\
       \mathbf{x}^{(p)} \\
    \end{array}
\right),
\end{eqnarray}
where $\mbox{cunfold}(\mathbf{X}) \in  \mathbb{C}^{m p}$. Then, suppose that $\mathcal{C} \in \mathbb{C}^{m \times m \times p}$ is a Hermitian T-product tensor, we define $\mathcal{C} \star \mathbf{X}$ as 
\begin{eqnarray}
\mathcal{C} \star \mathbf{X} = \mbox{fold} ( \mbox{bcirc}( \mathcal{C}) \mbox{cunfold}(\mathbf{X})  ), 
\end{eqnarray}
where $\mathcal{C} \star \mathbf{X} \in  \mathbb{C}^{m \times p}$. We also define a new product operation between a vector $\mathbf{d} = (d_1, d_2, \cdots, d_p)^{\mathrm{T}}$ and a matrix $\mathbf{X} \in \mathbb{C}^{m \times p}$, denoted by $\circ$, as
\begin{eqnarray}\label{eq:circ prod def}
\mathbf{d} \circ \mathbf{X} \define \mathbf{X}   \cdot  \left( \begin{array}{ccccc}
       d_1  &  d_p  &  d_{p-1}  & \cdots  &  d_2   \\
       d_2  &  d_1  &  d_{p}  & \cdots  &  d_3   \\
       \vdots  &  \vdots  & \vdots  & \cdots  & \vdots   \\
       d_p  &  d_{p-1} &  d_{p-2}  & \cdots  &  d_1   \\
    \end{array}
\right),
\end{eqnarray}
where $\cdot$ is the standard matrix multiplication. Suppose that $\mathbf{X} \in \mathbb{C}^{m \times p}$ and $\mathbf{X} \neq \mathbf{O}$, and $\mathbf{d} \in \mathbb{C}^p$, if we have 
\begin{eqnarray}\label{eq:eigen def}
\mathcal{C} \star \mathbf{X} = \mathbf{d} \circ \mathbf{X},
\end{eqnarray}
we call $\mathbf{d}$ as an eigentuple of $\mathcal{C}$, and $\mathbf{X}$ as an eigenmatrix of $\mathcal{C}$ corresponding to the eigentuple $\mathbf{d}$.

From Theorem 4.1 in~\cite{qi2021tquadratic}, a T-square tensor $\mathcal{C} \in \mathbb{C}^{m \times m \times p}$ with eigentuples arranged as f-diagonal tensor $\mathcal{S}$ according to the standard form provided by Definition~\ref{def:standard form of a real f-Diagonal Tensor}, i.e., $\mathbf{s}_1 \geq \mathbf{s}_2 \geq \cdots \geq \mathbf{s}_m$. Then $\mathcal{C}$ is  TPD (or TPSD) if and only if the smallest eigentuple $\mathbf{s}_m > (\mbox{or $\geq$ })\mathbf{0}$.  We use $\left\Vert \mathcal{C} \right\Vert_{\mbox{\tiny{vec}}}$ to represent the spectral norm of eigentuple of the tensor $\mathcal{C}$, which is defined as 
\begin{eqnarray}
\left\Vert \mathcal{C} \right\Vert_{\mbox{\tiny{vec}}} \define \mathbf{d}_{\max}\left( \sqrt{ \mathcal{C}^{\mathrm{H}}\star \mathcal{C}  } \right).
\end{eqnarray}

\subsection{T-product Tensors Analysis}\label{subsec:T-product Tensors Analysis}

We will begin with monotonicity and convexity discussions of the trace function. 
\begin{lemma}\label{lma:monotonicity of trace func}
For a given continous and non-decreasing function $f: \mathbb{R} \rightarrow \mathbb{R}$, the associated trace function on a Hermitian T-product tensor $\mathcal{C}$  is given by 
\begin{eqnarray}
\mathcal{C} \rightarrow \mathrm{Tr} \left(  f (\mathcal{C} )\right).
\end{eqnarray}
Then we have 
\begin{eqnarray}
\mathcal{C} \succeq \mathcal{D} \Longrightarrow \mathrm{Tr} \left(  f (\mathcal{C} )\right) \geq \mathrm{Tr} \left(  f (\mathcal{D} )\right).
\end{eqnarray}
\end{lemma}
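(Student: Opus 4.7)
The plan is to mirror the classical matrix proof via linear interpolation between $\mathcal{D}$ and $\mathcal{C}$. Set $\mathcal{E}(t) \define (1-t)\mathcal{D} + t \mathcal{C}$ for $t \in [0,1]$ and $g(t) \define \mathrm{Tr}(f(\mathcal{E}(t)))$; the goal is to show $g(1) \geq g(0)$. After first reducing to the smooth case by mollifying $f$ (continuity of $\mathrm{Tr}(f(\cdot))$ then extends the conclusion to any continuous non-decreasing $f$), the T-product analogue of the standard matrix derivative identity, obtainable through Lemma~\ref{lma:spectral mapping} together with the cyclicity Eq.~\eqref{eq:trace commutativity}, yields
\begin{eqnarray*}
g'(t) = \mathrm{Tr}\left(f'(\mathcal{E}(t)) \star (\mathcal{C} - \mathcal{D})\right).
\end{eqnarray*}

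Everything then reduces to a single key lemma: $\mathrm{Tr}(\mathcal{A} \star \mathcal{B}) \geq 0$ whenever both $\mathcal{A}, \mathcal{B}$ are TPSD. I would take a TPSD square root $\mathcal{A}^{1/2}$, available from the T-SVD Eq.~\eqref{eq:T-SVD Thm} applied to $\mathcal{A}$, and use cyclicity to write $\mathrm{Tr}(\mathcal{A} \star \mathcal{B}) = \mathrm{Tr}(\mathcal{A}^{1/2} \star \mathcal{B} \star \mathcal{A}^{1/2})$. The congruence $\mathcal{A}^{1/2} \star \mathcal{B} \star \mathcal{A}^{1/2}$ is itself TPSD because $F_{\mathcal{A}^{1/2} \star \mathcal{B} \star \mathcal{A}^{1/2}}(\mathbf{X}) = F_{\mathcal{B}}(\mathcal{A}^{1/2} \star \mathbf{X}) \geq \mathbf{0}$. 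The trace of any TPSD tensor $\mathcal{X}$ is in turn nonnegative: combining the spectral expansion Eq.~\eqref{eq:spectral expression} with the identity $\mathrm{Tr}(\mathbf{U}_i^{[k]} \star (\mathbf{U}_i^{[k]})^{\mathrm{T}}) = \mathrm{Tr}(\mathcal{I}_{11p}) = 1$ (a consequence of the cyclicity Eq.~\eqref{eq:trace commutativity} and the orthonormality $(\mathbf{U}_i^{[k]})^{\mathrm{H}} \star \mathbf{U}_i^{[k]} = \mathcal{I}_{11p}$) produces $\mathrm{Tr}(\mathcal{X}) = \sum_{i,k} s_{iik}^{\mathcal{X}}$, and for TPSD $\mathcal{X}$ each $s_{iik}^{\mathcal{X}} \geq 0$ by the eigentuple characterization in Theorem~4.1 of~\cite{qi2021tquadratic}. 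Since $f$ non-decreasing forces $f'(\mathcal{E}(t))$ to be TPSD and $\mathcal{C} - \mathcal{D}$ is TPSD by hypothesis, the lemma delivers $g'(t) \geq 0$, and integrating over $[0,1]$ closes the argument.

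The main obstacle I anticipate is justifying the derivative identity for $g(t)$ cleanly in the T-product setting: the tensors $\mathcal{E}(t)$ and $\mathcal{E}'(t) = \mathcal{C} - \mathcal{D}$ need not T-commute, so one cannot simply diagonalize and differentiate entrywise. The cleanest route is probably to expand $f$ in a power series, verify the scalar-level identity $\frac{d}{dt}\mathrm{Tr}(\mathcal{E}(t)^n) = n\,\mathrm{Tr}(\mathcal{E}(t)^{n-1} \star (\mathcal{C} - \mathcal{D}))$ via the T-product Leibniz rule combined with trace cyclicity Eq.~\eqref{eq:trace commutativity}, and then pass to the uniform limit for smooth $f$ on the compact spectral interval containing the eigenvalues of all $\mathcal{E}(t)$. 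As a fallback, one can instead reduce everything to a matrix inequality through the homomorphism $\mbox{bcirc}$, applying the classical matrix monotonicity of trace functions to $\mbox{bcirc}(\mathcal{C}) \succeq \mbox{bcirc}(\mathcal{D})$ and then descending back via the algebraic properties of $\mbox{bcirc}$.
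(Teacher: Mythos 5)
Your proposal follows essentially the same route as the paper's own proof: interpolate between $\mathcal{D}$ and $\mathcal{C}$, differentiate $g(t)=\mathrm{Tr}(f(\mathcal{E}(t)))$, reduce $g'(t)\geq 0$ to the nonnegativity of the trace of a product of two TPSD tensors via the square-root sandwiching trick, and handle merely continuous $f$ by an approximation argument. The only difference is that you spell out the derivative identity and the trace-positivity step in more detail, whereas the paper asserts them with a brief appeal to trace cyclicity and continuity.
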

\textbf{Proof:}
We first assume that the function $f$ is differentiable, then the first derivative of $f$ is greater or equal than zero (monotonicity). We further define a trace function $g(t) \define \mathrm{Tr} \left( f \left( \mathcal{D}  + t ( \mathcal{C} - \mathcal{D}) \right) \right)$. Then, we have
\begin{eqnarray}
\mathrm{Tr} \left(  f (\mathcal{C} )\right) - \mathrm{Tr} \left(  f (\mathcal{D} )\right) = g(1) -g(0) = \int\limits_0^1 g'(t) dt = \int\limits_0^1  \mathrm{Tr} \left(  f' (\mathcal{D} + t  ( \mathcal{C} - \mathcal{D}))\star ( \mathcal{C} - \mathcal{D} ) \right) dt \nonumber \\
= \int\limits_0^1  \mathrm{Tr} \left(  ( \mathcal{C} - \mathcal{D} )^{1/2} \star f' (\mathcal{D} + t  ( \mathcal{C} - \mathcal{D}) ) \star ( \mathcal{C} - \mathcal{D} )^{1/2} \right) \geq 0,  ~~~~~~~~~~~ 
\end{eqnarray}
where we apply Lemma~\ref{lma:T product trace properties} at the last equality, and the last inequality comes from the nonnegative of $f'$. By applying the standard continuity argument, we can relax the requirement that $f$ is continuously differentiable to the requirement that $f$ is  continuous.
$\hfill \Box$

The next lemma will be used to show the convexity of trace function on a Hermitian T-product tensor $\mathcal{C}$. 
\begin{lemma}\label{lma:Peierls Inequality}
Let $\mathcal{C} \in \mathbb{C}^{m \times m \times p}$ be a Hermitian T-product tensor, $f$ convex on $\mathbb{R}$, and $\mathbf{V}_i^{[k]}$ for $1 \leq i \leq m$ and $0 \leq k \leq p-1$ be any orthnormal base of $\mathbb{C}^{m \times p}$. Then, we have 
\begin{eqnarray}\label{eq1:lma:Peierls Inequality}
\mathrm{Tr} \left( f ( \mathcal{C} ) \right) \geq \sum\limits_{i=1}^{m} \sum\limits_{k=0}^{ p -1}  f \left( \left \langle \mathbf{V}_i^{[k]}, \mathcal{C} \star \mathbf{V}_i^{[k]} \right \rangle \right), 
\end{eqnarray}
where $\left\langle \mathbf{V}_i^{[k]}, \mathcal{C} \star \mathbf{V}_i^{[k]} \right \rangle$ is the Frobenius inner product between two matrices $ \mathbf{V}_i^{[k]} $ and $ \mathcal{C} \star \mathbf{V}_i^{[k]} $. There is an equality if each  $\mathbf{V}_i^{[k]}$  is an eigenmatrix of $\mathcal{C}$ and it's the only case if $f$ is strictly convex.
\end{lemma}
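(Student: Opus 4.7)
The plan is to mirror the classical matrix Peierls argument in the T-product setting by (i) invoking the spectral expansion established in Eq.~\eqref{eq:spectral expression} together with Lemma~\ref{lma:spectral mapping} to reduce the trace on the left-hand side to a scalar sum over the eigenvalues $s_{jjl}$, (ii) expanding each basis element $\mathbf{V}_i^{[k]}$ in the eigen-basis $\{\mathbf{U}_j^{[l]}\}$ arising from the T-SVD of $\mathcal{C}$, and (iii) applying scalar Jensen's inequality term-by-term before summing.

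First, I would write $\mathcal{C}=\sum_{j,l} s_{jjl}\,\mathbf{U}_j^{[l]}\star(\mathbf{U}_j^{[l]})^{\mathrm{H}}$ via Eq.~\eqref{eq:spectral expression} and apply Lemma~\ref{lma:spectral mapping} to obtain $f(\mathcal{C})=\sum_{j,l} f(s_{jjl})\,\mathbf{U}_j^{[l]}\star(\mathbf{U}_j^{[l]})^{\mathrm{H}}$. Combining linearity of trace with the cyclic identity in Eq.~\eqref{eq:trace commutativity} and the tube-orthonormality $(\mathbf{U}_j^{[l]})^{\mathrm{H}}\star\mathbf{U}_j^{[l]}=\mathcal{I}_{11p}$, each rank-one summand contributes $\mathrm{Tr}(\mathcal{I}_{11p})=1$, so $\mathrm{Tr}(f(\mathcal{C}))=\sum_{j,l} f(s_{jjl})$.

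Next, since both $\{\mathbf{U}_j^{[l]}\}$ and $\{\mathbf{V}_i^{[k]}\}$ are Frobenius-orthonormal bases of $\mathbb{C}^{m\times p}$, I expand $\mathbf{V}_i^{[k]}=\sum_{j,l}\alpha_{i,k}^{j,l}\,\mathbf{U}_j^{[l]}$. The change-of-basis matrix $(\alpha_{i,k}^{j,l})$ is unitary, so both $\sum_{j,l}|\alpha_{i,k}^{j,l}|^2=1$ for every $(i,k)$ and $\sum_{i,k}|\alpha_{i,k}^{j,l}|^2=1$ for every $(j,l)$. Using the eigen-equation $\mathcal{C}\star\mathbf{U}_j^{[l]}=s_{jjl}\mathbf{U}_j^{[l]}$ and Frobenius orthonormality, the quadratic form collapses to a convex combination:
\begin{equation*}
\left\langle \mathbf{V}_i^{[k]},\,\mathcal{C}\star\mathbf{V}_i^{[k]}\right\rangle \;=\; \sum_{j,l} s_{jjl}\,|\alpha_{i,k}^{j,l}|^2.
\end{equation*}
Scalar Jensen then gives $f\bigl(\sum_{j,l}s_{jjl}|\alpha_{i,k}^{j,l}|^2\bigr)\leq\sum_{j,l}|\alpha_{i,k}^{j,l}|^2 f(s_{jjl})$. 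Summing over $(i,k)$, swapping the order of summation, and invoking the column-sum identity delivers $\sum_{i,k} f\bigl(\langle \mathbf{V}_i^{[k]},\mathcal{C}\star\mathbf{V}_i^{[k]}\rangle\bigr)\leq\sum_{j,l} f(s_{jjl})=\mathrm{Tr}(f(\mathcal{C}))$. For the equality clause, if each $\mathbf{V}_i^{[k]}$ is itself an eigenmatrix then exactly one coefficient $\alpha_{i,k}^{j,l}$ is nonzero (in modulus), rendering Jensen an equality; conversely, strict convexity of $f$ forces the convex combination to concentrate on a single index, so $\mathbf{V}_i^{[k]}$ must be an eigenmatrix.

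The main obstacle I anticipate is the inner-product reduction in step (ii): namely, justifying rigorously that the eigenmatrices $\mathbf{U}_j^{[l]}$ (from the T-SVD together with their cyclic lateral-slice permutations) form a genuine Frobenius-orthonormal basis that simultaneously diagonalizes the linear map $\mathbf{X}\mapsto\mathcal{C}\star\mathbf{X}$ on $\mathbb{C}^{m\times p}$ with scalar eigenvalues $s_{jjl}$. Establishing this cleanly relies on the block-circulant representation $\mbox{bcirc}(\mathcal{C})$, whose block-diagonalization under the discrete Fourier transform along the third tensor dimension produces precisely the $mp$ scalar eigenvalues with matching orthonormal eigenmatrices; once this correspondence is in hand, the remaining steps are the standard Peierls argument.
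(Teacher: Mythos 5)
Your proposal is correct and follows essentially the same route as the paper's proof: both rest on the spectral expansion of Eq.~\eqref{eq:spectral expression}, the observation that the squared overlaps of each $\mathbf{V}_i^{[k]}$ with the eigenmatrices $\mathbf{U}_{j}^{[l]}$ form a (doubly) stochastic array so that $\left\langle \mathbf{V}_i^{[k]}, \mathcal{C}\star\mathbf{V}_i^{[k]}\right\rangle$ is a convex combination of the eigenvalues, and a term-by-term application of scalar Jensen, with equality analyzed by concentration of the weights. The only cosmetic difference is direction: the paper expands $\mathrm{Tr}(f(\mathcal{C}))$ in the $\mathbf{V}$-basis and applies Jensen downward, whereas you first identify $\mathrm{Tr}(f(\mathcal{C}))=\sum_{j,l}f(s_{jjl})$ and then sum the Jensen bounds upward using the column-sum identity, which if anything makes the use of double stochasticity more explicit than in the paper.
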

\textbf{Proof:}
From spectral representation by Eq.~\eqref{eq:spectral expression}, we have 
\begin{eqnarray}
\mathrm{Tr} \left( f ( \mathcal{C} ) \right) &= & \sum\limits_{i=1}^{m} \sum\limits_{k=0}^{ p -1}
\left\langle  \mathbf{V}_i^{[k]} ,    \sum\limits_{i'=1}^{m} \sum\limits_{k'=0}^{ p -1} f( s_{i'i'k'}) \left(  \mathbf{U}_{i'}^{[k']} \star \left(\mathbf{U}_{i'}^{[k']} \right)^{\mathrm{T}} \right)      \star \mathbf{V}_i^{[k]}            \right\rangle \nonumber \\
&=&  \sum\limits_{i=1}^{m} \sum\limits_{k=0}^{ p -1}  \sum\limits_{i'=1}^{m} \sum\limits_{k'=0}^{ p -1}  f( s_{i'i'k'}) \left\Vert   \left(  \mathbf{U}_{i'}^{[k']} \star \left(\mathbf{U}_{i'}^{[k']} \right)^{\mathrm{T}} \right)      \star  \mathbf{V}_i^{[k]}  \right\Vert^2 \nonumber \\
& \geq &  \sum\limits_{i=1}^{m} \sum\limits_{k=0}^{ p -1} f \left(  \sum\limits_{i'=1}^{m} \sum\limits_{k'=0}^{ p -1}  s_{i'i'k'}  \left\langle  \mathbf{V}_{i}^{[k]},  \left(  \mathbf{U}_{i'}^{[k']} \star \left(\mathbf{U}_{i'}^{[k']} \right)^{\mathrm{T}} \right)    \star  \mathbf{V}_{i}^{[k]} \right\rangle    \right)  \nonumber \\
& = &  \sum\limits_{i=1}^{m} \sum\limits_{k=0}^{ p -1} f \left( \left\langle \mathbf{V}_{i}^{[k]}  ,  \mathcal{C} \star \mathbf{V}_{i}^{[k]} \right\rangle \right), 
\end{eqnarray}
where the only inequality comes from the convexity of the function $f$. Since for each $i, k$, we have $ \sum\limits_{i'=1}^{m} \sum\limits_{k'=0}^{ p -1}  \left\Vert   \left(  \mathbf{U}_{i'}^{[k']} \star \left(\mathbf{U}_{i'}^{[k']} \right)^{\mathrm{T}} \right)      \star  \mathbf{V}_i^{[k]}  \right\Vert^2 =  \left \Vert  \mathbf{V}_i^{[k]}  \right\Vert^2 = 1$. Note that each $ \mathbf{V}_i^{[k]} $ is an eigenmatrix of $\mathcal{C}$ if and only if $  \left\Vert   \left(  \mathbf{U}_{i'}^{[k']} \star \left(\mathbf{U}_{i'}^{[k']} \right)^{\mathrm{T}} \right)      \star  \mathbf{V}_i^{[k]}  \right\Vert^2   = 1$ for some $i', k'$, and is 0 otherwise, in which case the inequality in Eq.~\eqref{eq1:lma:Peierls Inequality} is eqaulity. When $f$ is strictly convex, equality in Eq.~\eqref{eq1:lma:Peierls Inequality} can be true only if for each $i, k$, we have  $  \left\Vert   \left(  \mathbf{U}_{i'}^{[k']} \star \left(\mathbf{U}_{i'}^{[k']} \right)^{\mathrm{T}} \right)      \star  \mathbf{V}_i^{[k]}  \right\Vert^2   = 1$ for some $i', k'$, and is 0 otherwise. 
$\hfill \Box$

From Lemma~\ref{lma:monotonicity of trace func} and Lemma~\ref{lma:Peierls Inequality}, we have the following theorem about convexity and monotonicity of a trace function.  We recall~\cref{thm:convexity and monotonicity of a trace func intro}.

\ThmConvexityMonotonicityTraceFunc*
%
\textbf{Proof:}
Given two $\mathcal{C}, \mathcal{D} \in \mathbb{C}^{m \times m \times p}$ Hermitian T-product tensors, $f$ as a convex function, and $\mathbf{V}_i^{[k]}$ for $1 \leq i \leq m$ and $0 \leq k \leq p-1$ be an orthonormal basis of $\mathbb{C}^{m \times p}$ consisting of eigenmatrices of $\frac{\mathcal{C} + \mathcal{D}}{2}$. Then, from Lemma~\ref{lma:Peierls Inequality}, we have
\begin{eqnarray}\label{eq1:thm:convexity and monotonicity of a trace func}
\mathrm{Tr}\left(f ( \frac{\mathcal{C} + \mathcal{D}}{2}  ) \right) & =  & 
\sum\limits_{i=1}^{m} \sum\limits_{k=0}^{ p -1} f \left( \left\langle \mathbf{V}_i^{[k]} , \frac{\mathcal{C} + \mathcal{D}}{2} \star \mathbf{V}_i^{[k]} \right\rangle \right) \nonumber \\
&=& \sum\limits_{i=1}^{m} \sum\limits_{k=0}^{ p -1} f \left( \frac{1}{2} \left\langle \mathbf{V}_i^{[k]}, \mathcal{C} \star \mathbf{V}_i^{[k]}\right\rangle +
\frac{1}{2} \left\langle \mathbf{V}_i^{[k]}, \mathcal{D} \star \mathbf{V}_i^{[k]}\right\rangle \right) \nonumber \\
& \leq &  \sum\limits_{i=1}^{m} \sum\limits_{k=0}^{ p -1}  \left( \frac{1}{2} f \left( \left\langle \mathbf{V}_i^{[k]}, \mathcal{C} \star \mathbf{V}_i^{[k]}\right\rangle \right) +
\frac{1}{2} f \left(\left\langle \mathbf{V}_i^{[k]}, \mathcal{D} \star \mathbf{V}_i^{[k]}\right\rangle  \right) \right) \\
& \leq &  \frac{1}{2} \mathrm{Tr} \left( f \left( \mathcal{C} \right)\right) + \frac{1}{2} \mathrm{Tr} \left( f \left( \mathcal{D} \right)\right)
\end{eqnarray}
where inequalities come from Lemma~\ref{lma:Peierls Inequality}. This demonstrates that the map $\mathcal{C} \rightarrow \mathrm{Tr}\left(f \left( \mathcal{C} \right) \right) $ is midpoint convex. 

For the strict convexity of $f$ and $\mathrm{Tr}\left(f ( \frac{\mathcal{C} + \mathcal{D}}{2}  ) \right)  =  \frac{1}{2} \mathrm{Tr} \left( f \left( \mathcal{C} \right)\right) + \frac{1}{2} \mathrm{Tr} \left( f \left( \mathcal{D} \right)\right) $, we have $\left\langle \mathbf{V}_i^{[k]}, \mathcal{C} \star \mathbf{V}_i^{[k]} \right\rangle =\left\langle \mathbf{V}_i^{[k]}, \mathcal{D} \star \mathbf{V}_i^{[k]} \right\rangle  $ for each $\mathbf{V}_i^{[k]} $. From  Lemma~\ref{lma:Peierls Inequality}, the equality will be true when  $\mathbf{V}_i^{[k]} $ are eigenmatrices for both tensors $\mathcal{C}$ and $\mathcal{D}$. Then, we have
\begin{eqnarray}
\mathcal{C} \star \mathbf{V}_i^{[k]}  = \langle  \mathbf{V}_i^{[k]}  ,  \mathcal{C} \star \mathbf{V}_i^{[k]}\rangle \mathbf{V}_i^{[k]} = \langle \mathbf{V}_i^{[k]} , \mathcal{D} \star \mathbf{V}_i^{[k]}\rangle \mathbf{V}_i^{[k]} = \mathcal{D} \star \mathbf{V}_i^{[k]}, 
\end{eqnarray}
which indicates that $\mathcal{C} = \mathcal{D}$. An obvious continuity argument now shows that if $f$ continuous as well as convex, $\mathcal{C} \rightarrow \mathrm{Tr} \left( f ( \mathcal{C})\right)$ is convex, and strictly convex so if $f$ is strictly convex. Therefore, this Theorem is proved from Lemma~\ref{lma:monotonicity of trace func} and above arguments. 
$\hfill \Box$

From T-SVD, we have following relation for Hermitian T-product tensor:
\begin{eqnarray}\label{eq:transfer rule}
f(s) \leq g(s)~~ \mbox{for $s \in [a, b]$} \Longrightarrow f(\mathcal{C}) \preceq  g(\mathcal{C}) \mbox{~~when the eigenvalues of $\mathcal{C}$ lie in $[a, b]$.}
\end{eqnarray}
Above Eq.~\eqref{eq:transfer rule} is named as transfer rule. 

We have defined tensor exponential under Definition~\ref{def: tensor exponential}, and the exponential of an Hermitian T-product tensor is always TPD due to the spectral mapping Lemma~\ref{lma:spectral mapping}. From transfer rule Eq.~\eqref{eq:transfer rule}, the tensor exponential satisfies following relations for a Hermitian T-product tensor $\mathcal{C} \in \mathbb{C}^{m \times m \times p}$ that we will use at later theory development:
\begin{eqnarray}\label{eq:1 + a leq e a}
\mathcal{I}_{mmp} + \mathcal{C} \preceq \exp (\mathcal{C}),
\end{eqnarray}
and
\begin{eqnarray}\label{eq:cosh a leq e a2/2}
\cosh ( \mathcal{C} )  \preceq \exp (\mathcal{C}^2 / 2).
\end{eqnarray}

From Theorem~\ref{thm:convexity and monotonicity of a trace func intro} and the monotonicity of the exponential function, we have 
\begin{eqnarray}\label{eq:trace exp function}
\mathcal{C} \preceq \mathcal{D} \Longrightarrow \mathrm{Tr} \exp (\mathcal{C}  ) \leq  \mathrm{Tr} \exp (\mathcal{D})
\end{eqnarray}

Below, we want to prove the monotonicity and the concavity of the logarithm function. We will begin definitions about T-product tensor monotonicity and convexity first and present several lemmas used to establish the monotonicity and the concavity of the logarithm function. Given two Hermitian T-product tensors $\mathcal{C}, \mathcal{D} \in \mathbb{C}^{m \times m \times p}$, a function $f : \mathbb{R} \rightarrow \mathbb{R}$ is said to be T-product tensor monotone if the following relation holds:
\begin{eqnarray}\label{eq:T tensor monotone}
\mathcal{C} \preceq \mathcal{D} \Longrightarrow f(\mathcal{C}) \preceq f (\mathcal{D}).
\end{eqnarray}
A function $f$ is said as a \emph{T-product tensor convex} function if we have:
\begin{eqnarray}\label{eq:T tensor convex}
f(t \mathcal{C} + (1-t) \mathcal{D}) \preceq  t f(\mathcal{C}) + (1-t) f(\mathcal{D}) (\mathcal{D}),
\end{eqnarray}
where $0 \leq t \leq 1$. Also, a function $f$ is said as a \emph{T-product tensor concave} function if $-f$ is a \emph{T-product tensor convex} function. The following derivation about the monotonicity and the concavity of the logarithm function is extended from matrices according to works in~\cite{hiai2010matrix} and~\cite{chansangiam2015survey} to T-product tensors.

\begin{lemma}\label{lma:1.5.7 Hiai paper 2010}
For any $\mathcal{C}, \mathcal{D} \in \mathbb{C}^{m \times m \times p}$, we have 
$\sigma(\mathcal{C} \star \mathcal{D}) = \sigma(\mathcal{D} \star \mathcal{C})$. 
\end{lemma}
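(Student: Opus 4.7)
The plan is to translate the identity to a statement about ordinary matrices through the \mbox{bcirc} operator and then invoke the classical matrix analogue of the lemma. The key algebraic fact to exploit is that $\mbox{bcirc}$ is a ring homomorphism from the algebra of $m\times m\times p$ tensors under $\star$ to the algebra of $mp\times mp$ block circulant matrices under ordinary multiplication; in particular,
\begin{eqnarray*}
\mbox{bcirc}(\mathcal{C}\star\mathcal{D}) \;=\; \mbox{bcirc}(\mathcal{C})\cdot\mbox{bcirc}(\mathcal{D}),
\qquad
\mbox{bcirc}(\mathcal{D}\star\mathcal{C}) \;=\; \mbox{bcirc}(\mathcal{D})\cdot\mbox{bcirc}(\mathcal{C}).
\end{eqnarray*}

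First, I would show that the tensor spectrum $\sigma(\cdot)$ defined via the nonzero entries of the f\,-diagonal core $\mathcal{S}$ in the T\,-SVD agrees with the matrix spectrum of $\mbox{bcirc}(\cdot)$. This is done by taking the T\,-SVD $\mathcal{C} = \mathcal{U}\star\mathcal{S}\star\mathcal{V}^{\mathrm{T}}$, applying $\mbox{bcirc}$, and using that $\mbox{bcirc}(\mathcal{U})$ and $\mbox{bcirc}(\mathcal{V})$ are orthogonal matrices (because $\mathcal{U}^{\mathrm{T}}\star\mathcal{U}=\mathcal{I}_{mmp}$), so that $\mbox{bcirc}(\mathcal{C})$ is unitarily equivalent to $\mbox{bcirc}(\mathcal{S})$; a block-diagonalization of the block-circulant $\mbox{bcirc}(\mathcal{S})$ through the (block) DFT then identifies its spectrum with the tube entries of $\mathcal{S}$, i.e.\ with $\sigma(\mathcal{C})$.

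Second, I would apply the classical matrix identity $\sigma(AB) = \sigma(BA)$, valid for square matrices of equal dimension, to $A = \mbox{bcirc}(\mathcal{C})$ and $B = \mbox{bcirc}(\mathcal{D})$. Combining with the homomorphism property in the first display and with Step 1 gives
\begin{eqnarray*}
\sigma(\mathcal{C}\star\mathcal{D}) \;=\; \sigma\bigl(\mbox{bcirc}(\mathcal{C})\cdot\mbox{bcirc}(\mathcal{D})\bigr) \;=\; \sigma\bigl(\mbox{bcirc}(\mathcal{D})\cdot\mbox{bcirc}(\mathcal{C})\bigr) \;=\; \sigma(\mathcal{D}\star\mathcal{C}),
\end{eqnarray*}
which is the claim.

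The main obstacle I anticipate is the careful bookkeeping in Step 1: the paper defines $\sigma(\mathcal{C})$ through the T\,-SVD core $\mathcal{S}$ rather than through $\mbox{bcirc}$, so one has to verify (and cleanly state) that these two notions coincide as multisets, including an argument for zero entries if one wants equality of spectra with multiplicity. Once that correspondence is pinned down the lemma reduces to a one-line invocation of the matrix fact, so the entire difficulty of the proof is concentrated in establishing the tensor-to-matrix spectral dictionary.
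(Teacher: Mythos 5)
Your proposal is correct in substance but takes a genuinely different route from the paper. The paper stays entirely inside the tensor algebra: it shows $\mathrm{det}(\lambda\mathcal{I}_{mmp}-\mathcal{C}\star\mathcal{D})=\mathrm{det}(\lambda\mathcal{I}_{mmp}-\mathcal{D}\star\mathcal{C})$ by conjugating with $\mathcal{C}^{-1}$ and invoking the multiplicativity of its T-product determinant (Eq.~\eqref{eq:homo of T-tensors det}), then removes the invertibility assumption on $\mathcal{C}$ by perturbing to $\mathcal{C}_n=\mathcal{C}-\epsilon_n\mathcal{I}_{mmp}$ and passing to the limit. You instead push everything through $\mbox{bcirc}$, use the homomorphism property $\mbox{bcirc}(\mathcal{C}\star\mathcal{D})=\mbox{bcirc}(\mathcal{C})\cdot\mbox{bcirc}(\mathcal{D})$, and quote the classical fact $\sigma(AB)=\sigma(BA)$ for square matrices. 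Your route buys a cleaner foundation: the paper's determinant for T-product tensors is only loosely defined (as an ``$m$-linear alternating form'' on lateral slices) and its multiplicativity is asserted rather than proved, whereas the block-circulant homomorphism is the defining property of the T-product and the matrix identity $\sigma(AB)=\sigma(BA)$ is standard. The paper's route avoids having to build the tensor-to-matrix spectral dictionary you isolate as Step 1.

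One caveat on that Step 1, which you partly anticipate: the paper literally defines $\sigma(\mathcal{C})$ as the nonzero entries of the T-SVD core $\mathcal{S}$, which for a general (non-Hermitian) tensor records the \emph{singular values} of $\mbox{bcirc}(\mathcal{C})$, not its eigenvalues. With that literal reading the lemma is false (singular values of $AB$ and $BA$ differ in general), and your Step~1 identification of the T-SVD core with the eigenvalue spectrum of $\mbox{bcirc}(\mathcal{C})$ would not go through for arbitrary $\mathcal{C}\star\mathcal{D}$. The paper's own proof silently switches to the eigenvalue interpretation (roots of the characteristic polynomial), and that is the reading needed downstream, e.g.\ in Lemma~\ref{lma:2.5.1 Hiai paper 2010} where the lemma is applied to the Hermitian pair $\mathcal{C}^{\mathrm{H}}\star\mathcal{C}$ and $\mathcal{C}\star\mathcal{C}^{\mathrm{H}}$, for which the two notions coincide. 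So in a final write-up you should simply declare $\sigma(\mathcal{X}):=\sigma(\mbox{bcirc}(\mathcal{X}))$ (eigenvalues) at the outset and drop the T-SVD detour; with that convention your argument is a complete and arguably tighter proof than the one in the paper.
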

\textbf{Proof:}
Since eigenvalues are roots of the characteristic polynomial, it is enough to show that $\mathrm{det} (\lambda \mathcal{I}_{mmp} - \mathcal{C} \star \mathcal{D}) = 
\mathrm{det} (\lambda \mathcal{I}_{mmp} - \mathcal{D} \star \mathcal{C}) $. We first assume that $\mathcal{C}$ has inverse, then from Eq.~\eqref{eq:homo of T-tensors det}, we have 
\begin{eqnarray}
\mathrm{det} (\lambda \mathcal{I}_{mmp} - \mathcal{C} \star \mathcal{D} )
= \mathrm{det} (\mathcal{C}^{-1} \star  (\lambda \mathcal{I}_{mmp} - \mathcal{C} \star \mathcal{D} ) \star \mathcal{C} )
= \mathrm{det} (\lambda \mathcal{I}_{mmp} - \mathcal{D} \star \mathcal{C} ).
\end{eqnarray}
This shows that $\sigma(\mathcal{C} \star \mathcal{D}) = \sigma(\mathcal{D} \star \mathcal{C})$. 

If $\mathcal{C}$ is not invertible, we choose a sequence $\{\epsilon_n \}$ in $\mathbb{C} \backslash \sigma(\mathcal{C})$ with $\epsilon_n \rightarrow 0$, with property that all new tensors $\mathcal{C}_n \define \mathcal{C} - \epsilon_n \mathcal{I}_{mmp}$ are invertibale for each $n$. Then, 
\begin{eqnarray}
\mathrm{det} ( \lambda \mathcal{I}_{mmp} - \mathcal{C} \star \mathcal{D} ) & = &
\lim\limits_{n \rightarrow \infty} \mathrm{det} (  \lambda \mathcal{I}_{mmp} - \mathcal{C}_n \star \mathcal{D} ) = \lim\limits_{n \rightarrow \infty} \mathrm{det} (  \lambda \mathcal{I}_{mmp} -  \mathcal{D}  \star \mathcal{C}_n ) \nonumber \\\
& = & \mathrm{det} (  \lambda \mathcal{I}_{mmp} - \mathcal{D} \star \mathcal{C} ). 
\end{eqnarray}
$\hfill \Box$

\begin{lemma}\label{lma:2.5.1 Hiai paper 2010}
For every tensor $\mathcal{C} \in  \mathbb{C}^{m \times m \times p}$ and every function $f$ on $\sigma(\mathcal{C}^{\mathrm{H}}  \star \mathcal{C} )$, we have 
\begin{eqnarray}
\mathcal{C} \star f (  \mathcal{C}^{\mathrm{H}}  \star \mathcal{C} )
&=& f (  \mathcal{C}  \star \mathcal{C}^{\mathrm{H}} ) \star \mathcal{C}. 
\end{eqnarray}
\end{lemma}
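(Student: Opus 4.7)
The plan is to reduce the identity to an obvious algebraic statement for polynomials and then pass to general $f$ by a spectral approximation argument. Throughout I will rely on the associativity of the T-product (which follows immediately from the fact that $\mbox{bcirc}(\mathcal{A}\star\mathcal{B})=\mbox{bcirc}(\mathcal{A})\mbox{bcirc}(\mathcal{B})$, reducing T-product identities to ordinary matrix identities) together with the preceding Lemma~\ref{lma:1.5.7 Hiai paper 2010}, which guarantees $\sigma(\mathcal{C}^{\mathrm{H}}\star\mathcal{C})=\sigma(\mathcal{C}\star\mathcal{C}^{\mathrm{H}})$, so that $f$ is simultaneously defined on both spectra.

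First I would establish the polynomial case. By a short induction on $k\ge 0$, using associativity of $\star$, I would verify
\begin{eqnarray}
\mathcal{C}\star(\mathcal{C}^{\mathrm{H}}\star\mathcal{C})^{k}=(\mathcal{C}\star\mathcal{C}^{\mathrm{H}})^{k}\star\mathcal{C}.
\end{eqnarray}
The base case $k=0$ is the identity $\mathcal{C}\star\mathcal{I}_{mmp}=\mathcal{I}_{mmp}\star\mathcal{C}$, and the inductive step follows by inserting an extra factor of $\mathcal{C}^{\mathrm{H}}\star\mathcal{C}$ on the right and rebracketing as $\mathcal{C}\star\mathcal{C}^{\mathrm{H}}\star\mathcal{C}$. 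By $\mathbb{C}$-linearity, this gives $\mathcal{C}\star p(\mathcal{C}^{\mathrm{H}}\star\mathcal{C})=p(\mathcal{C}\star\mathcal{C}^{\mathrm{H}})\star\mathcal{C}$ for every polynomial $p$.

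Next, I would pass from polynomials to general functions using Lemma~\ref{lma:spectral mapping} together with the coincidence of spectra. Because $\mathcal{C}^{\mathrm{H}}\star\mathcal{C}$ and $\mathcal{C}\star\mathcal{C}^{\mathrm{H}}$ are Hermitian with a common finite spectrum, $f(\mathcal{C}^{\mathrm{H}}\star\mathcal{C})$ and $f(\mathcal{C}\star\mathcal{C}^{\mathrm{H}})$ depend only on the values of $f$ on this common finite set. Hence I can choose a polynomial $p$ that agrees with $f$ on $\sigma(\mathcal{C}^{\mathrm{H}}\star\mathcal{C})=\sigma(\mathcal{C}\star\mathcal{C}^{\mathrm{H}})$ (interpolation on finitely many points), so by the spectral mapping lemma
\begin{eqnarray}
f(\mathcal{C}^{\mathrm{H}}\star\mathcal{C})=p(\mathcal{C}^{\mathrm{H}}\star\mathcal{C}),\qquad f(\mathcal{C}\star\mathcal{C}^{\mathrm{H}})=p(\mathcal{C}\star\mathcal{C}^{\mathrm{H}}).
\end{eqnarray}
Combining this with the polynomial identity established above yields the claim.

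The only point that requires a bit of care is the use of the spectral mapping lemma for $\mathcal{C}\star\mathcal{C}^{\mathrm{H}}$ when $f$ is only assumed to be defined on $\sigma(\mathcal{C}^{\mathrm{H}}\star\mathcal{C})$; this is exactly where Lemma~\ref{lma:1.5.7 Hiai paper 2010} is essential, because the two spectra coincide as sets (with multiplicity, via the characteristic polynomial argument), so interpolating on one automatically interpolates on the other. I expect this to be the main (but still mild) obstacle, as everything else is a routine associativity-plus-induction calculation followed by a polynomial approximation that is exact because the spectra are finite.
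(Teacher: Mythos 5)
Your proposal is correct and follows essentially the same route as the paper's own proof: establish $\mathcal{C}\star(\mathcal{C}^{\mathrm{H}}\star\mathcal{C})^{k}=(\mathcal{C}\star\mathcal{C}^{\mathrm{H}})^{k}\star\mathcal{C}$ for powers (hence polynomials), then pass to general $f$ via Lagrange interpolation on the common spectrum guaranteed by Lemma~\ref{lma:1.5.7 Hiai paper 2010}. The only difference is that you spell out the induction and the role of the spectral coincidence more explicitly than the paper does.
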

\textbf{Proof:}
Since $\sigma( \mathcal{C}^{\mathrm{H}} \star \mathcal{C}) = \sigma(\mathcal{C} \star \mathcal{C}^{\mathrm{H}}  )$ from Lemma~\ref{lma:1.5.7 Hiai paper 2010} and $\mathcal{C} \star \left( \mathcal{C}^{\mathrm{H}}  \star \mathcal{C} \right)^n = \left( \mathcal{C} \star \mathcal{C}^{\mathrm{H}}  \right)^n \star \mathcal{C}  $, for $n \in \mathbb{N}$, this lemma is hold for $f$ is a polynomial. If the function $f$ is an arbitary function on $\sigma(  \mathcal{C}^{\mathrm{H}} \star \mathcal{C} )= [s_1, \cdots s_n]$, we define the Lagrance interpolation polynomial as 
\begin{eqnarray}
p(x) \define \sum\limits_{i=1}^{n} f(s_i) \prod\limits_{1 \leq j \leq n, i \neq j} \frac{x - s_j}{s_i - s_j,}
\end{eqnarray}
where we have $p(s_i) = f(s_i)$ for $1 \leq i \leq n$. Then, we also have 
\begin{eqnarray}
\mathcal{C} \star f (  \mathcal{C}^{\mathrm{H}}  \star \mathcal{C} )
=  \mathcal{C} \star p (  \mathcal{C}^{\mathrm{H}}  \star \mathcal{C} ) =  p (  \mathcal{C} \star \mathcal{C}^{\mathrm{H}}  ) \star \mathcal{C} = f (  \mathcal{C}  \star \mathcal{C}^{\mathrm{H}} ) \star \mathcal{C},
\end{eqnarray}
and this  Lemma is proved. $\hfill \Box$

Following Lemma is adopted from Corollary 12 from~\cite{chansangiam2015survey}. 
\begin{lemma}\label{lma:cor 12 from operator survey}
We have following equivalent statements about a function $f(x): (0, \infty) \rightarrow (0, \infty)$:
\begin{enumerate}
\item $f(x)$ is T-product tensor monotone function; 
\item $x/f(x)$ is T-product tensor monotone function;
\item $f(x)$ is T-product tensor concave function;
\item $1/f(x)$ is T-product tensor convex function. 
\end{enumerate}
\end{lemma}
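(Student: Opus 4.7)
The plan is to transfer Corollary~12 of~\cite{chansangiam2015survey} from the operator setting to T-product tensors by exploiting the block-circulant embedding $\mathrm{bcirc}:\mathbb{C}^{m\times m\times p}\to\mathbb{C}^{mp\times mp}$. I would first establish three structural facts that make this transfer automatic: (a) by Eqs.~\eqref{eq:Hermitian Transpose Def} and~\eqref{eq:T-product def}, $\mathrm{bcirc}$ is a $*$-algebra homomorphism, intertwining the T-product $\star$ with ordinary matrix multiplication and $\mathcal{C}^{\mathrm{H}}$ with the ordinary conjugate transpose; (b) by Lemma~\ref{lma:spectral mapping}, $\mathrm{bcirc}$ commutes with continuous functional calculus, so $\mathrm{bcirc}(f(\mathcal{C}))=f(\mathrm{bcirc}(\mathcal{C}))$; and (c) the TPSD order is compatible with the L\"owner order under $\mathrm{bcirc}$, i.e.\ $\mathcal{C}\preceq\mathcal{D}$ iff $\mathrm{bcirc}(\mathcal{C})\preceq\mathrm{bcirc}(\mathcal{D})$ as Hermitian $mp\times mp$ matrices. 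Step (c) uses the $\mathrm{cunfold}$ map of Eq.~\eqref{eq:columns unfold} to pass between the T-quadratic form $F_\mathcal{C}(\mathbf{X})$ and an ordinary quadratic form on $\mathbb{C}^{mp}$.

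With (a)--(c) in hand, the definitions of T-product tensor monotone (Eq.~\eqref{eq:T tensor monotone}) and T-product tensor convex/concave (Eq.~\eqref{eq:T tensor convex}) translate verbatim into the corresponding operator-theoretic properties of the functional calculus of $f$ restricted to the block-circulant subalgebra. Because block-circulant Hermitian matrices form a real-linear subspace closed under sums, products with block-circulant operators, and continuous functional calculus, and because every T-product tensor pair $(\mathcal{C},\mathcal{D})$ arises from such a pair of block-circulant matrices, each of the four conditions for $f$ in the lemma is equivalent to the corresponding condition on ordinary Hermitian matrices of size $mp\times mp$.

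Finally, I would invoke the classical chain of equivalences for positive functions $f:(0,\infty)\to(0,\infty)$: (1)$\Leftrightarrow$(3) is the Hansen--Pedersen/L\"owner theorem stating that positive operator monotone functions on $(0,\infty)$ coincide with positive operator concave functions; (1)$\Leftrightarrow$(2) follows from the involution $f(x)\mapsto x/f(x)$, which preserves the Pick class of positive operator monotone functions on $(0,\infty)$; and (3)$\Leftrightarrow$(4) uses that for positive $f$, operator concavity of $f$ is equivalent to operator convexity of $1/f$, via the operator convexity of the inversion map $A\mapsto A^{-1}$ on the positive cone combined with (1)$\Leftrightarrow$(3). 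The main obstacle is a careful verification of the order-isomorphism (c), since TPD/TPSD is defined using matrix-valued test inputs $\mathbf{X}\in\mathbb{C}^{m\times p}$ with the vector-valued form $F_\mathcal{C}(\mathbf{X})\ge\mathbf{0}$, whereas the L\"owner order on $\mathrm{bcirc}(\mathcal{C})$ uses scalar-valued forms on $\mathbb{C}^{mp}$; checking that these two notions coincide under $\mathrm{cunfold}$ is the only nontrivial analytic content. Once that identification is secured, the lemma is an immediate corollary of the matrix version, with none of the deep Pick/L\"owner integral-representation machinery needing to be redeveloped.
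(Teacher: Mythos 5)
Your proposal is correct in outline but takes a genuinely different route from the paper. The paper does not reduce to the matrix case: it re-runs the operator-theoretic argument of Corollary~12 of~\cite{chansangiam2015survey} \emph{inside} the T-product algebra, replacing matrix multiplication by $\star$ throughout and using Lemma~\ref{lma:2.5.1 Hiai paper 2010} (the T-product analogue of $Cf(C^{\mathrm{H}}C)=f(CC^{\mathrm{H}})C$) together with the transfer rule~\eqref{eq:transfer rule} as the substitutes for Theorems~2.5.2--2.5.3 of~\cite{hiai2010matrix}. You instead push everything through the block-circulant embedding $\mbox{bcirc}$ and quote the classical matrix equivalences verbatim. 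Your route buys a cleaner division of labor --- none of the L\"owner/Pick machinery has to be re-derived for T-products, only the three structural facts (a)--(c) --- and you correctly identify the compatibility of the TPSD order with the L\"owner order on $\mbox{bcirc}(\mathcal{C})$ as the one nontrivial verification (it does hold: the first component of $F_{\mathcal{C}}(\mathbf{X})$ is exactly the quadratic form of $\mbox{bcirc}(\mathcal{C})$ at $\mbox{cunfold}(\mathbf{X})$, and the remaining components are the same form at cyclic shifts, so elementwise nonnegativity of the tuple for all $\mathbf{X}$ is equivalent to positive semidefiniteness of $\mbox{bcirc}(\mathcal{C})$). One step you gloss over, however, is the direction you actually need to close the chain of equivalences: facts (a)--(c) show that each T-product condition is the restriction of the corresponding operator condition to the block-circulant subalgebra, which gives ``operator property $\Rightarrow$ T-product property'' for free, but to conclude, say, (1)$\Rightarrow$(3) for T-product tensors from the classical (1)$\Leftrightarrow$(3) you also need ``T-product monotone $\Rightarrow$ operator monotone.'' This is true but requires an extra observation: the tensor supported on its first frontal slice $\mathbf{C}^{(1)}=A$ has $\mbox{bcirc}$ equal to the block-diagonal matrix $\mathbf{I}_p\otimes A$, the order and functional calculus on such tensors reproduce those on arbitrary Hermitian $A$, and $m$ is arbitrary; adding this sentence makes your reduction complete. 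With that repaired, your argument is a sound and arguably more self-contained alternative to the paper's ``replace products by $\star$ and repeat'' proof.
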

\textbf{Proof:}
The proof is based in Corollary 12 from~\cite{chansangiam2015survey}. But those facts about using Theorem 2.5.2 and Theorem 2.5.3 from~\cite{hiai2010matrix} should be modified from matrices settings to T-product tensors settings. With help from Lemma~\ref{lma:2.5.1 Hiai paper 2010} and transfer rules provided by Eq.~\eqref{eq:transfer rule},  the proof about Theorem 2.5.2 and Theorem 2.5.3 from~\cite{hiai2010matrix} for T-product tensors is straightforward by replacing matrix multiplication opertions to T-product operations.  $\hfill \Box$

We are ready to prove that the logarithmic function is T-product tensor monotone and
concave function on $(0, \infty)$. 
\begin{lemma}\label{lma:log monotone and concave}
Given two TPD tensors $\mathcal{C}, \mathcal{D} \in \mathbb{C}^{m \times m \times p}$ with $\mathcal{O} \preceq \mathcal{C} \preceq \mathcal{D} $, we have 
\begin{eqnarray}
\log ( \mathcal{C}  ) \preceq \log ( \mathcal{D}  ),
\end{eqnarray}
and
\begin{eqnarray}
t  \log ( \mathcal{C}  ) + (1 - t) \log ( \mathcal{D}  ) \preceq \log (t \mathcal{C} + (1-t) \mathcal{D} ).
\end{eqnarray}
\end{lemma}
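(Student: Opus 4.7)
The plan is to derive both statements from the integral representation
\[
\log(\mathcal{C}) = \int_0^\infty \left(\frac{\mathcal{I}_{mmp}}{1+s} - (\mathcal{C} + s\mathcal{I}_{mmp})^{-1}\right) ds
\]
valid for every TPD tensor $\mathcal{C}$. This is the lift through the spectral mapping Lemma~\ref{lma:spectral mapping} of the scalar identity $\log(x) = \int_0^\infty \left(\frac{1}{1+s} - \frac{1}{x+s}\right) ds$ for $x > 0$, obtained by applying the scalar formula to each eigenvalue $s_{iik} > 0$ appearing in the spectral expansion Eq.~\eqref{eq:spectral expression}. With this representation in hand, both parts reduce to pointwise-in-$s$ inequalities about the map $\mathcal{C} \mapsto (\mathcal{C}+s\mathcal{I}_{mmp})^{-1}$.

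Two auxiliary facts about inversion on the TPD cone are the key ingredients. First, antimonotonicity: if $\mathcal{C}, \mathcal{D}$ are TPD with $\mathcal{C} \preceq \mathcal{D}$, then $\mathcal{D}^{-1} \preceq \mathcal{C}^{-1}$. Conjugating $\mathcal{C} \preceq \mathcal{D}$ by $\mathcal{C}^{-1/2}$ (available through Lemma~\ref{lma:spectral mapping} applied to $\sqrt{x}$) yields $\mathcal{I}_{mmp} \preceq \mathcal{C}^{-1/2} \star \mathcal{D} \star \mathcal{C}^{-1/2}$; the transfer rule Eq.~\eqref{eq:transfer rule} applied to $1/x \leq 1$ on $[1, \infty)$ then gives $\mathcal{C}^{1/2} \star \mathcal{D}^{-1} \star \mathcal{C}^{1/2} \preceq \mathcal{I}_{mmp}$, and conjugating by $\mathcal{C}^{-1/2}$ produces $\mathcal{D}^{-1} \preceq \mathcal{C}^{-1}$. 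Second, T-product tensor convexity of inversion: $(t\mathcal{C}+(1-t)\mathcal{D})^{-1} \preceq t\mathcal{C}^{-1} + (1-t)\mathcal{D}^{-1}$ for TPD $\mathcal{C}, \mathcal{D}$ and $t \in [0,1]$. This is immediate from Lemma~\ref{lma:cor 12 from operator survey} applied to the identity function $f(x) = x$, which is trivially a T-product tensor monotone map $(0, \infty) \to (0, \infty)$, so item (4) of that lemma delivers that $1/x$ is T-product tensor convex.

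For the monotonicity claim, antimonotonicity applied to $\mathcal{C} + s\mathcal{I}_{mmp} \preceq \mathcal{D} + s\mathcal{I}_{mmp}$ gives, equivalently,
\[
\frac{\mathcal{I}_{mmp}}{1+s} - (\mathcal{C}+s\mathcal{I}_{mmp})^{-1} \preceq \frac{\mathcal{I}_{mmp}}{1+s} - (\mathcal{D}+s\mathcal{I}_{mmp})^{-1}
\]
for each $s \geq 0$, and integrating this pointwise TPSD inequality over $s$ yields $\log(\mathcal{C}) \preceq \log(\mathcal{D})$. For the concavity claim, since $t\mathcal{C}+(1-t)\mathcal{D}+s\mathcal{I}_{mmp} = t(\mathcal{C}+s\mathcal{I}_{mmp})+(1-t)(\mathcal{D}+s\mathcal{I}_{mmp})$, inverse convexity gives
\[
(t\mathcal{C}+(1-t)\mathcal{D}+s\mathcal{I}_{mmp})^{-1} \preceq t(\mathcal{C}+s\mathcal{I}_{mmp})^{-1} + (1-t)(\mathcal{D}+s\mathcal{I}_{mmp})^{-1};
\]
subtracting both sides from $\frac{\mathcal{I}_{mmp}}{1+s}$ (which splits as $t\frac{\mathcal{I}_{mmp}}{1+s}+(1-t)\frac{\mathcal{I}_{mmp}}{1+s}$) flips the order, and integrating over $s$ produces $t\log(\mathcal{C})+(1-t)\log(\mathcal{D}) \preceq \log(t\mathcal{C}+(1-t)\mathcal{D})$. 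The main obstacle is to justify rigorously that the improper tensor-valued integral converges and preserves the TPSD ordering; since each integrand lives in the same f-diagonal basis Eq.~\eqref{eq:spectral expression} as its underlying TPD tensor, this reduces to scalar improper integration of $\frac{1}{1+s} - \frac{1}{\lambda+s}$ on each spectral channel with $\lambda > 0$, where absolute convergence and order preservation by integration are classical.
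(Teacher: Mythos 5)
Your proof is correct, but it takes a genuinely different route from the paper. The paper's proof is a two-line reduction: it applies Lemma~\ref{lma:cor 12 from operator survey} to $g(x)=x/\log(1+x)$ to conclude that $\log(1+x)$ is T-product tensor monotone and concave, and then removes the shift by writing $\log(\epsilon+x)=\log\epsilon+\log(1+x/\epsilon)$ and letting $\epsilon\to 0$. That argument leans on the assertion that $g$ is a T-product tensor monotone function, which the paper justifies only by its scalar monotonicity --- a step that is not automatic, since scalar monotonicity does not in general imply operator (or T-product tensor) monotonicity. Your route instead goes through the integral representation $\log(x)=\int_0^\infty\bigl(\tfrac{1}{1+s}-\tfrac{1}{x+s}\bigr)\,ds$, lifted by the spectral mapping Lemma~\ref{lma:spectral mapping}, and reduces both claims to antimonotonicity and convexity of inversion on the TPD cone; the first you derive from congruence plus the transfer rule Eq.~\eqref{eq:transfer rule}, and the second from the trivial direction of Lemma~\ref{lma:cor 12 from operator survey} (item (4) applied to $f(x)=x$). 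This is the classical L\"owner-style argument, and it buys you a proof whose only unstated ingredient is the easy congruence rule $\mathcal{A}\succeq\mathcal{O}\Rightarrow\mathcal{X}^{\mathrm{H}}\star\mathcal{A}\star\mathcal{X}\succeq\mathcal{O}$ (immediate from the quadratic-form definition of TPSD, and already used implicitly by the paper in Lemma~\ref{lma:perspective func}), together with standard facts about integrating over a closed cone. The price is length: you must set up the improper tensor-valued integral and check its convergence and order preservation, which you do correctly by noting the integrand decays like $s^{-2}$ on each spectral channel. On balance your argument is more self-contained and avoids the one genuinely delicate step in the paper's version.
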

\textbf{Proof:}
We define a function $g(x) = \frac{x }{\log (x + 1) }$ on $(0, \infty)$. Since $g(x)$ is the monotone function on $(0, \infty)$, Lemma~\ref{lma:cor 12 from operator survey} implies that $\log(1  + x)$ is T-product tensor monotone and concave on $(0, \infty)$. For each $\epsilon > 0$, $\log (\epsilon + x) = \log \epsilon + \log (1 + x/\epsilon)$ is T-product tensor  monotone and concave on $(0, \infty)$. Let $\epsilon \rightarrow 0$, we achive the desired result. $\hfill \Box$

In general, it is not practical to always working with Hermitian T-product tensor, we will apply dilations techqnique to expand any Ttensor into a Hermitian T-product tensor. For any tensor $\mathcal{C} \in \mathbb{C}^{m \times n \times p}$, a dilation for the tensor  $\mathcal{C}$, denoted as $\mathfrak{D}(\mathcal{C})$, will be
\begin{eqnarray}\label{eq:dilation T tensor}
\mathfrak{D}(\mathcal{C} )\define  \left[
    \begin{array}{cc}
     \mathcal{O} & \mathcal{C} \\
     \mathcal{C}^{\mathrm{H}} & \mathcal{O} \\
    \end{array}
\right],
\end{eqnarray}
where $\mathfrak{D}(\mathcal{C} ) \in  \mathbb{C}^{(m+n) \times (m+n) \times p}$ and we have $ \left( \mathfrak{D}(\mathcal{C} ) \right)^{\mathrm{H}} =   \mathfrak{D}(\mathcal{C} ) $ (Hermitian T-product tensor after dilation). Also, we have 
\begin{eqnarray}\label{eq:dilation T tensor 2}
\mathfrak{D}^2(\mathcal{C} )\define  \left[
    \begin{array}{cc}
     \mathcal{C} \star \mathcal{C}^{\mathrm{H}}  & \mathcal{O} \\
     \mathcal{O} &  \mathcal{C}^{\mathrm{H}} \star \mathcal{C} \\
    \end{array}
\right].
\end{eqnarray}
Since Eq.~\eqref{eq:dilation T tensor} is zero trace, the largest eigenvalue of $\mathfrak{D}(\mathcal{C} )$ will be the same with the largest singular of $\mathcal{C}$.

Since the expectation of a random T-product tensor can be considered as a convex combination, expectation preserves the TPSD order as:
\begin{eqnarray}\label{eq: exp keep TPSD order}
\mathcal{X} \preceq \mathcal{Y} ~~ \mbox{almost surely}~~ \Longrightarrow  \mathbb{E} \mathcal{X} \preceq  \mathbb{E} \mathcal{Y}. 
\end{eqnarray}
Also from Lemma~\ref{lma:cor 12 from operator survey}, we know that the quadratic function $f(x) = x^2$ is T-product tensor convex, thus, we have 
\begin{eqnarray}\label{eq: quadratic Jensen TPSD order}
\left( \mathbb{E} \mathcal{C} \right)^2 \preceq \mathbb{E} \left( \mathcal{C}^2 \right).
\end{eqnarray}

We will present one more theorem in this section about Golden-Thompson inequality for T-product tensors.  We recall~\cref{thm:GT Inequality for T-product Tensors intro}

\ThmGTInequalityTProduct*
\textbf{Proof:}
From T-SVD decomposition and Eq.~\eqref{eq:spectral expression}, we can express the tensor $\mathcal{C}$ as 
\begin{eqnarray}
\mathcal{C} = \sum\limits_{\lambda} \mathcal{P}_{\lambda},
\end{eqnarray}
where $\lambda$ are eigenvalues and $\mathcal{P}_{\lambda}$ are corresponding projectors (T-product tensors) which are mutually orthgonal. Given $\mathcal{X} \succeq \mathcal{O}$, we define following mapping with respect to the tensor $\mathcal{C}$ as :
\begin{eqnarray}\label{eq:pinching map}
\mathfrak{P}_{\mathcal{C}}(\mathcal{X}) : \mathcal{X} \rightarrow \sum\limits_{\lambda}
\mathcal{P}_{\lambda} \star \mathcal{X} \star \mathcal{P}_{\lambda}.
\end{eqnarray}
Then, we have following properties about mapping $\mathfrak{P}_{\mathcal{C}}(\mathcal{X})$
\begin{enumerate}\label{pinching properties}
\item  $\mathfrak{P}_{\mathcal{C}}(\mathcal{X})$ commutes with $\mathcal{C}$;
\item  $\mathrm{Tr}\left( \mathfrak{P}_{\mathcal{C}}(\mathcal{X}) \star \mathcal{C} \right) = \mathrm{Tr} \left( \mathcal{X} \star \mathcal{C} \right)$;
\item  $\mathfrak{P}_{\mathcal{C}}(\mathcal{X}) \succeq \frac{\mathcal{X}}{\left\vert sp(\mathcal{C}) \right\vert } $, where $sp(\mathcal{C}) = \left\{ \lambda_1, \lambda_2, \cdots, \lambda_{ \left\vert sp(\mathcal{C}) \right\vert}\right\}$. 
\end{enumerate}
The third property of $\mathfrak{P}_{\mathcal{C}}(\mathcal{X})$ is true due to the following relation:
\begin{eqnarray}
\mathfrak{P}_{\mathcal{C}}(\mathcal{X}) &=& \sum\limits_{\lambda \in sp(\mathcal{C}) }  \mathcal{P}_{\lambda} \star \mathcal{X} \star \mathcal{P}_{\lambda} \nonumber \\
&=& \frac{1}{  \left\vert sp(\mathcal{C}) \right\vert   } \sum\limits_{x=1}^{  \left\vert sp(\mathcal{C}) \right\vert } \mathcal{U}_x \star \mathcal{X} \star \mathcal{U}^{\mathrm{H}}_x \nonumber \\
& \succeq &  \frac{\mathcal{X}}{ \left\vert sp(\mathcal{C}) \right\vert },
\end{eqnarray}
where $\mathcal{U}_x = \sum\limits_{i=1}^{  \left\vert sp(\mathcal{C}) \right\vert          } \exp\left( \frac{ \sqrt{-1} 2 \pi x i  }{  \left\vert sp(\mathcal{C}) \right\vert  } \right) \mathcal{P}_{\lambda_i} $. 

Let $\mathcal{A}_1 = \exp(\mathcal{C})$ and $\mathcal{A}_2 = \exp(\mathcal{D})$, we have 
\begin{eqnarray}
\log \mathrm{Tr}\left(\exp \left( \log \mathcal{A}_1 + \log \mathcal{A}_2 \right) \right) &=_1& \frac{1}{n} \log \mathrm{Tr} \left( \exp \left( \log \mathcal{A}^{\otimes n}_1  + \log \mathcal{A}^{\otimes n}_2\right)\right) \nonumber \\
&\leq_2&  \frac{1}{n} \log \mathrm{Tr} \left( \exp \left( \log \mathfrak{P}_{\mathcal{A}^{\otimes n}_2}( \mathcal{A}_1^{\otimes n})  + \log \mathcal{A}^{\otimes n}_2\right)\right) \nonumber \\
& &+ \frac{\log \mbox{poly} (n) }{n} \nonumber \\
& =_3 & \frac{1}{n}\log \left( \mathrm{Tr}\left(  \mathfrak{P}_{\mathcal{A}^{\otimes n}_2}( \mathcal{A}_1^{\otimes n}) \star \mathcal{A}^{\otimes n}_2     \right)      \right) + \frac{\log \mbox{poly} (n) }{n} \nonumber \\
& =_4 & \log\mathrm{Tr} \left( \mathcal{A}_1 \star \mathcal{A}_2 \right) +  \frac{\log \mbox{poly} (n) }{n}.
\end{eqnarray}
he equality $=_1$ comes from  the fact that the trace is multiplicative under the Kronecker product. The inequality $\leq_2$ follows from inequality from the third property of $\mathfrak{P}_{\mathcal{A}^{\otimes m}_2}( \mathcal{A}_1^{\otimes m})$, the monotone of $\log$ and $\mathrm{Tr} \exp \left( ~\right)$ functions, and the number of eigenvalues of $\mathcal{A}_2^{\otimes n}$ growing polynomially with $n$ due to the fact that the number of distinct eigenvalues of $\mathcal{A}_2^{\otimes n}$ is bounded by the number of different types of sequences of $mp$ symbols of length $n$, see Lemma II.1 in~\cite{csiszar1998method}. The equality $=_3$ utilizes the commutativity property for tensors $\mathfrak{P}_{\mathcal{A}^{\otimes m}_2}( \mathcal{A}_1^{\otimes m}) $ and $\mathcal{A}^{\otimes m}_2$ based on the first property. Finally, the equality $=_4$ applies trace properties from the second property of the mapping $\mathfrak{P}_{\mathcal{A}^{\otimes m}_2}( \mathcal{A}_1^{\otimes m})$.  If $n \rightarrow \infty$,  the result of this theorem is established.
$\hfill \Box$

\section{Lieb's Concavity Under T-product}\label{sec:Lieb's Concavity Under Tproduct}

In this section, we will extend several trace inequalities to T-product tensors: Jensen's T-product tensor inequality in Section~\ref{subsec:Jesens Tproduct Inequality} and Klein's T-product tensor inequality in Section~\ref{subsec:Klein's T-product Inequality}. These new T-product tensor inequalities will play important roles in establishing a new version of Lieb's concavity theorem under T-product in Section~\ref{subsec:Liebs Concavity Theorem Under Tproduct}.

\subsection{Jensen's T-product Inequality}\label{subsec:Jesens Tproduct Inequality}

In this subsection, we will derive Jensen's T-product tensor inequality in Theorem~\ref{thm:Jensen's T-product Inequality intro}. We begin with a lemma which will be used in later proof in Thereom~\ref{thm:Jensen's T-product Inequality intro}. 

Given two natural numbers $m, n$, we define a T-product tensor $\theta \in \mathbb{C}^{m \times m \times p}$ as $\exp ( 2 \pi \sqrt{-1}/ n) \times \mathcal{I}_{mmp}$. Then, we can have tensor $\mathcal{D} \in \mathbb{C}^{mn \times mn \times p}$ obtained by 
\begin{eqnarray}\label{eq:diagonal unitary tensor 14}
\mathcal{D} = \mbox{diag} \left(\overbrace{\theta, \theta^2, \cdots, \theta^{n-1}, \mathcal{I
}_{mmp}}^{\mbox{total $n$ T-product tensors}} \right),
\end{eqnarray}
where $ \mbox{diag} \left(\theta, \theta^2, \cdots, \theta^{n-1}, \mathcal{I
}_{mmp} \right)$ will be a matrix with entries as T-product tensors, and the \emph{diagonal} part of this matrix is compsoded by tensors  $(\theta, \theta^2, \cdots, \theta^{n-1}, \mathcal{I}_{mmp})$. Let $\mathcal{D} \in \mathbb{C}^{mn \times mn \times p}$ be another matrix of T-product tensor, i.e., entries $d_{i, j}$ as T-product tensors. We define a new operation $\oast$ between two T-product tensors, $\mathcal{A}, \mathcal{B}$ with dimensions belong to $\mathbb{C}^{mn \times mn \times p}$ as:
\begin{eqnarray}
\left( \mathcal{A} \oast \mathcal{B} \right) \define \sum\limits_{k=1}^n a_{i, k} \star b_{k, j}, 
\end{eqnarray}
where both $a_{i, k}$ and $c_{k, j}$ are T-product tensors. Therefore, given any tensor $\mathcal{C} \in \mathbb{C}^{mn \times mn \times p}$, the $i, j$-th entry (a T-product tensor) of $\mathcal{C} \oast \mathcal{D}$ becoms $ \exp ( 2 \pi \sqrt{-1} j / n) \times c_{i, j}$, where $c_{i, j} \in \mathbb{C}^{m \times m \times p}$ is a T-product tensor. 

\begin{lemma}\label{lma:3.1 Jensen Operator Inequality Frank}
Given any tensor $\mathcal{C} \in \mathbb{C}^{mn \times mn \times p}$ and the tensor $\mathcal{D}$ defined by Eq.~\ref{eq:diagonal unitary tensor 14}, we have 
\begin{eqnarray}
\frac{1}{n} \sum\limits_{k=1}^{n} \mathcal{D}^{-k} \oast \mathcal{C} \oast \mathcal{D}^k
&=& \mbox{diag} \left(c_{1, 1}, c_{2, 2}, \cdots, c_{n, n}\right)
\end{eqnarray}
where $\mathcal{D}^k$ is the self-product of the tensor $\mathcal{D}$ by $\oast$ operation $k$-times. 
\end{lemma}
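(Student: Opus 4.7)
The plan is to exploit the structure that $\mathcal{D}$ is block-diagonal with each diagonal block a scalar multiple of the identity T-product tensor $\mathcal{I}_{mmp}$. Because each diagonal entry $\theta^j = \exp(2\pi\sqrt{-1}\,j/n)\,\mathcal{I}_{mmp}$ is a scalar times the identity, it commutes under $\star$ with every $c_{i,j} \in \mathbb{C}^{m\times m\times p}$. This reduces everything to a scalar identity about roots of unity, applied blockwise.

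First, I would compute $\mathcal{D}^k$. Since $\mathcal{D}$ is block-diagonal with diagonal blocks $\theta,\theta^2,\ldots,\theta^{n-1},\mathcal{I}_{mmp}=\theta^n$, the $\oast$-product of $\mathcal{D}$ with itself $k$ times remains block-diagonal with $j$-th diagonal block equal to $\theta^{jk}$. Likewise $\mathcal{D}^{-k}$ is block-diagonal with $j$-th diagonal block $\theta^{-jk}$; these are well-defined since each $\theta^j$ is invertible with inverse $\theta^{-j}=\exp(-2\pi\sqrt{-1}j/n)\,\mathcal{I}_{mmp}$.

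Next, I would compute the $(i,j)$-entry of $\mathcal{D}^{-k}\oast\mathcal{C}\oast\mathcal{D}^k$ by applying the definition of $\oast$ twice. Because $\mathcal{D}^{\pm k}$ is block-diagonal, only the $i$-th row term on the left and the $j$-th column term on the right survive, giving
\begin{eqnarray}
\left(\mathcal{D}^{-k}\oast\mathcal{C}\oast\mathcal{D}^k\right)_{i,j}
= \theta^{-ik}\star c_{i,j}\star \theta^{jk}
= \exp\!\left(\tfrac{2\pi\sqrt{-1}\,k(j-i)}{n}\right) c_{i,j},
\end{eqnarray}
where I used that $\theta^{\pm ik}$ is a scalar multiple of $\mathcal{I}_{mmp}$ and hence commutes with and absorbs into $c_{i,j}$ under $\star$.

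Finally, I would sum over $k=1,\ldots,n$ and divide by $n$, reducing the statement to the standard orthogonality relation
\begin{eqnarray}
\frac{1}{n}\sum_{k=1}^{n}\exp\!\left(\tfrac{2\pi\sqrt{-1}\,k(j-i)}{n}\right)
= \begin{cases} 1 & \text{if } i=j,\\ 0 & \text{if } i\neq j,\end{cases}
\end{eqnarray}
for $1\le i,j\le n$, which follows from summing a geometric series. This yields $c_{i,i}$ on the diagonal and $\mathcal{O}$ off the diagonal, proving the claim. There is no real obstacle here; the only care required is to make explicit that scalar-multiple-of-identity T-product tensors commute under $\star$ with arbitrary tensors, which justifies pulling the roots-of-unity factors outside the $\star$-products and reducing the identity to its classical scalar analogue.
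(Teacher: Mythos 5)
Your proposal is correct and follows essentially the same route as the paper: both compute the $(i,j)$-entry of $\mathcal{D}^{-k}\oast\mathcal{C}\oast\mathcal{D}^{k}$ as $\left(\exp(2\pi\sqrt{-1}(j-i)/n)\right)^{k}c_{i,j}$ and then invoke the orthogonality of the $n$-th roots of unity to kill the off-diagonal terms. Your write-up merely makes explicit the block-diagonal structure of $\mathcal{D}^{\pm k}$ and the commutation of scalar multiples of $\mathcal{I}_{mmp}$ with arbitrary blocks, details the paper compresses into ``by direct computation.''
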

\textbf{Proof:}
By direct computation with $\oast$, we have following:
\begin{eqnarray}
\left(\frac{1}{n} \sum\limits_{k=1}^{n} \mathcal{D}^{-k} \oast \mathcal{C} \oast \mathcal{D}^k \right)_{i, j} = \frac{1}{n}  \sum\limits_{k=1}^{n} \left(  \exp ( 2 \pi \sqrt{-1} (j - i) / n)  \right)^k c_{i, j}, 
\end{eqnarray}
where this summation is zero for $ i \neq j$, otherwise, it is $c_{i, i}$. $\hfill \Box$

We are ready to prove~\cref{thm:Jensen's T-product Inequality intro}. 

\ThmJensenTProductInequality*

\textbf{Proof:}
Let us define a unitary tensor $\mathcal{U} = (u_{i, j}) \in \mathbb{C}^{mn \times mn \times p}$ for $1 \leq i, j \leq n$ as $u_{i, j} = \mathcal{C}_i$,  $\mathcal{D} = \mbox{diag} \left( \theta, \theta^2, \cdots, \theta^{n-1}, \mathcal{I}_{mmp} \right)$ defined by Eq.~\eqref{eq:diagonal unitary tensor 14}, and define the tensor $\overline{\mathcal{X}} \in \mathbb{C}^{mn \times mn \times p}$ as $ \mbox{diag} \left(\mathcal{X}_1, \cdots, \mathcal{X}_n \right)$. From Lemma~\ref{lma:3.1 Jensen Operator Inequality Frank}, we have 
\begin{eqnarray}\label{eq1:thm:Jensen's T-product Inequality}
f \left( \sum\limits_{i=1}^{n} \mathcal{C}_i^{\mathrm{H}} \star \mathcal{X}_i \star \mathcal{C}_i \right) &=& f \left(    \left(   \mathcal{U}^{\mathrm{H}} \oast \overline{\mathcal{X}}   \oast \mathcal{U}        \right)_{n, n}  \right)  \nonumber \\
&=& f \left( \left( \sum\limits_{i=1}^{n} \frac{1}{n} \mathcal{D}^{-i} \oast \mathcal{U}^{\mathrm{H}} \oast \overline{\mathcal{X}} \oast   \mathcal{U} \oast \mathcal{D}^{i}  \right)_{n, n}\right)  \nonumber \\
&=&  f \left( \left( \sum\limits_{i=1}^{n} \frac{1}{n} \mathcal{D}^{-i} \oast \mathcal{U}^{\mathrm{H}} \oast \overline{\mathcal{X}} \oast   \mathcal{U} \oast \mathcal{D}^{i}  \right)\right)_{n, n}  \nonumber \\
& \leq & \left( \frac{1}{n}  \sum\limits_{i=1}^{n}  f \left( \mathcal{D}^{-i} \oast \mathcal{U}^{\mathrm{H}} \oast \overline{\mathcal{X}} \oast   \mathcal{U} \oast \mathcal{D}^{i}         \right) \right)_{n, n} \nonumber \\
&=&  \left( \frac{1}{n}  \sum\limits_{i=1}^{n}  \mathcal{D}^{-i} \oast \mathcal{U}^{\mathrm{H}} \oast f \left( \overline{\mathcal{X}} \right) \oast   \mathcal{U} \oast \mathcal{D}^{i}         \right)_{n, n} \nonumber \\
&=& \left(  \mathcal{U}^{\mathrm{H}} \oast f \left( \overline{\mathcal{X}} \right) \oast   \mathcal{U} \right)_{n, n}  \nonumber \\
&=& \sum\limits_{i=1}^{n} \mathcal{C}_i^{\mathrm{H}} \star f( \mathcal{X}_i )  \star \mathcal{C}_i,  
\end{eqnarray}
where the inequality comes from that the function $f$ is a \emph{T-product tensor convex} function. $\hfill \Box$

\subsection{Klein's T-product Inequality}\label{subsec:Klein's T-product Inequality}

The immediate application of Theorem~\ref{thm:convexity and monotonicity of a trace func intro} is to prove Klein's inequality for T-product tensor. We recall~\cref{thm:Klein's Ineqaulity T-product intro}.

\ThmKleinsTProductInequality*

%
\textbf{Proof:}
We define function $F(t)$ as 
\begin{eqnarray}\label{eq1:thm:Klein's Ineqaulity T-product}
F(t) = \mathrm{Tr}\left( f \left( \mathcal{D} + t \left(  \mathcal{C} -  \mathcal{D} \right)  \right)  \right),
\end{eqnarray}
where $t \in (0, 1)$. From Theorem~\ref{thm:convexity and monotonicity of a trace func intro}, $F(t)$ is a convex function. Then, we have
\begin{eqnarray}
F(0) + t ( F(1) - F(0) ) \geq F(t) \Longleftrightarrow F(1) - F(0)  \geq \frac{F(t) -F(0)}{t}
\end{eqnarray}
By taking limit $t \rightarrow 0$ at $F(1) - F(0)  \geq \frac{F(t) -F(0)}{t}$, we have 
\begin{eqnarray}
F(1) - F(0)  \geq F'(0),
\end{eqnarray}
then we obatin Klein's ineqaulity under T-product by
rearrangement and substitution with Eq.~\eqref{eq1:thm:Klein's Ineqaulity T-product}. $\hfill \Box$

\subsection{Lieb's Concavity Theorem Under T-product}\label{subsec:Liebs Concavity Theorem Under Tproduct}

In this section, we will extend Lieb’s concavity theorem to T-product tensors and we begin with the definition about the relative entropy between two T-product tensors. 

\begin{definition}\label{def: relative entropy for tensors}
Given two TPD tensors $\mathcal{A} \in \mathbb{C}^{m \times m \times p}$ and tensor $\mathcal{B} \in \mathbb{C}^{m \times m \times p}$. The relative entropy between two T-product tensors $\mathcal{A}$ and  $\mathcal{B}$ is defined as 
\begin{eqnarray}\label{eq:relative entropy T-product}
D( \mathcal{A} \parallel \mathcal{B}) \define \mathrm{Tr} \mathcal{A} \star (\log \mathcal{A} - \log \mathcal{B}).
\end{eqnarray} 
\end{definition}

We apply \emph{perspective function} concept for T-product tensor convex and introduce the following lemma about the convexity of a T-product tensor convex function~\cite{MR2475796}. 
\begin{lemma}\label{lma:perspective func}
Given $f$ as a convex function, two commuting tensors $\mathcal{X}, \mathcal{Y} \in \mathbb{C}^{m \times m \times p}$, i.e., $\mathcal{X} \star \mathcal{Y} = \mathcal{Y}\star \mathcal{X}$, and the existence of the $\mathcal{Y}^{-1}$, then the following map $h$ 
\begin{eqnarray}\label{eq:lma:perspective func}
h (\mathcal{X}, \mathcal{Y}) = f(\mathcal{X} \star \mathcal{Y}^{-1}) \star \mathcal{Y}
\end{eqnarray}
is jointly convex in the sense that, given $t \in [0, 1]$, if $\mathcal{X} = t \mathcal{X}_1 + (1-t) \mathcal{X}_2$ and $\mathcal{Y} = t \mathcal{Y}_1 + (1-t) \mathcal{Y}_2$ with $\mathcal{X}_1 \star \mathcal{Y}_1 = \mathcal{Y}_1 \star \mathcal{X}_1$ and $\mathcal{X}_2 \star \mathcal{Y}_2 = \mathcal{Y}_2 \star \mathcal{X}_2$, we should have 
\begin{eqnarray}
h (\mathcal{X}, \mathcal{Y})  \leq t h (\mathcal{X}_1, \mathcal{Y}_1) + (1-t) h (\mathcal{X}_2, \mathcal{Y}_2). 
\end{eqnarray}
\end{lemma}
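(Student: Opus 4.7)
The plan is to deduce Lemma~\ref{lma:perspective func} from the classical matrix perspective-function joint-convexity theorem of Effros (the cited result in~\cite{MR2475796}) by transferring the statement through the block-circulant map $\mbox{bcirc}$. Under this map, the T-product operations $\star, +, {}^{-1}$ correspond exactly to matrix multiplication, addition, and inversion on $mp\times mp$ block-circulant matrices; commutativity under $\star$ corresponds to matrix commutativity; and by Lemma~\ref{lma:spectral mapping} the functional calculus on Hermitian T-product tensors intertwines with the usual matrix functional calculus on the $\mbox{bcirc}$ images. Moreover, from the definition of TPSD via $F_{\mathcal{C}}$, the TPSD ordering on tensors pulls back to the ordinary PSD ordering on the matrix side.

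First I would set $X_i \define \mbox{bcirc}(\mathcal{X}_i)$ and $Y_i \define \mbox{bcirc}(\mathcal{Y}_i)$ for $i=1,2$, together with $X \define \mbox{bcirc}(\mathcal{X})$, $Y \define \mbox{bcirc}(\mathcal{Y})$. The hypotheses then translate into $X_iY_i = Y_iX_i$, invertibility of each $Y_i$, and $X = tX_1 + (1-t)X_2$, $Y = tY_1 + (1-t)Y_2$. The tensor perspective map transfers into the matrix perspective map $\widetilde h(X,Y) = f(XY^{-1})Y$, so that $\mbox{bcirc}(h(\mathcal{X},\mathcal{Y})) = \widetilde h(X,Y)$ and analogously at both endpoints. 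Invoking the matrix joint-convexity theorem for $\widetilde h$ yields
\begin{eqnarray}
\widetilde h(X,Y) \leq t\,\widetilde h(X_1,Y_1) + (1-t)\,\widetilde h(X_2,Y_2)
\end{eqnarray}
in the PSD order, and applying $\mbox{bcirc}^{-1}$ returns the desired TPSD inequality for $h$.

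The main obstacle is not the matrix inequality itself but the careful verification that the $\mbox{bcirc}$ isomorphism respects every ingredient in the lemma: the functional calculus $f(\mathcal{X}\star\mathcal{Y}^{-1})$, the trailing factor $\star\,\mathcal{Y}$, the commutation hypothesis $\mathcal{X}_i\star\mathcal{Y}_i=\mathcal{Y}_i\star\mathcal{X}_i$, and the TPSD/PSD correspondence. A subtlety worth flagging is that while each pair $(\mathcal{X}_i,\mathcal{Y}_i)$ commutes, the four tensors $\mathcal{X}_1,\mathcal{X}_2,\mathcal{Y}_1,\mathcal{Y}_2$ need not admit a common T-eigenbasis, so one cannot close the proof by simultaneous diagonalization on the tensor side; this is precisely what the Effros argument absorbs on the matrix side via the operator-Jensen-type representation of convex $f$. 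With that black box in place, what remains is a clean bookkeeping exercise confirming that each step of the translation is faithful.
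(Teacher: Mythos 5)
Your proposal is correct in outline, but it takes a genuinely different route from the paper. The paper does not pass through $\mbox{bcirc}$ at all: it re-runs the Effros-style argument natively in the T-product algebra, constructing the contractions $\mathcal{A} = (t\mathcal{Y}_1)^{1/2}\star\mathcal{Y}^{-1/2}$ and $\mathcal{B} = ((1-t)\mathcal{Y}_2)^{1/2}\star\mathcal{Y}^{-1/2}$, verifying $\mathcal{A}^{\mathrm{H}}\star\mathcal{A}+\mathcal{B}^{\mathrm{H}}\star\mathcal{B}=\mathcal{I}_{mmp}$, rewriting $h(\mathcal{X},\mathcal{Y})=\mathcal{Y}^{1/2}\star f\left(\mathcal{A}^{\mathrm{H}}\star\mathcal{X}_1\star\mathcal{Y}_1^{-1}\star\mathcal{A}+\mathcal{B}^{\mathrm{H}}\star\mathcal{X}_2\star\mathcal{Y}_2^{-1}\star\mathcal{B}\right)\star\mathcal{Y}^{1/2}$, and then invoking its own Jensen's T-product inequality (Theorem~\ref{thm:Jensen's T-product Inequality intro}) as the key step. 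Your reduction instead treats $\mbox{bcirc}$ as a faithful $*$-algebra embedding into block-circulant matrices and imports the classical matrix theorem wholesale; what this buys is brevity and independence from the paper's T-product Jensen inequality, at the cost of having to verify once and for all that $\mbox{bcirc}$ intertwines $\star$, inversion, the functional calculus, and the TPSD/PSD orders — facts the paper uses implicitly but never isolates as a lemma, so in a self-contained write-up you would need to supply them. Two points to tighten: (i) the Effros theorem you cite requires $f$ to be \emph{operator} convex, not merely convex, so your hypothesis on $f$ must be strengthened to match (the paper has the same gap in its lemma statement, since its own proof also needs $f$ to be T-product tensor convex to apply Theorem~\ref{thm:Jensen's T-product Inequality intro}); and (ii) you should note that the block-circulant matrices form a subalgebra closed under the functional calculus of Hermitian elements, so that the matrix inequality you obtain really is between elements in the image of $\mbox{bcirc}$ and can legitimately be pulled back by $\mbox{bcirc}^{-1}$. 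With those two items made explicit, your argument is a valid and arguably cleaner alternative to the paper's internal derivation.
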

\begin{proof}
Constructing tensors $\mathcal{A} = (t \mathcal{Y}_1)^{1/2} \star \mathcal{Y}^{-1/2}$
and $\mathcal{B} = ( (1-t)\mathcal{Y}_2)^{1/2} \star \mathcal{Y}^{-1/2}$, then we have 
\begin{eqnarray}\label{eq:lma:perspective func 1}
\mathcal{A}^{\mathrm{H}} \star \mathcal{A} + \mathcal{B}^{\mathrm{H}} \star \mathcal{B} = \mathcal{I}_{mmp}
\end{eqnarray}

Since we have 
\begin{eqnarray}
h(\mathcal{X}, \mathcal{Y}) &=& f(\mathcal{X} \star \mathcal{Y}^{-1} ) \star \mathcal{Y} \nonumber \\
&=&   \mathcal{Y}^{1/2} \star f(  \mathcal{Y}^{-1/2} \star \mathcal{X} \star \mathcal{Y}^{-1/2} ) \star \mathcal{Y}^{1/2} \nonumber \\
&=&  \mathcal{Y}^{1/2} \star f( \mathcal{A}^{\mathrm{H}} \star \mathcal{X}_1 \star \mathcal{Y}^{-1}_1 \star \mathcal{A} +  \mathcal{B}^{\mathrm{H}} \star  \mathcal{X}_2 \star \mathcal{Y}^{-1}_2  \star \mathcal{B} ) \star \mathcal{Y}^{1/2} \nonumber \\
&\leq_1& \mathcal{Y}^{1/2} \star \left( \mathcal{A}^{\mathrm{H}} \star f(\mathcal{X}_1 \star \mathcal{Y}^{-1}_1 ) \star \mathcal{A} \right. \nonumber \\
&  & \left. +  \mathcal{B}^{\mathrm{H}} \star f(\mathcal{X}_2 \star \mathcal{Y}^{-1}_2 )  \star \mathcal{B} \right) \star \mathcal{Y}^{1/2} \nonumber \\
&=&  (t \mathcal{Y}_1)^{1/2}  f(\mathcal{X}_1 \star \mathcal{Y}^{-1}_1 )  (t \mathcal{Y}_1)^{1/2} + ((1-t) \mathcal{Y}_2)^{1/2}  f(\mathcal{X}_2 \star \mathcal{Y}^{-1}_2 )  ((1-t) \mathcal{Y}_2)^{1/2}   \nonumber \\
&=& t h(\mathcal{X}_1, \mathcal{Y}_1)  + (1 - t) h(\mathcal{X}_2, \mathcal{Y}_2) 
\end{eqnarray}
where $\leq_1$ is based on the condition provided by Eq.~\eqref{eq:lma:perspective func 1} and Theorem~\ref{thm:Jensen's T-product Inequality intro}. 
\end{proof}

Following lemma is given to establish the joint convexity property of relative entropy for T-product tensors.
\begin{lemma}[Joint Convexity of Relative Entropy for T-product Tensors]\label{lma:joint conv of rela entropy}
The relative entropy function of two TPD tensors  is a jointly convex function. That is 
\begin{eqnarray}\label{eq:lma:joint conv of rela entropy}
\mathbb{D}(t \mathcal{A}_1 + (1 - t) \mathcal{A}_2 \parallel t \mathcal{B}_1 + (1 - t) \mathcal{B}_2) \leq t \mathbb{D}( \mathcal{A}_1 \parallel \mathcal{B}_1 ) +  (1- t) \mathbb{D} (\mathcal{A}_2 \parallel \mathcal{B}_2),
\end{eqnarray} 
where $ t \in [0, 1]$ and all the following four tensors $\mathcal{A}_1$, $\mathcal{B}_1$, $\mathcal{A}_2$ and $\mathcal{B}_2$, are TPD tensors. 
\end{lemma}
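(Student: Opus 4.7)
The strategy is to realize $D(\mathcal{A}\|\mathcal{B})$ as the trace of a perspective-type tensor map of a scalar convex function, and then combine the joint convexity established in Lemma~\ref{lma:perspective func} with the trace monotonicity provided by Theorem~\ref{thm:convexity and monotonicity of a trace func intro}. Concretely, I would take $f(t)=t\log t$ on $(0,\infty)$, which is convex, and form the symmetric perspective $h(\mathcal{A},\mathcal{B})\define\mathcal{B}^{1/2}\star f(\mathcal{B}^{-1/2}\star\mathcal{A}\star\mathcal{B}^{-1/2})\star\mathcal{B}^{1/2}$, which is well-defined as a Hermitian T-product tensor via the spectral mapping Lemma~\ref{lma:spectral mapping}.

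The proof then splits into two substeps. First, the trace identification $\mathrm{Tr}\,h(\mathcal{A},\mathcal{B})=D(\mathcal{A}\|\mathcal{B})$: I would expand $f(\mathcal{K})=\mathcal{K}\star\log\mathcal{K}$ with $\mathcal{K}=\mathcal{B}^{-1/2}\star\mathcal{A}\star\mathcal{B}^{-1/2}$, repeatedly apply trace cyclicity from Lemma~\ref{lma:T product trace properties} to move the $\mathcal{B}^{\pm 1/2}$ factors, and reduce $\log\mathcal{K}$ via the similarity-invariance identity $\log(\mathcal{S}^{-1}\star\mathcal{M}\star\mathcal{S})=\mathcal{S}^{-1}\star\log(\mathcal{M})\star\mathcal{S}$ applied to the T-SVD spectral representation in Eq.~\eqref{eq:spectral expression}. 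Second, the joint convexity of $h$ itself in the TPSD order: setting $\mathcal{B}=t\mathcal{B}_1+(1-t)\mathcal{B}_2$, I would introduce the contractions $\mathcal{R}_1=(t\mathcal{B}_1)^{1/2}\star\mathcal{B}^{-1/2}$ and $\mathcal{R}_2=((1-t)\mathcal{B}_2)^{1/2}\star\mathcal{B}^{-1/2}$, verify the normalization $\mathcal{R}_1^{\mathrm{H}}\star\mathcal{R}_1+\mathcal{R}_2^{\mathrm{H}}\star\mathcal{R}_2=\mathcal{I}_{mmp}$, and invoke Jensen's T-product inequality (Theorem~\ref{thm:Jensen's T-product Inequality intro}) applied to the convex $f$ with the choice $\mathcal{X}_i=\mathcal{B}_i^{-1/2}\star\mathcal{A}_i\star\mathcal{B}_i^{-1/2}$. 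Conjugating the resulting TPSD inequality by $\mathcal{B}^{1/2}$ delivers $h(\mathcal{A},\mathcal{B})\preceq t\,h(\mathcal{A}_1,\mathcal{B}_1)+(1-t)\,h(\mathcal{A}_2,\mathcal{B}_2)$, and taking trace by linearity (Lemma~\ref{lma:T product trace properties}) yields the desired Eq.~\eqref{eq:lma:joint conv of rela entropy}.

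The main obstacle is the trace identification in the non-commuting case: for general TPD pairs $\mathcal{A},\mathcal{B}$ the operator equality $\log(\mathcal{B}^{-1/2}\star\mathcal{A}\star\mathcal{B}^{-1/2})=\log\mathcal{A}-\log\mathcal{B}$ is false, so the passage from $\mathrm{Tr}\,h(\mathcal{A},\mathcal{B})$ to $\mathrm{Tr}[\mathcal{A}\star(\log\mathcal{A}-\log\mathcal{B})]$ cannot be made pointwise. I would circumvent this in one of two ways: either (i) invoke the integral representation $\log x=\int_0^\infty\bigl(\tfrac{1}{1+s}-\tfrac{1}{x+s}\bigr)\,ds$ applied in the T-SVD spectral decomposition to rewrite both $\log\mathcal{A}$ and $\log\mathcal{B}$, reducing the claim to joint convexity of $(\mathcal{A},\mathcal{B})\mapsto\mathrm{Tr}[\mathcal{A}\star(\mathcal{B}+s\mathcal{I}_{mmp})^{-1}]$ term-by-term under the integral, which in turn follows from the T-product analog of joint convexity of $(\mathcal{X},\mathcal{Y})\mapsto\mathcal{X}^{\mathrm{H}}\star\mathcal{Y}^{-1}\star\mathcal{X}$; or (ii) apply Klein's T-product inequality (Theorem~\ref{thm:Klein's Ineqaulity T-product intro}) to $f(t)=t\log t$ at suitable midpoint combinations of $(\mathcal{A}_i,\mathcal{B}_i)$ to establish the required inequality via a supporting-hyperplane argument without explicitly identifying $h$ with $D$. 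A standard continuity argument finally extends the inequality from the interior of the TPD cone to its boundary whenever subsequent applications need the extension.
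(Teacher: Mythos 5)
Your proposal diverges from the paper at the decisive step, and the divergence creates a gap you do not close. The paper also builds on the perspective of $f(x)=x\log x$ via Lemma~\ref{lma:perspective func}, but it applies that lemma not to the tensors $\mathcal{A},\mathcal{B}$ themselves: it applies it to the \emph{left- and right-multiplication superoperators} $\mathcal{F}(\mathcal{X})=\mathcal{A}\star\mathcal{X}$ and $\mathcal{G}(\mathcal{X})=\mathcal{X}\star\mathcal{B}$, which commute as linear maps on $\mathbb{C}^{m\times m\times p}$ equipped with the inner product $\langle\mathcal{F}(\mathcal{X}),\mathcal{G}(\mathcal{X})\rangle=\mathrm{Tr}(\mathcal{F}^{\mathrm{H}}(\mathcal{X})\star\mathcal{G}(\mathcal{X}))$. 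Because $\mathcal{F}$ and $\mathcal{G}$ commute, $\log(\mathcal{F}\circ\mathcal{G}^{-1})=\log\mathcal{F}-\log\mathcal{G}$ holds exactly, and evaluating $\langle\mathcal{I},h(\mathcal{F}(\mathcal{I}),\mathcal{G}(\mathcal{I}))\rangle$ recovers $\mathrm{Tr}(\mathcal{A}\log\mathcal{A}-\mathcal{A}\log\mathcal{B})=\mathbb{D}(\mathcal{A}\parallel\mathcal{B})$ with no non-commutativity obstacle; joint convexity then follows because $\mathcal{F}$ is linear in $\mathcal{A}$ and $\mathcal{G}$ is linear in $\mathcal{B}$. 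You instead form the perspective directly on the tensors, $h(\mathcal{A},\mathcal{B})=\mathcal{B}^{1/2}\star f(\mathcal{B}^{-1/2}\star\mathcal{A}\star\mathcal{B}^{-1/2})\star\mathcal{B}^{1/2}$, and you correctly observe that $\mathrm{Tr}\,h(\mathcal{A},\mathcal{B})$ is \emph{not} the Umegaki-type quantity $\mathrm{Tr}[\mathcal{A}\star(\log\mathcal{A}-\log\mathcal{B})]$ when $\mathcal{A}$ and $\mathcal{B}$ do not commute (it is a Belavkin--Staszewski-type quantity instead). That honest observation is exactly where your argument stops being a proof: everything up to that point establishes joint convexity of the wrong functional.

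Neither of your two repair attempts closes the gap as stated. In route (i), the map $(\mathcal{A},\mathcal{B})\mapsto\mathrm{Tr}[\mathcal{A}\star(\mathcal{B}+s\mathcal{I}_{mmp})^{-1}]$ is linear in $\mathcal{A}$ and convex in $\mathcal{B}$ but is not jointly convex, and you cannot import joint convexity from $(\mathcal{X},\mathcal{Y})\mapsto\mathcal{X}^{\mathrm{H}}\star\mathcal{Y}^{-1}\star\mathcal{X}$ by setting $\mathcal{X}=\mathcal{A}^{1/2}$, because $\mathcal{A}\mapsto\mathcal{A}^{1/2}$ is not affine; the term $\mathrm{Tr}[\mathcal{A}\log\mathcal{A}]$ also needs separate treatment. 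In route (ii), Klein's inequality applied to $f(t)=t\log t$ yields first-order (supporting-hyperplane) lower bounds of the form $\mathrm{Tr}[\mathcal{C}\log\mathcal{C}]\geq\mathrm{Tr}[\mathcal{C}\log\mathcal{D}]+\mathrm{Tr}\,\mathcal{C}-\mathrm{Tr}\,\mathcal{D}$, which give nonnegativity-type statements for the relative entropy but do not produce the two-variable midpoint inequality~\eqref{eq:lma:joint conv of rela entropy}; joint convexity of the quantum relative entropy is strictly stronger than anything a single tangent-line bound delivers. The missing idea is precisely the lift to the commuting multiplication superoperators (or, equivalently, a Lieb/Ando-type two-variable trace concavity result), and without it the proposal does not prove the lemma.
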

\textbf{Proof:}
From the definition~\ref{def: relative entropy for tensors}, we wish to show the joint convexity of the function $\mathbb{D} (\mathcal{A} \parallel \mathcal{B})$ with respect to the tensors $\mathcal{A}, \mathcal{B} \in \mathbb{C}^{m \times m \times p}$. Let us define tensor operators $\mathcal{F}(\mathcal{X}) \define \mathcal{A} \star \mathcal{X}$ and $\mathcal{G}(\mathcal{X}) \define \mathcal{X} \star \mathcal{B}$  for the variable tensor $\mathcal{X}  \in \mathbb{C}^{m \times m \times p}$. Then, we have $\mathcal{F}(\mathcal{X})$ and $\mathcal{G}(\mathcal{X})$ commuting on the inner product operation $\langle \mathcal{F}(\mathcal{X}), \mathcal{G}(\mathcal{X}) \rangle$ defined as:
\begin{eqnarray}
\langle \mathcal{F}(\mathcal{X}), \mathcal{G}(\mathcal{X}) \rangle &=& 
\mathrm{Tr}(\mathcal{F}^{\mathrm{H}}(\mathcal{X})\star \mathcal{G}(\mathcal{X}) )
\end{eqnarray}
Then, we have $\mathrm{Tr}(\mathcal{F}^{\mathrm{H}}(\mathcal{X})\star \mathcal{G}(\mathcal{X}) ) =\mathrm{Tr}(\mathcal{G}^{\mathrm{H}}(\mathcal{X}) \star \mathcal{F}(\mathcal{X}) )$. Since the function $f(x) = x \log x$ is tensor convex, we apply Lemma~\ref{lma:perspective func} to operators $\mathcal{F}(~~), \mathcal{G}(~~)$ and the function $h$ definition provided by Eq.~\eqref{eq:lma:perspective func} to obtain the following relation ($\mathcal{I} = \mathcal{I}_{mmp}$ in this proof): 
\begin{eqnarray}
\langle \mathcal{I}, h (\mathcal{F}(\mathcal{I}), \mathcal{G}(\mathcal{I})) \rangle &=& 
\langle \mathcal{I},~~\mathcal{G}(\mathcal{I}) \star ( \mathcal{F}(\mathcal{I}) \star \mathcal{G}^{-1}(\mathcal{I} )) \log ( \mathcal{F}(\mathcal{I} ) \star \mathcal{G}^{-1}(\mathcal{I})  )   \rangle \nonumber \\
&=& \langle \mathcal{I}, \mathcal{F}(\mathcal{I}) (\log \mathcal{F}(\mathcal{I}) - \log \mathcal{G}(\mathcal{I}) )   \rangle \nonumber \\
&=& \mathrm{Tr}(\mathcal{A} \log \mathcal{A} - \mathcal{A} \log \mathcal{B} )= \mathbb{D}(\mathcal{A} \parallel \mathcal{B}),
\end{eqnarray}
is jointly convex with respect to tensors $\mathcal{A}$ and $\mathcal{B}$. 
$\hfill \Box$

Lieb's concavity theorem is recalled below by~\cref{thm:Lieb concavity thm}.

\ThmLiebConcavityTProduct*

\textbf{Proof:}
From Klein's inequality obtain from Theorem~\ref{thm:Klein's Ineqaulity T-product intro}, the convexity of map $t \rightarrow t \log t$ (which is strictly concave for $t > 0$) and Hermitian T- tensors $\mathcal{X}, \mathcal{Y}$, we have 
\begin{eqnarray}
\mathrm{Tr} \mathcal{Y} \geq \mathrm{Tr} \mathcal{X} - \mathrm{Tr} \mathcal{X} \log \mathcal{X} + \mathrm{Tr} \mathcal{X} \log \mathcal{Y}.
\end{eqnarray}
If we replace $\mathcal{Y}$ by $e^{\mathcal{H} + \log \mathcal{A}}$, we then have
\begin{eqnarray}\label{eq:var formula}
\mathrm{Tr} e^{\mathcal{H} + \log \mathcal{A}} = \max\limits_{\mathcal{X} \succ \mathcal{O}} \Big\{ \mathrm{Tr}\mathcal{X} \star \mathcal{H} - \mathbb{D}(\mathcal{X} \parallel \mathcal{A})  + \mathrm{Tr}\mathcal{X} \Big\}
\end{eqnarray}
where $\mathbb{D}(\mathcal{X} \parallel  \mathcal{A})$ is the quantum relative entropy between two tensor operators. For real number $t \in [0, 1]$ and two positive-definite tensors $\mathcal{A}_1, \mathcal{A}_2$, we have 
\begin{eqnarray}
\mathrm{Tr} e^{\mathcal{H} + \log (t \mathcal{A}_1 + (1-t) \mathcal{A}_2 )} &=& 
\max_{\mathcal{X} \succ \mathcal{O}} \Big\{ \mathrm{Tr} \mathcal{X}\mathcal{H} 
- \mathbb{D}(\mathcal{X} \parallel t \mathcal{A}_1 + (1-t) \mathcal{A}_2) + \mathrm{Tr}\mathcal{X} \Big\}  \nonumber \\
&\geq & t \max_{\mathcal{X} \succ \mathcal{O}} \Big\{ \mathrm{Tr} \mathcal{X}\mathcal{H} 
- \mathbb{D}(\mathcal{X} \parallel  t \mathcal{A}_1) 
+ \mathrm{Tr}\mathcal{X} \Big\} \nonumber \\ 
& &+ (1-t) \max_{\mathcal{X} \succ \mathcal{O}} \Big\{ \mathrm{Tr} \mathcal{X}\mathcal{H} 
- \mathbb{D}(\mathcal{X}  \parallel  (1-t) \mathcal{A}_2) + \mathrm{Tr}\mathcal{X} \Big\} \nonumber \\
& = & t \mathrm{Tr}e^{\mathcal{H} + \log \mathcal{A}_1} +  (1-t) \mathrm{Tr}e^{\mathcal{H} + \log \mathcal{A}_2},
\end{eqnarray}
where the first and last equalities are obtained based on the variational formula provided by Eq.~\eqref{eq:var formula}, and the inequality is due to the joint convexity property of the 
relative entropy from Leamm~\ref{lma:joint conv of rela entropy}.
$\hfill \Box$

Based on Lieb's concavity theorem for T-product tensors, we have the following corollary.
\begin{corollary}\label{cor:3.3}
Let $\mathcal{A}$ be a fixed Hermitian T-product tensor, and let $\mathcal{X}$ be a random Hermitian T-product tensor, then we have
\begin{eqnarray}
\mathbb{E} \mathrm{Tr} e^{\mathcal{A} + \mathcal{X}} \leq \mathrm{Tr} e^{\mathcal{A} + \log \left( \mathbb{E} e^{\mathcal{X}} \right) }.
\end{eqnarray}
\end{corollary}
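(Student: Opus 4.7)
The plan is to derive this corollary as a direct consequence of Lieb's concavity theorem for T-product tensors (Theorem~\ref{thm:Lieb concavity thm}) combined with a Jensen-type inequality for concave functions of random T-product tensors. The main substitution I would make is to introduce the random TPD tensor $\mathcal{Y} \define e^{\mathcal{X}}$; since $\mathcal{X}$ is Hermitian, the spectral mapping lemma (Lemma~\ref{lma:spectral mapping}) guarantees that $\mathcal{Y}$ is TPD, so $\log \mathcal{Y}$ is well defined and equals $\mathcal{X}$. Consequently, the left-hand side can be rewritten as
\begin{eqnarray}
\mathbb{E}\,\mathrm{Tr}\,e^{\mathcal{A} + \mathcal{X}} \;=\; \mathbb{E}\,\mathrm{Tr}\,e^{\mathcal{A} + \log \mathcal{Y}}.
\end{eqnarray}

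Next I would invoke Theorem~\ref{thm:Lieb concavity thm}, which tells us that the map $\mathcal{Y} \mapsto \mathrm{Tr}\,e^{\mathcal{A} + \log \mathcal{Y}}$ is concave on the positive-definite cone. Since the expectation of a random T-product tensor is a convex (in fact, limiting) combination of its realizations, and since the scalar-valued map above is concave, a standard Jensen inequality argument applied to this scalar random variable yields
\begin{eqnarray}
\mathbb{E}\,\mathrm{Tr}\,e^{\mathcal{A} + \log \mathcal{Y}} \;\leq\; \mathrm{Tr}\,e^{\mathcal{A} + \log (\mathbb{E}\,\mathcal{Y})}.
\end{eqnarray}
Substituting $\mathbb{E}\,\mathcal{Y} = \mathbb{E}\,e^{\mathcal{X}}$ then gives exactly the desired inequality.

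The main obstacle, although it is mostly bookkeeping, is justifying the passage from concavity of the scalar map $\mathcal{Y} \mapsto \mathrm{Tr}\,e^{\mathcal{A} + \log \mathcal{Y}}$ (established only for deterministic convex combinations in Theorem~\ref{thm:Lieb concavity thm}) to the expectation-form Jensen inequality above. This step can be handled by first establishing the inequality for finitely supported distributions via induction on the number of atoms using the two-point concavity guaranteed by Theorem~\ref{thm:Lieb concavity thm}, and then extending to general random T-product tensors by a standard approximation/continuity argument (the trace, the tensor exponential, and the tensor logarithm are all continuous on the TPD cone). Once this Jensen-type extension is in place, the corollary follows immediately; no further manipulation of T-product identities is required.
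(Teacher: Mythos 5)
Your proposal is correct and follows essentially the same route as the paper: define $\mathcal{Y} = e^{\mathcal{X}}$, rewrite the left-hand side as $\mathbb{E}\,\mathrm{Tr}\,e^{\mathcal{A}+\log\mathcal{Y}}$, and pass the expectation inside using the concavity of $\mathcal{Y}\mapsto \mathrm{Tr}\,e^{\mathcal{A}+\log\mathcal{Y}}$ from Theorem~\ref{thm:Lieb concavity thm}. If anything, your explicit justification of the concavity-to-Jensen step (finitely supported distributions plus a continuity argument) is more careful than the paper's one-line appeal to Lieb's concavity together with Jensen.
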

\begin{proof}
Define the random tensor $\mathcal{Y} = e^{\mathcal{X}}$, we have
\begin{eqnarray}
\mathbb{E} \mathrm{Tr} e^{\mathcal{A} + \mathcal{X}} = 
\mathbb{E} \mathrm{Tr} e^{\mathcal{A} + \log \mathcal{Y} } 
\leq \mathrm{Tr} e^{\mathcal{A} + \log\left( \mathbb{E} \mathcal{Y} \right) } 
= \mathrm{Tr} e^{\mathcal{A} + \log \left( \mathbb{E} e^{\mathcal{X}} \right) },
\end{eqnarray}
where the inequality is based on Lieb's concavity theorem for T-product tensors obtained by Theorem~\ref{thm:Lieb concavity thm} and Jensen's T-product tensor inequality by Theorem~\ref{thm:Jensen's T-product Inequality intro}.
\end{proof}

\subsection{T-product Tensor Moments and Cumulants}\label{subsec:Tensor Moments and Cumulants}

Since the expectation of a random T-product tensor can be treated as convex combination, expectation will preserve the semidefinite order as
\begin{eqnarray}
\mathcal{X} \succ \mathcal{Y} \mbox{~~almost surely} ~~~ \Rightarrow ~~~
\mathbb{E}(\mathcal{X}) \succ \mathbb{E}(\mathcal{Y}).
\end{eqnarray}
From Jensen's T-product tensor inequality by Theorem~\ref{thm:Jensen's T-product Inequality intro}, we also have 
\begin{eqnarray}
\mathbb{E}(\mathcal{X}^2) \succeq \left(\mathbb{E}(\mathcal{X})\right)^2.
\end{eqnarray}

Suppose a random Hermitian T-product tensor $\mathcal{X}$ having tensor moments of all orders, i.e., $\mathbb{E}(\mathcal{X}^n)$ existing for all $n$, we can define the tensor moment-generating function, denoted as $\mathbb{M}_{\mathcal{X}}(t)$, and the tensor cumulant-generating function, denoted as $\mathbb{K}_{\mathcal{X}}(t)$, for the tensor $\mathcal{X}$ as 
\begin{eqnarray}\label{eq:def mgf and cgf}
\mathbb{M}_{\mathcal{X}}(t) \define \mathbb{E} e^{t \mathcal{X}}, \mbox{~~and~~~}
\mathbb{K}_{\mathcal{X}}(t) \define \log \mathbb{E} e^{t \mathcal{X}},
\end{eqnarray}
where $t \in \mathbb{R}$. Both the tensor moment-generating function and the tensor cumulant-generating function can be expressed as power series expansions:
\begin{eqnarray}\label{eq:mgf and cgf expans}
\mathbb{M}_{\mathcal{X}}(t) = \mathcal{I} + \sum\limits_{n=1}^{\infty} \frac{t^n}{n !}\mathbb{E}(\mathcal{X}^n), \mbox{~~and~~~}
\mathbb{K}_{\mathcal{X}}(t) = \sum\limits_{n=1}^{\infty} \frac{t^n}{n !} \psi_n,
\end{eqnarray}
where $\psi_n$ is called \emph{tensor cumulant}. The tensor cumulant $\psi_n$ can be expressed as a polynomial in terms of tensor moments up to the order $n$, for example, the first cumulant is the mean and the second cumulant is the variance: 
\begin{eqnarray}\label{eq:cumulant mean and var}
\psi_1 = \mathbb{E}(\mathcal{X}), \mbox{~~and~~~}
\psi_2 = \mathbb{E}(\mathcal{X}^2) - (\mathbb{E}(\mathcal{X}))^2.
\end{eqnarray}

Finally, in this work, we also assume that all random variables are sufficiently regular for us to compute their expectations, interchange limits, etc.

\section{Tail Bounds By Concatenation of Lieb's Concavity}\label{sec:Tail Bounds By Concatenation of Liebs Concavity}

The goal of this section is to develop several important tools which will be applied
intensively in the proof of probability inequalities for the extreme eigentule (or eigenvalue) of a sum of independent random T-product tensors. The first tool is the Laplace transform method for T-product tensors discussed in Section~\ref{subsec:Laplace Transform Method for T-product Tensors}, and the second tool is the tail bound for independent sums of random Hermitian T-product tensors presented by Section~\ref{subsec:Tail Bounds for Independent Sums of T-product Tensors}.

\subsection{Laplace Transform Method for T-product Tensors}\label{subsec:Laplace Transform Method for T-product Tensors}

We extend the Laplace transform bound from matrices to T-product tensors based on~\cite{MR1966716}. Following lemma is given to establish the Laplace transform bound for the maximum eigenvalue of a T-product tensor.
\begin{lemma}[Laplace Transform Method for T-product Tensors: Eigenvalue Version]\label{lma: Laplace Transform Method Eigenvalue Version}
Let $\mathcal{X}$ be a random Hermitian T-product tensor. For $\theta \in \mathbb{R}$, we have
\begin{eqnarray}
\mathbb{P} (\lambda_{\max}(\mathcal{X}) \geq \theta) \leq \inf_{t > 0} \Big\{ e^{-\theta t} \mathbb{E}\mathrm{Tr} e^{t \mathcal{X}} \Big\}
\end{eqnarray}
\end{lemma}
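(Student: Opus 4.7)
The plan is to mimic the classical matrix Chernoff–Laplace argument, taking care to replace each matrix-theoretic step with its T-product tensor analogue established earlier in the paper. Fix $t > 0$. Because the scalar map $s \mapsto e^{ts}$ is strictly increasing, the event $\{\lambda_{\max}(\mathcal{X}) \geq \theta\}$ is equivalent to $\{e^{t \lambda_{\max}(\mathcal{X})} \geq e^{t\theta}\}$, so by Markov's inequality applied to the nonnegative random variable $e^{t\lambda_{\max}(\mathcal{X})}$, we obtain
\begin{eqnarray}
\mathbb{P}(\lambda_{\max}(\mathcal{X}) \geq \theta) \leq e^{-t\theta}\, \mathbb{E}\, e^{t\lambda_{\max}(\mathcal{X})}.
\end{eqnarray}

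Next I would rewrite $e^{t\lambda_{\max}(\mathcal{X})}$ as $\lambda_{\max}(e^{t\mathcal{X}})$. This follows from the spectral mapping lemma (Lemma~\ref{lma:spectral mapping}) applied to the continuous function $f(s) = e^{ts}$: the eigenvalues of $e^{t\mathcal{X}}$ are exactly $e^{t s_{iik}}$, and since $e^{t\cdot}$ is increasing the maximum of these equals $e^{t\lambda_{\max}(\mathcal{X})}$. Consequently
\begin{eqnarray}
\mathbb{E}\, e^{t\lambda_{\max}(\mathcal{X})} = \mathbb{E}\, \lambda_{\max}(e^{t\mathcal{X}}).
\end{eqnarray}

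Now I would bound the maximum eigenvalue by the trace. The tensor $e^{t\mathcal{X}}$ is TPD (its eigenvalues $e^{ts_{iik}}$ are all strictly positive, as noted after the transfer rule), so in the spectral expansion of Eq.~\eqref{eq:spectral expression} every eigenvalue is nonnegative. Since $\mathrm{Tr}(e^{t\mathcal{X}})$ equals the sum of all these eigenvalues $e^{ts_{iik}}$ over $1\le i\le m$ and $0\le k\le p-1$ (by applying Lemma~\ref{lma:spectral mapping} and taking the trace), we obtain $\lambda_{\max}(e^{t\mathcal{X}}) \leq \mathrm{Tr}(e^{t\mathcal{X}})$. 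Combining the previous inequalities and taking expectation gives
\begin{eqnarray}
\mathbb{P}(\lambda_{\max}(\mathcal{X}) \geq \theta) \leq e^{-t\theta}\, \mathbb{E}\, \mathrm{Tr}(e^{t\mathcal{X}}).
\end{eqnarray}
Since this holds for every $t > 0$, we take the infimum on the right-hand side to conclude.

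The only mildly delicate step is justifying that the trace of $e^{t\mathcal{X}}$ indeed equals the sum of the scalar eigenvalues $e^{ts_{iik}}$ so that it dominates $\lambda_{\max}(e^{t\mathcal{X}})$; this requires verifying that the projectors $\mathbf{U}_i^{[k]} \star (\mathbf{U}_i^{[k]})^{\mathrm{T}}$ appearing in the spectral expansion each contribute $1$ to the trace, which follows from their orthonormality together with the cyclic invariance of trace established in Lemma~\ref{lma:T product trace properties}. Everything else is a direct transcription of the standard scalar Chernoff argument, and no deep inequalities (beyond Markov) are needed at this stage—Lieb's concavity will only be invoked later when this Laplace bound is concatenated with independence to produce the master tail bound.
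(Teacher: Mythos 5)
Your proposal is correct and follows essentially the same route as the paper's proof: Markov's inequality applied to $e^{t\lambda_{\max}(\mathcal{X})}$, the spectral mapping lemma to identify this with $\lambda_{\max}(e^{t\mathcal{X}})$, and domination of the maximum eigenvalue of the TPD tensor $e^{t\mathcal{X}}$ by its trace, followed by taking the infimum over $t>0$. Your extra remark verifying that each projector $\mathbf{U}_i^{[k]}\star(\mathbf{U}_i^{[k]})^{\mathrm{T}}$ contributes $1$ to the trace is a welcome bit of care that the paper glosses over, but it does not change the argument.
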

\begin{proof}
Given a fix value $t$, we have 
\begin{eqnarray}\label{eq1: lma: Laplace Transform Method eigenvalue}
\mathbb{P} (\lambda_{\max}(\mathcal{X}) \geq \theta) = \mathbb{P} (\lambda_{\max}(t\mathcal{X}) \geq t \theta ) = \mathbb{P} (e^{ \lambda_{\max}(t\mathcal{X})  } \geq e^{ t \theta}  ) \leq e^{-t \theta} \mathbb{E} e^{ \lambda_{\max}(t\mathcal{X}) }. 
\end{eqnarray}
The first equality uses the homogeneity of the maximum eigenvalue map, the second equality comes from the monotonicity of the scalar exponential function, and the last relation is Markov's inequality. Because we have
\begin{eqnarray}\label{eq2: lma: Laplace Transform Method eigenvalue}
e^{\lambda_{\max} (t \mathcal{Y})} = \lambda_{\max}(e^{t \mathcal{Y}}) \leq  \mathrm{Tr} e^{t \mathcal{Y}},
\end{eqnarray}
where the first equality used the spectral mapping theorem from Lemma~\ref{lma:spectral mapping}, and the inequality holds because the exponential of an Hermitian T-product tensor is TPD and the maximum eigenvalue of a TPD tensor is dominated by the trace from Eq.~\eqref{eq:trace def}. From Eqs~\eqref{eq1: lma: Laplace Transform Method eigenvalue} and~\eqref{eq2: lma: Laplace Transform Method eigenvalue}, this lemma is established.
\end{proof}

The Lemma~\ref{lma: Laplace Transform Method Eigenvalue Version} helps us to control the tail probabilities for the maximum eigenvalue of a random Hermitian T-product tensor by utilizing a bound for the trace of the tensor moment-generating function introduced in Section~\ref{subsec:Tensor Moments and Cumulants}.

Since T-product tensors also have notions about eigentuples, we then extend Lemma~\ref{lma: Laplace Transform Method Eigenvalue Version} from eigenvalues version to eigentuples version. We begin with the derivation of Markov's inequality for random vectors.
\begin{lemma}[Markov's inequality for Random Vector]\label{lma:Markov's inequality for Random Vector}
If $\mathbf{X} \in \mathbb{R}^{p}$ is a nonnegative random vector and $\mathbf{a} >  \mathbf{0}$,  then the probability that  $\mathbf{X}$ is at least $\mathbf{a}= [a_i]$ can be bounded as:
\begin{eqnarray}\label{eq:lma:Markov's inequality for Random Vector}
\mathrm{Pr}\left( \mathbf{X} \geq \mathbf{a} \right) \leq \min\limits_{i} \left\{ \frac{ \left( \mathbb{E}\left( \mathbf{X} \right) \right)_i }{a_i}  \right\}
\end{eqnarray}
where $1 \leq i \leq p$.
\end{lemma}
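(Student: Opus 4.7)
The plan is to reduce the vector-valued tail bound to the classical scalar Markov inequality applied coordinatewise, and then take the best coordinate. The key observation is that the event $\{\mathbf{X} \geq \mathbf{a}\}$, understood componentwise, implies the event $\{X_i \geq a_i\}$ for every individual index $1 \leq i \leq p$. Hence the joint event is a subset of each marginal event, and monotonicity of probability gives
\begin{eqnarray}
\mathrm{Pr}(\mathbf{X} \geq \mathbf{a}) \leq \mathrm{Pr}(X_i \geq a_i), \qquad \text{for every } 1 \leq i \leq p.
\end{eqnarray}

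Next, for each fixed $i$, since $X_i \geq 0$ almost surely and $a_i > 0$, the standard scalar Markov inequality applies and yields
\begin{eqnarray}
\mathrm{Pr}(X_i \geq a_i) \leq \frac{\mathbb{E}[X_i]}{a_i} = \frac{(\mathbb{E}[\mathbf{X}])_i}{a_i}.
\end{eqnarray}
Combining the two displays, the same upper bound $(\mathbb{E}[\mathbf{X}])_i / a_i$ holds for $\mathrm{Pr}(\mathbf{X} \geq \mathbf{a})$ for every $i$, so taking the minimum over $i \in \{1,\dots,p\}$ produces the tightest such bound and yields exactly Eq.~\eqref{eq:lma:Markov's inequality for Random Vector}.

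There is really no technical obstacle here; the only subtle point is making clear that $\mathbf{X} \geq \mathbf{a}$ is interpreted as componentwise inequality (so the joint event is contained in each marginal event), and that nonnegativity of $\mathbf{X}$ plus positivity of each $a_i$ is exactly the hypothesis needed to invoke scalar Markov on each coordinate. No multivariate machinery is required, and the $\min_i$ on the right-hand side arises simply as the optimization over which coordinate gives the sharpest bound.
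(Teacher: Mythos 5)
Your proof is correct, and it takes a cleaner route than the paper's. The paper argues via a joint density: it writes $\mathbb{E}(\mathbf{X})$ as an integral over the positive orthant, splits off the region $[\mathbf{a},\infty^p]$, bounds the integrand below by $\mathbf{a}$ there, and arrives at the componentwise inequality $\mathbb{E}(\mathbf{X}) \geq \mathbf{a}\,\mathrm{Pr}(\mathbf{X}\geq\mathbf{a})$ before dividing by each $a_i$ and minimizing. Your argument instead uses the event inclusion $\{\mathbf{X}\geq\mathbf{a}\}\subseteq\{X_i\geq a_i\}$ for every coordinate and then invokes scalar Markov on each marginal. This buys you two things: it requires no density (it works for any nonnegative random vector, discrete or otherwise), and it sidesteps a defect in the paper's computation, namely the claimed equality $\int_{\mathbf{0}}^{\infty^p} = \int_{\mathbf{0}}^{\mathbf{a}} + \int_{\mathbf{a}}^{\infty^p}$, which fails for $p>1$ because the two boxes do not cover the orthant (the paper's conclusion survives only because the integrand is nonnegative, so discarding the complement of $[\mathbf{a},\infty^p]$ still gives the needed one-sided bound). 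The paper's approach, for its part, keeps the whole derivation in vector form, mirroring the classical one-dimensional proof line by line, but it establishes nothing beyond what your coordinatewise reduction already gives. Your version is the one I would keep.
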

\textbf{Proof:} 
Because we have 
\begin{eqnarray}
\mathbb{E}\left( \mathbf{X} \right) &=& \int\limits_{\mathbf{0}}^{\infty^p} \mathbf{x} f( \mathbf{x}) d  \mathbf{x} =  \int\limits_{\mathbf{0}}^{\mathbf{a}} \mathbf{x} f( \mathbf{x}) d  \mathbf{x}  + \int\limits_{\mathbf{a}}^{\infty^p} \mathbf{x} f( \mathbf{x}) d  \mathbf{x}\nonumber \\
&\geq & \int\limits_{\mathbf{a}}^{\infty^p} \mathbf{x} f( \mathbf{x}) d  \mathbf{x} \geq 
\int\limits_{\mathbf{a}}^{\infty^p} \mathbf{a} f( \mathbf{x}) d  \mathbf{x} = 
\mathbf{a} \int\limits_{\mathbf{a}}^{\infty^p} f( \mathbf{x}) d  \mathbf{x} \nonumber \\
&=& \mathbf{a} \mathrm{Pr}\left( \mathbf{X} \geq \mathbf{a} \right),
\end{eqnarray}
therefore, we have the desired inequality shown by Eq.~\eqref{eq:lma:Markov's inequality for Random Vector}. 
$\hfill \Box$

We are ready to present following lemma about Laplace transform method for T-product tensors with eigentuples. 
\begin{lemma}[Laplace Transform Method for T-product Tensors: Eigentuple Version]\label{lma: Laplace Transform Method Eigentuple Version}
Let $\mathcal{X} \in \mathbb{C}^{m \times m \times p}$ be a random T-positive definite (TPD) tensor and an all one vector $\mathbf{1}_p = [1, 1, \cdots, 1]^\mathrm{T} \in \mathbb{C}^{p}$. Suppose we have 
\begin{eqnarray}\label{eq1:lma: Laplace Transform Method Eigentuple Version}
\frac{1}{p} \lambda_{\max}^{p}(e^{t \mathcal{X}}) + 1 - \frac{1}{p} \leq \mathrm{Tr}(e^{t \mathcal{X}}), 
\end{eqnarray}
where $t > 0$~\footnote{If we scale the random TPD tensor $\mathcal{X}$ as the $\lambda_{\max}(e^{t \mathcal{X}}) = 1$, then Eq.~\eqref{eq1:lma: Laplace Transform Method Eigentuple Version} is always valid.}. Then, for $\mathbf{b} \in \mathbb{R}^p$, we obtain 
\begin{eqnarray}
\mathbb{P} (\mathbf{d}_{\max}(\mathcal{X}) \geq \mathbf{b}) \leq \inf_{t > 0}
 \min\limits_{i} \left\{ \frac{  \mathbb{E} \left( \mathrm{Tr}\left(e^{t \mathcal{X}} \right)\right)   }{ \left(e_{\bigodot}^{ t \mathbf{b}}  \right)_i  }\right\},
\end{eqnarray}
where $\mathbf{d}_{\max}$ is the maximum eigentuple of the TPD tensor $\mathcal{X} $.
\end{lemma}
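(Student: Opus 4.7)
The plan is to follow the five-move architecture of Lemma~\ref{lma: Laplace Transform Method Eigenvalue Version}, with each scalar manipulation replaced by its $\bigodot$-calculus analogue so that tubes, rather than numbers, play the role of the ``values.'' The four tools I would invoke are: positive homogeneity of $\mathbf{d}_{\max}(\cdot)$, componentwise monotonicity of $e_{\bigodot}^{\cdot}$, the spectral mapping identity $e_{\bigodot}^{\mathbf{d}_{\max}(t\mathcal{X})} = \mathbf{d}_{\max}(e^{t\mathcal{X}})$, and vector Markov (Lemma~\ref{lma:Markov's inequality for Random Vector}).

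Concretely, I would first use positive homogeneity of the maximum eigentuple --- which follows from the eigenrelation $\mathcal{X}\star\mathbf{X} = \mathbf{d}\circ\mathbf{X}$ after multiplying by $t > 0$ --- to rewrite the event $\{\mathbf{d}_{\max}(\mathcal{X}) \geq \mathbf{b}\}$ as the equivalent event $\{\mathbf{d}_{\max}(t\mathcal{X}) \geq t\mathbf{b}\}$. Applying the $\bigodot$-exponential componentwise (legitimate because the power series definition of $e_{\bigodot}$ in the commutative tube algebra preserves the componentwise order on nonnegative tubes) yields $\{e_{\bigodot}^{\mathbf{d}_{\max}(t\mathcal{X})} \geq e_{\bigodot}^{t\mathbf{b}}\}$. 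The spectral mapping Lemma~\ref{lma:spectral mapping}, re-read at the level of eigentuples via the $\bigodot$-product, then gives $e_{\bigodot}^{\mathbf{d}_{\max}(t\mathcal{X})} = \mathbf{d}_{\max}(e^{t\mathcal{X}})$, the tubal analogue of $e^{\lambda_{\max}(t\mathcal{Y})} = \lambda_{\max}(e^{t\mathcal{Y}})$ used in the scalar proof. At this point Markov's inequality for random vectors bounds the probability by $\min_{1 \leq i \leq p} (\mathbb{E}\,\mathbf{d}_{\max}(e^{t\mathcal{X}}))_i / (e_{\bigodot}^{t\mathbf{b}})_i$.

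The remaining and hardest step is to dominate every coordinate of $\mathbf{d}_{\max}(e^{t\mathcal{X}})$ by the scalar $\mathrm{Tr}(e^{t\mathcal{X}})$, so that the vector numerator in the Markov bound collapses to $\mathbb{E}\,\mathrm{Tr}(e^{t\mathcal{X}})$ as required. This is exactly what the hypothesis in Eq.~\eqref{eq1:lma: Laplace Transform Method Eigentuple Version} is designed to supply: each component of a TPD eigentuple is bounded by $\lambda_{\max}$, so after raising to the $p$-th power and invoking $\frac{1}{p}\lambda_{\max}^p(e^{t\mathcal{X}}) + 1 - \frac{1}{p} \leq \mathrm{Tr}(e^{t\mathcal{X}})$ one obtains $(\mathbf{d}_{\max}(e^{t\mathcal{X}}))_i \leq \mathrm{Tr}(e^{t\mathcal{X}})$ for every $i$. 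Taking expectations, substituting into the Markov bound, and finally taking $\inf_{t>0}$ closes the argument. I expect the delicate piece to be making the passage from $\lambda_{\max}^p$ back to the tubal components of $\mathbf{d}_{\max}$ fully rigorous --- specifically, relating the T-SVD eigenvalues $s_{iik}$ to the eigentuple components obtained from $\mathcal{C}\star\mathbf{X}=\mathbf{d}\circ\mathbf{X}$ via Theorem~4.1 of \cite{qi2021tquadratic}. The footnote's normalization $\lambda_{\max}(e^{t\mathcal{X}}) = 1$ is the cue for where the hypothesis degenerates into a tight scalar comparison and so is the natural reduction point for this last step.
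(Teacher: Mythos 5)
Your proposal matches the paper's proof in all essentials: the same chain of homogeneity of $\mathbf{d}_{\max}$, componentwise monotonicity of $e_{\bigodot}$, and the vector Markov inequality (Lemma~\ref{lma:Markov's inequality for Random Vector}), followed by using hypothesis~\eqref{eq1:lma: Laplace Transform Method Eigentuple Version} to dominate the numerator by $\mathrm{Tr}(e^{t\mathcal{X}})$. The only (minor) divergence is the last step: the paper never invokes a tubal spectral-mapping identity $e_{\bigodot}^{\mathbf{d}_{\max}(t\mathcal{X})} = \mathbf{d}_{\max}(e^{t\mathcal{X}})$, but instead bounds $e_{\bigodot}^{\mathbf{d}_{\max}(t\mathcal{X})} \leq e_{\bigodot}^{\lambda_{\max}(t\mathcal{X})\mathbf{1}_p} \leq \mathrm{Tr}(e^{t\mathcal{X}})\,\mathbf{1}_p$, the hypothesis being exactly the statement that the largest component of $e_{\bigodot}^{\lambda_{\max}(t\mathcal{X})\mathbf{1}_p}$, namely $\tfrac{1}{p}\lambda_{\max}^{p}(e^{t\mathcal{X}})+1-\tfrac{1}{p}$, is at most the trace---so the delicate identity you flag is not actually needed.
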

\textbf{Proof:}
Given a fix value $t$, we have 
\begin{eqnarray}\label{eq1: lma: Laplace Transform Method eigentuple}
\mathbb{P} (\mathbf{d}_{\max}(\mathcal{X}) \geq \mathbf{b}) = \mathbb{P} (\mathbf{d}_{\max}(t\mathcal{X}) \geq t \mathbf{b}) = \mathbb{P} ( e_{\bigodot}^{ \mathbf{d}_{\max}(t\mathcal{X})  } \geq e_{\bigodot}^{ t \mathbf{b}}  ) \leq \min\limits_{i} \left\{ \frac{ \left( \mathbb{E}\left( e_{\bigodot}^{ \mathbf{d}_{\max}(t\mathcal{X})  }   \right) \right)_i   }{ \left( e_{\bigodot}^{ t \mathbf{b}}  \right)_i  }\right\}. 
\end{eqnarray}
The first equality uses the homogeneity of the maximum eigenvalue map, the second equality comes from the monotonicity of the exponential function with operation $\bigodot$ defined in Proposition 2.1 from work~\cite{qi2021tquadratic}, and the last relation is Markov's inequality for random vector obtained from Lemma~\ref{lma:Markov's inequality for Random Vector} since both $\mathbb{E}\left( e_{\bigodot}^{ \mathbf{d}_{\max}(t\mathcal{X})  }   \right)$ and $e_{\bigodot}^{ t \mathbf{b}}$ are vectors with $p$ entries. Then, we have
\begin{eqnarray}\label{eq2: lma: Laplace Transform Method eigentuple}
e_{\bigodot}^{\mathbf{d}_{\max} (t \mathcal{X})} \leq e_{\bigodot}^{\lambda_{\max}(t \mathcal{X}) \mathbf{1}_p} \leq \mathrm{Tr}\left(e^{t \mathcal{X}} \right)   \mathbf{1}_p,
\end{eqnarray}
where the first inequality comes from the relation that $\mathbf{d}_{\max}(t \mathcal{X}) \leq \lambda_{\max}( t \mathcal{X}) \mathbf{1}_p$, and the second inequality holds because $e^{\lambda_{\max}(t\mathcal{X})} = \lambda_{\max}(e^{t \mathcal{X}})$ and the relation $\frac{1}{p} \lambda_{\max}^{p}(e^{t \mathcal{X}}) + 1 - \frac{1}{p} \leq \mathrm{Tr}(e^{t \mathcal{X}})$. From Eqs~\eqref{eq1: lma: Laplace Transform Method eigentuple} and~\eqref{eq2: lma: Laplace Transform Method eigentuple}, this lemma is established.
$\hfill \Box$

\subsection{Tail Bounds for Independent Sums of Random T-product Tensors}\label{subsec:Tail Bounds for Independent Sums of T-product Tensors}

This section will present the tail bound for the sum of independent random T-product tensors and several corollaries according to this tail bound for independent sums. We begin with the subadditivity lemma of tensor cumulant-generating functions.

\begin{lemma}\label{lma:subadditivity of tensor cgfs}
Given a finite sequence of independent random Hermitian T-product tensors $\{ \mathcal{X}_i \}$, where $\mathcal{X}_i \in \mathbb{C}^{m \times m \times p}$, we have 
\begin{eqnarray}\label{eq1:lma:subadditivity of tensor cgfs}
\mathbb{E} \mathrm{Tr} \exp\left( \sum\limits_{i=1}^n t \mathcal{X}_i \right) \leq 
\mathrm{Tr} \exp\left( \sum\limits_i^n \log \mathbb{E} e^{t \mathcal{X}_i} \right),~~\mbox{for $t \in \mathbb{R}$.}  
\end{eqnarray}
\end{lemma}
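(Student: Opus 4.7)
The plan is to prove the bound by iterating Corollary~\ref{cor:3.3} once for each summand, using the independence of the $\mathcal{X}_i$ through the tower property of conditional expectation. The key observation is that Corollary~\ref{cor:3.3} allows us to replace a random Hermitian summand $\mathcal{Y}$ inside $\mathrm{Tr}\exp(\mathcal{A}+\mathcal{Y})$ by the deterministic tensor $\log\mathbb{E} e^{\mathcal{Y}}$, at the cost of an inequality. Since each $t\mathcal{X}_i$ is Hermitian whenever $\mathcal{X}_i$ is and $t\in\mathbb{R}$, and since sums of Hermitian T-product tensors are Hermitian, this replacement can be performed one index at a time while keeping the accumulated ``frozen'' part on the left a valid fixed Hermitian tensor.

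First I would set up the induction on $n$. For the base case $n=1$, the inequality reduces to $\mathbb{E}\mathrm{Tr}\exp(t\mathcal{X}_1) \leq \mathrm{Tr}\exp(\log\mathbb{E} e^{t\mathcal{X}_1})$, which is an equality once we observe that $\mathrm{Tr}$ commutes with expectation and that $\exp\circ\log$ acts as the identity on TPD tensors (since $e^{t\mathcal{X}_1}$ is TPD by the spectral mapping Lemma~\ref{lma:spectral mapping}). For the inductive step, I would condition on $\mathcal{X}_1,\ldots,\mathcal{X}_{n-1}$ and apply Corollary~\ref{cor:3.3} with the fixed tensor $\mathcal{A} = \sum_{i=1}^{n-1} t\mathcal{X}_i$ and the random tensor $\mathcal{X} = t\mathcal{X}_n$, obtaining
\begin{eqnarray*}
\mathbb{E}_{\mathcal{X}_n} \mathrm{Tr}\exp\!\left(\sum_{i=1}^{n-1} t\mathcal{X}_i + t\mathcal{X}_n\right) \leq \mathrm{Tr}\exp\!\left(\sum_{i=1}^{n-1} t\mathcal{X}_i + \log\mathbb{E} e^{t\mathcal{X}_n}\right).
\end{eqnarray*}
Taking outer expectation over the remaining variables preserves the inequality (the right-hand side is a deterministic-plus-random Hermitian argument of a trace-exponential, and the inequality is pointwise in $\mathcal{X}_1,\ldots,\mathcal{X}_{n-1}$).

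At this point I would iterate the same move. Treat $\mathcal{H} := \log\mathbb{E} e^{t\mathcal{X}_n}$ as a fixed Hermitian tensor, condition on $\mathcal{X}_1,\ldots,\mathcal{X}_{n-2}$, and apply Corollary~\ref{cor:3.3} with $\mathcal{A} = \mathcal{H} + \sum_{i=1}^{n-2} t\mathcal{X}_i$ and random tensor $t\mathcal{X}_{n-1}$. Independence of $\mathcal{X}_{n-1}$ from the frozen block is what makes $\mathbb{E}_{\mathcal{X}_{n-1}}$ act only on the $t\mathcal{X}_{n-1}$ slot. Repeating this $n-1$ more times peels off the summands one by one until all random tensors are replaced by their cumulant-generating-function counterparts $\log\mathbb{E} e^{t\mathcal{X}_i}$, yielding the right-hand side of~\eqref{eq1:lma:subadditivity of tensor cgfs}.

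The main obstacle I anticipate is the bookkeeping that ensures each application of Corollary~\ref{cor:3.3} is legitimate: namely, that at every stage the ``fixed'' part $\mathcal{A}$ is a genuine Hermitian T-product tensor, and that the conditional expectation really does isolate a single independent summand. The Hermiticity is preserved because sums of Hermitian T-product tensors are Hermitian and because $\log\mathbb{E} e^{t\mathcal{X}_i}$ is Hermitian (its argument is TPD). The conditional-expectation step is where independence is essential: without it, $\mathbb{E}_{\mathcal{X}_i}$ would not factor through the other summands and the inductive peeling would collapse. I would state this explicitly as a use of Fubini / tower property for independent random tensors before chaining the $n$ applications of Corollary~\ref{cor:3.3} together.
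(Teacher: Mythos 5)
Your proposal is correct and follows essentially the same route as the paper: the paper likewise defines the cumulant-generating tensors $\mathbb{K}_i(t)=\log\mathbb{E} e^{t\mathcal{X}_i}$ and peels off the summands one at a time by applying the Lieb-concavity consequence (Corollary~\ref{cor:3.3}) under iterated conditional expectations $\mathbb{E}_0\cdots\mathbb{E}_{n-1}$, exactly as in your inductive step. The only difference is presentational (a chained string of inequalities rather than an explicit induction), so there is nothing substantive to add.
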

\textbf{Proof:}
We begin with the following definition for the tensor cumulant-generating function for $\mathcal{X}_i$ as:
\begin{eqnarray}
\mathbb{K}_{i}(t) &\define& \log (\mathbb{E} e^{t \mathcal{X}_i}).
\end{eqnarray}
Then, we define the Hermitian T-product tensor $\mathcal{H}_k$ as 
\begin{eqnarray}\label{eq2:lma:subadditivity of tensor cgfs}
\mathcal{H}_k(t) = \sum\limits_{i = 1}^{k-1} t \mathcal{X}_i + \sum\limits_{i = k+1}^n \mathbb{K}_{i}(t).
\end{eqnarray}

By applying Eq.~\eqref{eq2:lma:subadditivity of tensor cgfs} to Theorem~\ref{thm:Lieb concavity thm} repeatedly for $k = 1,2,\cdots,n$, we have 
\begin{eqnarray}
\mathbb{E} \mathrm{Tr} \exp\left( \sum\limits_{i=1}^n t \mathcal{X}_i \right) 
&=_1& \mathbb{E}_0 \cdots \mathbb{E}_{n-1} \mathrm{Tr} \exp\left(\sum\limits_{i=1}^{n-1} t\mathcal{X}_i + t\mathcal{X}_n \right) \nonumber \\
&\leq&  \mathbb{E}_0 \cdots \mathbb{E}_{n-2} \mathrm{Tr} \exp\left( \sum\limits_{i=1}^{n-1}t \mathcal{X}_i + \log\left( \mathbb{E}_{n - 1}e^{t\mathcal{X}_n} \right) \right) \nonumber \\
& = &  \mathbb{E}_0 \cdots \mathbb{E}_{n-2} \mathrm{Tr} \exp\left( \sum\limits_{i=1}^{n-2} t \mathcal{X}_i + t \mathcal{X}_{n-1} +\mathbb{K}_{n}(t).  \right) \nonumber \\
& \leq &  \mathbb{E}_0 \cdots \mathbb{E}_{n-3} \mathrm{Tr} \exp\left( \sum\limits_{i=1}^{n-2} t \mathcal{X}_i +  \mathbb{K}_{n-1}(t) + \mathbb{K}_{n}(t)  \right) \nonumber \\
\cdots 
& \leq & \mathrm{Tr} \exp\left( \sum\limits_{i = 1}^{n}  \mathbb{K}_{i}(t)  \right)
\end{eqnarray}
where the equality $=_1$ is based on the law of total expectation by defining $\mathbb{E}_i$ as the conditional expectation given $\mathcal{X}_1, \cdots, \mathcal{X}_i$.
$\hfill \Box$

We are ready to present the theorem for the tail bound of independent sums of random Hermitian T-product tensors with respect to the maximum eigenvalue. We recell~\cref{thm:Master Tail Bound for Independent Sum of Random Tensors}

\ThmMasterTailBoundEigenvalue*

%
\textbf{Proof:}
By substituting the Lemma~\ref{lma:subadditivity of tensor cgfs} into the Laplace transform bound provided by the Lemma~\ref{lma: Laplace Transform Method Eigenvalue Version}, this theorem is established. 
$\hfill \Box$

Several useful corollaries will be provided based on Theorem~\ref{thm:Master Tail Bound for Independent Sum of Random Tensors}.

\begin{corollary}\label{cor:3_7}
Given a finite sequence of independent Hermitian random tensors $\{ \mathcal{X}_i \} \in \mathbb{C}^{m \times m \times p}$. If there is a function $f: (0, \infty) \rightarrow [0, \infty]$ and a sequence of non-random Hermitian T-product tensors $\{ \mathcal{A}_i \}$ with following condition:
\begin{eqnarray}\label{eq:cond in cor 3_7}
f(t) \mathcal{A}_i \succeq  \log \mathbb{E} e^{t \mathcal{X}_i},~~\mbox{for $t > 0$.}
\end{eqnarray}
Then, for all $\theta \in \mathbb{R}$, we have
\begin{eqnarray}
\mathrm{Pr} \left( \lambda_{\max}\left(\sum\limits_{i=1}^n \mathcal{X}_i \right) \geq \theta \right)
& \leq & mp \inf\limits_{t > 0}\Big\{\exp\left[ - t \theta + f(t)\lambda_{\max}\left(\sum\limits_{i=1}^n \mathcal{A}_i \right) \right] \Big\}
\end{eqnarray}
\end{corollary}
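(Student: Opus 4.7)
The plan is to bolt the hypothesis of the corollary onto the master tail bound of Theorem~\ref{thm:Master Tail Bound for Independent Sum of Random Tensors} and then collapse the trace-exponential on the right-hand side into a $mp$-copies-of-$\lambda_{\max}$ expression. Concretely, I would start from
\begin{eqnarray*}
\mathrm{Pr}\!\left(\lambda_{\max}\!\left(\sum_{i=1}^n \mathcal{X}_i\right)\!\geq \theta\right)
\leq \inf_{t>0}\Big\{ e^{-t\theta}\,\mathrm{Tr}\exp\!\left(\sum_{i=1}^n \log \mathbb{E}e^{t\mathcal{X}_i}\right)\Big\},
\end{eqnarray*}
which is directly Theorem~\ref{thm:Master Tail Bound for Independent Sum of Random Tensors}, and then replace the argument of the trace-exponential using the hypothesis.

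Next, I would exploit the TPSD ordering provided by the condition \eqref{eq:cond in cor 3_7}. Summing the $n$ inequalities $\log \mathbb{E}e^{t\mathcal{X}_i} \preceq f(t)\mathcal{A}_i$ (which is legitimate because the TPSD cone is closed under addition) gives $\sum_{i=1}^n \log \mathbb{E}e^{t\mathcal{X}_i}\preceq f(t)\sum_{i=1}^n \mathcal{A}_i$. Feeding this into the trace-exponential monotonicity property \eqref{eq:trace exp function} yields
\begin{eqnarray*}
\mathrm{Tr}\exp\!\left(\sum_{i=1}^n \log \mathbb{E}e^{t\mathcal{X}_i}\right)\leq \mathrm{Tr}\exp\!\left(f(t)\sum_{i=1}^n \mathcal{A}_i\right).
\end{eqnarray*}

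The last step is to pass from the trace of the exponential to $mp$ times the exponential of the maximum eigenvalue. By the spectral mapping Lemma~\ref{lma:spectral mapping} applied to the Hermitian T-product tensor $\mathcal{B}\define f(t)\sum_i \mathcal{A}_i$, we get $\mathrm{Tr}\exp(\mathcal{B})=\sum_{i=1}^{m}\sum_{k=0}^{p-1}\exp(s_{iik}(\mathcal{B}))$, a sum of $mp$ positive scalars each bounded by $\exp(\lambda_{\max}(\mathcal{B}))$. Since $f(t)\geq 0$, we also have $\lambda_{\max}(f(t)\sum_i\mathcal{A}_i)=f(t)\lambda_{\max}(\sum_i\mathcal{A}_i)$ by homogeneity of the max eigenvalue. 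Combining yields
\begin{eqnarray*}
\mathrm{Tr}\exp\!\left(f(t)\sum_{i=1}^n \mathcal{A}_i\right)\leq mp\exp\!\left(f(t)\lambda_{\max}\!\left(\sum_{i=1}^n \mathcal{A}_i\right)\right),
\end{eqnarray*}
and substituting back and taking $\inf_{t>0}$ gives the corollary.

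The proof is essentially a concatenation of tools already proven, so there is no single hard step, but the one requiring the most care is the trace-to-max-eigenvalue step: it hinges on knowing that a Hermitian T-product tensor in $\mathbb{C}^{m\times m\times p}$ carries exactly $mp$ real eigenvalues, which is precisely the output of the T-SVD-based spectral representation \eqref{eq:spectral expression} and the reality statement cited from~\cite{qi2021tquadratic}. Apart from that, the argument is a clean substitution-and-monotonicity chain.
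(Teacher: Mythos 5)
Your proposal is correct and follows essentially the same route as the paper: invoke the master tail bound, sum the semidefinite dominations and apply trace-exponential monotonicity (Eq.~\eqref{eq:trace exp function}), then bound the trace by $mp$ times the maximum eigenvalue and use spectral mapping with $f(t)\geq 0$. Your write-up is in fact slightly more explicit than the paper's, which compresses the first two steps into a single line.
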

\textbf{Proof:}
From the condition provided by Eq.~\eqref{eq:cond in cor 3_7} and Theorem~\ref{thm:Master Tail Bound for Independent Sum of Random Tensors}, we have 
\begin{eqnarray}
\mathrm{Pr} \left( \lambda_{\max}\left(\sum\limits_{i=1}^n \mathcal{X}_i \right) \geq \theta \right) & \leq & e^{- t \theta} \mathrm{Tr} \exp ( f(t) \sum\limits_{i=1}^n \mathcal{A}_i) \nonumber \\
& \leq & mp e^{- t \theta} \lambda_{\max} \left( \exp (f( t ) \sum\limits_{i=1}^n \mathcal{A}_i) \right) \nonumber \\
& = &  mp  e^{- t \theta} \exp\left( f( t ) \lambda_{\max} \left(\sum\limits_{i=1}^n \mathcal{A}_i \right) \right),
\end{eqnarray}
where the second inequality holds since we bound the trace of a TPD T-product tensor  by the dimension size $m \times p $ multiplied by the maximum eigenvalue; the last equality is based on the spectral mapping theorem since the function $f$ is nonnegative. 
$\hfill \Box$

\begin{corollary}\label{cor:3_9}
Given a finite sequence of independent Hermitian random tensors $\{ \mathcal{X}_i \} \in \mathbb{C}^{m \times m \times p}$. For all $\theta \in \mathbb{R}$, we have
\begin{eqnarray}
\mathrm{Pr} \left( \lambda_{\max}\left(\sum\limits_{i=1}^n \mathcal{X}_i \right) \geq \theta \right)
& \leq &m p  \inf\limits_{t > 0}  \Big\{  \exp\left[ - t \theta +n \log \lambda_{\max} \left( \frac{1}{n} \sum\limits_{i=1}^{n} \mathbb{E} e^{t \mathcal{X}_i} \right)  \right]  \Big\} \nonumber \\
\end{eqnarray}
\end{corollary}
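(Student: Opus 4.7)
The plan is to apply the master tail bound from Theorem~\ref{thm:Master Tail Bound for Independent Sum of Random Tensors} and then estimate the trace-of-exponential term $\mathrm{Tr}\exp\!\left(\sum_i \log \mathbb{E} e^{t\mathcal{X}_i}\right)$ by the maximum eigenvalue of an averaged moment-generating tensor. The overall strategy mirrors Corollary~\ref{cor:3_7}, except that the \emph{ad hoc} dominating hypothesis $f(t)\mathcal{A}_i \succeq \log \mathbb{E} e^{t\mathcal{X}_i}$ is replaced by T-product tensor concavity of the logarithm (Lemma~\ref{lma:log monotone and concave}), which automatically supplies a comparable bound of Jensen type.

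First, exactly as in Corollary~\ref{cor:3_7}, I would bound the trace of the TPD tensor $\exp\!\left(\sum_i \log \mathbb{E} e^{t\mathcal{X}_i}\right)$ by $mp$ times its largest eigenvalue, then apply the spectral mapping lemma~\ref{lma:spectral mapping} to rewrite this as $mp \cdot \exp\!\left(\lambda_{\max}\!\left(\sum_i \log \mathbb{E} e^{t\mathcal{X}_i}\right)\right)$. Second, I would invoke T-product tensor concavity of $\log$ on the positive-definite cone to produce the TPSD comparison $\sum_{i=1}^{n} \log \mathbb{E} e^{t\mathcal{X}_i} \preceq n \log\!\left(\tfrac{1}{n}\sum_{i=1}^{n} \mathbb{E} e^{t\mathcal{X}_i}\right)$. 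Third, using monotonicity of $\lambda_{\max}$ under the TPSD order, together with one further spectral-mapping identity applied to $\log$ acting on a TPD tensor, I obtain $\lambda_{\max}\!\left(\sum_i \log \mathbb{E} e^{t\mathcal{X}_i}\right) \leq n \log \lambda_{\max}\!\left(\tfrac{1}{n}\sum_i \mathbb{E} e^{t\mathcal{X}_i}\right)$. Substituting back into the master bound and taking the infimum over $t>0$ delivers the stated inequality.

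The main obstacle I expect is the iterated concavity step, because Lemma~\ref{lma:log monotone and concave} is stated only for a two-term convex combination. Extending it to an $n$-fold arithmetic mean requires a short induction, and at each stage one must check that the running mean $\tfrac{1}{k}\sum_{i=1}^{k}\mathbb{E} e^{t\mathcal{X}_i}$ remains TPD so that the logarithm is defined and the induction hypothesis applies. This holds because each $\mathbb{E} e^{t\mathcal{X}_i}$ is TPD (the exponential of a Hermitian T-product tensor is TPD by spectral mapping, and expectation preserves the TPSD order via Eq.~\eqref{eq: exp keep TPSD order}) and convex combinations of TPD tensors remain TPD. Once this verification is in hand, the remaining chain of monotonicity and spectral-mapping manipulations is routine and the corollary follows.
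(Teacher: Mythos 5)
Your proposal follows essentially the same route as the paper's proof: apply the master tail bound, use the T-product log concavity of Lemma~\ref{lma:log monotone and concave} to get $\sum_i \log \mathbb{E}e^{t\mathcal{X}_i} \preceq n\log\bigl(\tfrac{1}{n}\sum_i \mathbb{E}e^{t\mathcal{X}_i}\bigr)$, bound the trace of the resulting TPD tensor by $mp$ times its maximum eigenvalue, and finish with spectral mapping; the only difference is that you commute the $\lambda_{\max}$ bound past the concavity step (via monotonicity of $\lambda_{\max}$ under the TPSD order) whereas the paper applies the concavity inside the trace-exponential first using Eq.~\eqref{eq:trace exp function}, which is an immaterial reordering. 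Your observation that the two-term concavity statement needs a short induction (with TPD preservation of the running means) to justify the $n$-fold Jensen step is a point the paper passes over silently, and is correctly resolved in your sketch.
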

\textbf{Proof:}
From T-tensor logarithm concavity property provided by Lemma~\ref{lma:log monotone and concave}, we have 
\begin{eqnarray}
\sum\limits_{i=1}^{n} \log \mathbb{E} e^{t \mathcal{X}_i} = n \cdot \frac{1}{n}  \sum\limits_{i=1}^{n} \log \mathbb{E} e^{t \mathcal{X}_i} \preceq n \log \left( \frac{1}{n} \sum\limits_{i=1}^{n} \mathbb{E} e^{t \mathcal{X}_i} \right),
\end{eqnarray}
and from the trace exponential monotone property provided by Lemma~\ref{lma:monotonicity of trace func}, we have 
\begin{eqnarray}
\mathrm{Pr} \left( \lambda_{\max}\left(\sum\limits_{i=1}^n \mathcal{X}_i \right) \geq \theta \right)
 \leq  e^{- t \theta} \mathrm{Tr} \exp \left(n \log \left( \frac{1}{n} \sum\limits_{i=1}^n \mathbb{E} e^{ t \mathcal{X}_i }  \right) \right)~~~~~~~~~~~~~~~~~~ \nonumber \\
\leq  m p  \inf\limits_{t > 0}  \Big\{ \exp\left[ - t \theta +n \log \lambda_{\max} \left( \frac{1}{n} \sum\limits_{i=1}^{n} \mathbb{E} e^{t \mathcal{X}_i} \right)  \right] \Big\},
\end{eqnarray}
where the last inequality holds since we bound the trace of a positive-definite tensor by the dimension size $m \times p $ multiplied by the maximum eigenvalue and apply spectral mapping theorem twice.
$\hfill \Box$

Similarly, we can generalize master tail bound for independent sum of random Hermitian T-product tensors for eigenvalue version from Theorem~\ref{thm:Master Tail Bound for Independent Sum of Random Tensors} to master tail bound for independent sum of random Hermitian T-product tensors for eigentuple version by the following Theorem~\ref{thm:master tail bound eigentuple}.

\ThmMasterTailBoundEigentuple*

\textbf{Proof:}
By substituting the Lemma~\ref{lma:subadditivity of tensor cgfs} into the Laplace transform bound provided by the Lemma~\ref{lma: Laplace Transform Method Eigentuple Version}, this theorem is established. 
$\hfill \Box$

Some useful corollaries will be provided based on Theorem~\ref{thm:master tail bound eigentuple}.

\begin{corollary}\label{cor:3_7 eigentuple}
Given a finite sequence of independent random Hermitian T-product tensors $\{ \mathcal{X}_i \}$ with dimensions in $\mathbb{C}^{m \times m \times p}$. If there is a function $f: (0, \infty) \rightarrow [0, \infty]$ and a sequence of non-random Hermitian T-product tensors $\{ \mathcal{A}_i \}$ with following condition:
\begin{eqnarray}\label{eq:cond in cor 3_7 eigentuple}
f(t) \mathcal{A}_i \succeq  \log \mathbb{E} e^{t \mathcal{X}_i},~~\mbox{for $t > 0$.}
\end{eqnarray}
Then, for all $\mathbf{b} \in \mathbb{R}^p$ and $\sum\limits_{i=1}^n t \mathcal{X}_i $ satisfing Eq.~\eqref{eq1:lma: Laplace Transform Method Eigentuple Version}, we have
\begin{eqnarray}
\mathrm{Pr} \left( \mathbf{d}_{\max}\left(\sum\limits_{i=1}^n \mathcal{X}_i \right) \geq \mathbf{b} \right)
& \leq & mp \inf\limits_{t > 0} \min\limits_{1 \leq j \leq p} \left\{ \frac{ \exp  \left(  f(t) \lambda_{\max} \left( \sum\limits_{i=1}^n \mathcal{A}_i \right)    \right)    }{ \left( e_{\bigodot}^{t \mathbf{b} }\right)_j } \right\}. 
\end{eqnarray}
\end{corollary}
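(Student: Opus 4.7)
The plan is to combine the master tail bound for eigentuples (Theorem~\ref{thm:master tail bound eigentuple}) with a TPSD domination argument that parallels the proof of Corollary~\ref{cor:3_7}. First, I would invoke Theorem~\ref{thm:master tail bound eigentuple} directly, which, under the hypothesis that $\sum_{i=1}^n t\mathcal{X}_i$ satisfies Eq.~\eqref{eq1:lma: Laplace Transform Method Eigentuple Version}, gives
\begin{eqnarray*}
\mathrm{Pr}\!\left(\mathbf{d}_{\max}\!\left(\sum_{i=1}^n \mathcal{X}_i\right) \geq \mathbf{b}\right)
\leq \inf_{t>0}\min_{1\leq j\leq p}\left\{\frac{\mathrm{Tr}\exp\!\left(\sum_{i=1}^n \log\mathbb{E}e^{t\mathcal{X}_i}\right)}{(e_{\bigodot}^{t\mathbf{b}})_j}\right\}.
\end{eqnarray*}
The remaining task is to upper bound the trace exponential in the numerator.

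Next, I would sum the hypothesis $f(t)\mathcal{A}_i \succeq \log\mathbb{E}e^{t\mathcal{X}_i}$ over $i$ to obtain $\sum_{i=1}^n\log\mathbb{E}e^{t\mathcal{X}_i} \preceq f(t)\sum_{i=1}^n\mathcal{A}_i$, where $f(t)\geq 0$ and $\mathcal{A}_i$ are Hermitian. Then applying the monotonicity of the trace exponential under the TPSD order, as established by Eq.~\eqref{eq:trace exp function}, yields
\begin{eqnarray*}
\mathrm{Tr}\exp\!\left(\sum_{i=1}^n\log\mathbb{E}e^{t\mathcal{X}_i}\right) \leq \mathrm{Tr}\exp\!\left(f(t)\sum_{i=1}^n\mathcal{A}_i\right).
\end{eqnarray*}

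Finally, since the exponential of a Hermitian T-product tensor is TPD (via the spectral mapping Lemma~\ref{lma:spectral mapping}), its trace is bounded by $mp$ times its largest eigenvalue, and applying the spectral mapping lemma a second time together with the nonnegativity of $f(t)$ gives
\begin{eqnarray*}
\mathrm{Tr}\exp\!\left(f(t)\sum_{i=1}^n\mathcal{A}_i\right) \leq mp\,\lambda_{\max}\!\left(\exp\!\left(f(t)\sum_{i=1}^n\mathcal{A}_i\right)\right) = mp\,\exp\!\left(f(t)\lambda_{\max}\!\left(\sum_{i=1}^n\mathcal{A}_i\right)\right).
\end{eqnarray*}
Substituting this back into the master tail bound produces exactly the claimed inequality.

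There is no serious obstacle here: this is essentially a mechanical concatenation of Theorem~\ref{thm:master tail bound eigentuple} with the same TPSD-monotonicity/spectral-mapping chain used in Corollary~\ref{cor:3_7}. The only point requiring care is verifying that the hypothesis of Theorem~\ref{thm:master tail bound eigentuple} (namely Eq.~\eqref{eq1:lma: Laplace Transform Method Eigentuple Version} for $\sum_{i=1}^n t\mathcal{X}_i$) is indeed invoked, which is assumed in the statement, and that $f(t)\geq 0$ is used to justify pulling the scalar through the $\lambda_{\max}$ via spectral mapping.
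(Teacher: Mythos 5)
Your proposal is correct and follows essentially the same route as the paper's proof: apply Theorem~\ref{thm:master tail bound eigentuple}, dominate the numerator via the summed hypothesis and trace-exponential monotonicity, then bound the trace by $mp\,\lambda_{\max}$ and pull $f(t)$ out through the spectral mapping lemma. Your version is in fact slightly more careful than the paper's, since you explicitly justify the step from $\mathrm{Tr}\exp\left(\sum_i \log\mathbb{E}e^{t\mathcal{X}_i}\right)$ to $\mathrm{Tr}\exp\left(f(t)\sum_i\mathcal{A}_i\right)$ via Eq.~\eqref{eq:trace exp function}, which the paper leaves implicit.
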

\textbf{Proof:}
From the condition provided by Eq.~\eqref{eq:cond in cor 3_7 eigentuple} and Theorem~\ref{thm:master tail bound eigentuple}, we have 
\begin{eqnarray}
\mathrm{Pr} \left( \mathbf{d}_{\max}\left(\sum\limits_{i=1}^n \mathcal{X}_i \right) \geq \mathbf{b} \right) & \leq &  \inf\limits_{t > 0} \min\limits_{1 \leq j \leq p} \left\{ \frac{ \mathrm{Tr} \exp\left( f(t) \sum\limits_{i=1}^n \mathcal{A}_i  \right)     }{ \left( e_{\bigodot}^{t \mathbf{b} }\right)_j } \right\} \nonumber \\
 & \leq &  mp \inf\limits_{t > 0} \min\limits_{1 \leq j \leq p} \left\{ \frac{ \lambda_{\max} \left( \exp  \left(  f(t) \sum\limits_{i=1}^n \mathcal{A}_i \right)    \right)    }{ \left( e_{\bigodot}^{t \mathbf{b} }\right)_j } \right\} \nonumber \\
& = &  mp \inf\limits_{t > 0} \min\limits_{1 \leq j \leq p} \left\{ \frac{ \exp  \left(  f(t) \lambda_{\max} \left( \sum\limits_{i=1}^n \mathcal{A}_i \right)    \right)    }{ \left( e_{\bigodot}^{t \mathbf{b} }\right)_j } \right\} 
\end{eqnarray}
where the second inequality holds since we bound the trace of a TPD T-tensor by the eigenvalue size with $m \times p$ multiplied by the maximum eigenvalue; the last equality is based on the spectral mapping theorem since the function $f$ is nonnegative. This corollary is proved.
$\hfill \Box$

\begin{corollary}\label{cor:3_9 eigentuple}
Given a finite sequence of independent random Hermitian T-product tensors $\{ \mathcal{X}_i \}$ with dimensions in $\mathbb{C}^{m \times m \times p}$, a real vector $\mathbf{b} \in \mathbb{R}^p$ and $\sum\limits_{i=1}^n t \mathcal{X}_i $ satisfing Eq.~\eqref{eq1:lma: Laplace Transform Method Eigentuple Version}, we have
\begin{eqnarray}
\mathrm{Pr} \left( \mathbf{d}_{\max}\left(\sum\limits_{i=1}^n \mathcal{X}_i \right) \geq \mathbf{b} \right)
& \leq & m p \inf\limits_{t > 0} \min\limits_{1 \leq j \leq p} \left\{ \frac{\exp \left( n \log \lambda_{\max}\left( \frac{1}{n} \sum\limits_{i=1}^{n}\mathbb{E}e^{t \mathcal{X}_i } \right) \right)     }{ \left( e_{\bigodot}^{t \mathbf{b} }\right)_j } \right\} 
\end{eqnarray}
\end{corollary}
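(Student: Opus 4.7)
The plan is to mirror the proof of Corollary~\ref{cor:3_9} (the eigenvalue analog) but start from the eigentuple master tail bound. That is, I would take the bound from Theorem~\ref{thm:master tail bound eigentuple}, namely
\[
\mathrm{Pr}\!\left(\mathbf{d}_{\max}\!\left(\sum_{i=1}^n \mathcal{X}_i\right)\!\geq \mathbf{b}\right) \leq \inf_{t>0}\min_{1\leq j \leq p}\left\{\frac{\mathrm{Tr}\exp\!\left(\sum_{i=1}^n \log \mathbb{E}e^{t\mathcal{X}_i}\right)}{\left(e_{\bigodot}^{t\mathbf{b}}\right)_j}\right\},
\]
and then massage the numerator exactly as in Corollary~\ref{cor:3_9} to turn it into $mp\cdot \exp\bigl(n\log\lambda_{\max}(\tfrac{1}{n}\sum_i \mathbb{E}e^{t\mathcal{X}_i})\bigr)$. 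The denominator already has the desired form, so nothing needs to be done on that side.

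The key steps, in order, are as follows. First, I would invoke T-product tensor concavity of the logarithm (Lemma~\ref{lma:log monotone and concave}) to write
\[
\sum_{i=1}^{n} \log \mathbb{E} e^{t \mathcal{X}_i} \;=\; n \cdot \frac{1}{n}\sum_{i=1}^{n}\log \mathbb{E} e^{t\mathcal{X}_i} \;\preceq\; n\log\!\left(\frac{1}{n}\sum_{i=1}^{n} \mathbb{E} e^{t \mathcal{X}_i}\right).
\]
Second, I would apply the monotonicity of the trace exponential from Eq.~\eqref{eq:trace exp function} (itself a corollary of Theorem~\ref{thm:convexity and monotonicity of a trace func intro}) to push this TPSD inequality through $\mathrm{Tr}\exp(\cdot)$. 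Third, because the resulting argument $\exp(n\log(\tfrac{1}{n}\sum_i \mathbb{E}e^{t\mathcal{X}_i}))$ is TPD, its trace is bounded by the dimension $mp$ times its maximum eigenvalue. Fourth, I would apply the spectral mapping lemma (Lemma~\ref{lma:spectral mapping}) twice, once to commute $\lambda_{\max}$ with the outer exponential and once with the $n\log$, giving $\lambda_{\max}\exp(n\log(\cdot)) = \exp\bigl(n\log\lambda_{\max}(\tfrac{1}{n}\sum_i \mathbb{E}e^{t\mathcal{X}_i})\bigr)$. Finally, dividing by $(e_{\bigodot}^{t\mathbf{b}})_j$, taking the minimum over $j$, and the infimum over $t>0$ yields the claimed bound.

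I do not expect serious obstacles here, as every ingredient has been proved earlier in the paper: the eigentuple master bound, logarithm concavity, trace-exponential monotonicity, the trace vs. max-eigenvalue bound for TPD tensors, and spectral mapping. The one point that requires a little care is verifying that applying the TPSD relation inside $\mathrm{Tr}\exp(\cdot)$ is legal in the eigentuple setting; this is fine since Eq.~\eqref{eq:trace exp function} is a scalar inequality on the trace and is independent of whether one subsequently controls the tail via eigenvalues or eigentuples. The resulting chain of inequalities, combined with the denominator inherited from Theorem~\ref{thm:master tail bound eigentuple}, immediately gives the corollary. $\hfill \Box$
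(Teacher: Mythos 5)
Your proposal is correct and follows essentially the same route as the paper's own proof: start from Theorem~\ref{thm:master tail bound eigentuple}, apply the logarithm concavity of Lemma~\ref{lma:log monotone and concave}, push the TPSD relation through the trace exponential, bound the trace of the resulting TPD tensor by $mp$ times its maximum eigenvalue, and finish with the spectral mapping argument. No meaningful differences from the paper's argument.
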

\textbf{Proof:}
From T-tensor logarithm concavity property provided by Lemma~\ref{lma:log monotone and concave}, we have 
\begin{eqnarray}
\sum\limits_{i=1}^{n} \log \mathbb{E} e^{t \mathcal{X}_i} = n \cdot \frac{1}{n}  \sum\limits_{i=1}^{n} \log \mathbb{E} e^{t \mathcal{X}_i} \preceq n \log \left( \frac{1}{n} \sum\limits_{i=1}^{n} \mathbb{E} e^{t \mathcal{X}_i} \right),
\end{eqnarray}
and from the trace exponential monotone property provided by Lemma~\ref{lma:monotonicity of trace func}, we also have 
\begin{eqnarray}
\mathrm{Pr} \left( \mathbf{d}_{\max}\left(\sum\limits_{i=1}^n \mathcal{X}_i \right) \geq  \mathbf{b} \right) 
& \leq &  \inf\limits_{t > 0} \min\limits_{1 \leq j \leq p  } \left\{ \frac{ \mathrm{Tr} \exp\left( \sum\limits_{i=1}^n \log \mathbb{E}e^{t \mathcal{X}_i }  \right)     }{ \left( e_{\bigodot}^{t \mathbf{b} }\right)_j } \right\} \nonumber \\
& \leq & \inf\limits_{t > 0} \min\limits_{1 \leq j \leq p} \left\{ \frac{ \mathrm{Tr} \exp\left( n \log \left( \frac{1}{n} \sum\limits_{i=1}^{n} \mathbb{E} e^{t \mathcal{X}_i} \right)   \right)     }{ \left( e_{\bigodot}^{t \mathbf{b} }\right)_j } \right\} \nonumber \\
& \leq & m p \inf\limits_{t > 0} \min\limits_{1 \leq j \leq p} \left\{ \frac{\exp \left( n \log \lambda_{\max}\left( \frac{1}{n} \sum\limits_{i=1}^{n}\mathbb{E}e^{t \mathcal{X}_i } \right) \right)     }{ \left( e_{\bigodot}^{t \mathbf{b} }\right)_j } \right\} 
\end{eqnarray}
where the last inequality holds since we bound the trace of a TPD tensor by the eigenvalue size with $m \times p$ multiplied by the maximum eigenvalue; and spectral mapping theorem for $\log$ and $\exp$ functions. 
$\hfill \Box$

\section{Courant-Fischer Theorem under T-product Tensors and Minimum Eigenvalue/Eigentuple}\label{sec:Courant-Fischer Theorem under T-product Tensors and Minimum}

In this section, Courant-Fischer theorem for T-product tensors will be proved and this theorem will be used to show the relationship between the maximum eigentuple and the minimum eigentule of TPD T-product tensors. Let us recall~\cref{thm:Courant-Fischer Theorem under T-product}. 

\ThmCourantFischerTProduct*

\textbf{Proof:}
We will just prove the first characterization of $\mathbf{d}_k$. The other can be proved similarly. 

First, we wish to show that $\mathbf{d}_k$ is achievable. As $S_k$ is the space spanned by
$\{\mathbf{U}_{j}^{[k]} \}$  for $1 \leq j \leq k$ and $0 \leq l \leq p-1$. For every $\mathbf{X} \in S_k$, we can express $\mathbf{X}$ as 
\begin{eqnarray}
\mathbf{X} &=& \sum\limits_{j=1}^{k}\sum\limits_{l=0}^{p-1} \alpha_j^{[l]} \mathbf{U}_{j}^{[l]}.
\end{eqnarray}
Then, we have
\begin{eqnarray}
\left(\mathbf{X}^{\mathrm{H}} \star \mathcal{A} \star \mathbf{X} \right) \bigg /_{\bigodot} \left( \mathbf{X}^{\mathrm{H}} \star \mathbf{X} \right) &=& 
 \sum\limits_{j=1}^{k}\sum\limits_{l=0}^{p-1} \left( \alpha_j^{[l]}\right)^2  \mathbf{d}_{j} \bigg /_{\bigodot}  \sum\limits_{j=1}^{k}\sum\limits_{l=0}^{p-1}   \left( \alpha_j^{[l]}\right)^2    \mathbf{e} \nonumber \\
&\geq &   \sum\limits_{j=1}^{k}\sum\limits_{l=0}^{p-1} \left( \alpha_j^{[l]}\right)^2  \mathbf{d}_{k}  \bigg /_{\bigodot}  \sum\limits_{j=1}^{k}\sum\limits_{l=0}^{p-1}   \left( \alpha_j^{[l]}\right)^2    \mathbf{e} \nonumber \\
& = &  \mathbf{d}_k
\end{eqnarray}
where $\mathbf{e} = (1, 0, \cdots, 0)^{\mathrm{T}} \in \mathbb{C}^{p}$. 

To verify that this is the maximum eigentuple, as $T_k$ is the space spanned by $\{\mathbf{U}_{j}^{[l]} \}$  for $k \leq j \leq m$ and $0 \leq l \leq p-1$, for any $S_k$ with dimension $k \times p$ the intersection of $S_k$ with $T_k$ is non-empty. Then, we also have 
\begin{eqnarray}
\min\limits_{\mathbf{X} \in S_k}  \left(\mathbf{X}^{\mathrm{H}} \star \mathcal{A} \star \mathbf{X} \right) \bigg /_{\bigodot} \left( \mathbf{X}^{\mathrm{H}} \star \mathbf{X} \right)
& \leq &  \min\limits_{\mathbf{X} \in S_k \cap T_k}  \left(\mathbf{X}^{\mathrm{H}} \star \mathcal{A} \star \mathbf{X} \right) \bigg /_{\bigodot} \left( \mathbf{X}^{\mathrm{H}} \star \mathbf{X} \right).
\end{eqnarray}
Any such $\mathbf{X}$ can be expressed as 
\begin{eqnarray}
\mathbf{X} &=& \sum\limits_{j=k}^{m}\sum\limits_{l=0}^{p-1} \alpha_j^{[l]} \mathbf{U}_{j}^{[l]},
\end{eqnarray}
then, we have 
\begin{eqnarray}
\left(\mathbf{X}^{\mathrm{H}} \star \mathcal{A} \star \mathbf{X} \right) \bigg /_{\bigodot} \left( \mathbf{X}^{\mathrm{H}} \star \mathbf{X} \right) &=& 
 \sum\limits_{j=k}^{m}\sum\limits_{l=0}^{p-1} \left( \alpha_j^{[l]}\right)^2  \mathbf{d}_{j} \bigg /_{\bigodot}  \sum\limits_{j=k}^{m}\sum\limits_{l=0}^{p-1}   \left( \alpha_j^{[l]}\right)^2    \mathbf{e} \nonumber \\
&\leq &   \sum\limits_{j=k}^{m}\sum\limits_{l=0}^{p-1} \left( \alpha_j^{[l]}\right)^2  \mathbf{d}_{k}  \bigg /_{\bigodot}  \sum\limits_{j=k}^{m}\sum\limits_{l=0}^{p-1}   \left( \alpha_j^{[l]}\right)^2    \mathbf{e} \nonumber \\
& = &  \mathbf{d}_k.
\end{eqnarray}
Therefore, all subspace of $S_k$ with dimension $k \times p$, we have 
\begin{eqnarray}
\min\limits_{\mathbf{X} \in S_k} \left(\mathbf{X}^{\mathrm{H}} \star \mathcal{A} \star \mathbf{X} \right) \bigg /_{\bigodot} \left( \mathbf{X}^{\mathrm{H}} \star \mathbf{X} \right)  \le \mathbf{d}_k.
\end{eqnarray}
This theorem is proved since $\mathbf{d}_k$ is achievable and is the maximum eigentuple. 
$\hfill \Box$

By applying Theorem~\ref{thm:Courant-Fischer Theorem under T-product}, we have 
following relations: 
\begin{eqnarray}\label{eq:min eigen and max eigen relations}
\mathbf{d}_{\min}(\mathcal{X}) = - \mathbf{d}_{\max}( - \mathcal{X})~~\mbox{and}~~ 
\lambda_{\min}(\mathcal{X}) = - \lambda_{\max}( - \mathcal{X})
\end{eqnarray}

\section{Conclusion}\label{sec:Conclusion}
In this Part I work, we try to establish following inequalities about T-product tensors: (1) trace function nondecreasing/convexity; (2) Golden-Thompson inequality for T-product tensors; (3) Jensen’s T-product inequality; (4) Klein’s T-product inequality. All these inequalities are used to generalize celebrated Lieb’s concavity theorem from matrices to T-product tensors. Then, this new version of Lieb's concavity theorem under T-product tensor is utilized to build master tail bounds for the maximum eigenvalue and the maximum eigentuple induced by independent sums of random Hermitian T-product. In order to find the relationship between the minimum eigentuple and the maximum eigentuple, we also extended the Courant-Fischer Theorem from matrices to T-product tensors. How these new inequalities and Courant-Fischer Theorem under T-product are used to derive new tail bounds of the extreme eigenvalue and eigentuple for sums of random T-product tensors is the main goal of our Part II paper.


\bibliographystyle{IEEETran}
\bibliography{Random_TBounds_TProduct_Bib}

\begin{thebibliography}{10}
\providecommand{\url}[1]{#1}
\csname url@samestyle\endcsname
\providecommand{\newblock}{\relax}
\providecommand{\bibinfo}[2]{#2}
\providecommand{\BIBentrySTDinterwordspacing}{\spaceskip=0pt\relax}
\providecommand{\BIBentryALTinterwordstretchfactor}{4}
\providecommand{\BIBentryALTinterwordspacing}{\spaceskip=\fontdimen2\font plus
\BIBentryALTinterwordstretchfactor\fontdimen3\font minus
  \fontdimen4\font\relax}
\providecommand{\BIBforeignlanguage}[2]{{%
\expandafter\ifx\csname l@#1\endcsname\relax
\typeout{** WARNING: IEEEtran.bst: No hyphenation pattern has been}%
\typeout{** loaded for the language `#1'. Using the pattern for}%
\typeout{** the default language instead.}%
\else
\language=\csname l@#1\endcsname
\fi
#2}}
\providecommand{\BIBdecl}{\relax}
\BIBdecl

\bibitem{qi2021tquadratic}
L.~Qi and X.~Zhang, ``T-quadratic forms and spectral analysis of t-symmetric
  tensors,'' 2021.

\bibitem{kilmer2011factorization}
M.~E. Kilmer and C.~D. Martin, ``Factorization strategies for third-order
  tensors,'' \emph{Linear Algebra and its Applications}, vol. 435, no.~3, pp.
  641--658, 2011.

\bibitem{kilmer2013third}
M.~E. Kilmer, K.~Braman, N.~Hao, and R.~C. Hoover, ``Third-order tensors as
  operators on matrices: A theoretical and computational framework with
  applications in imaging,'' \emph{SIAM Journal on Matrix Analysis and
  Applications}, vol.~34, no.~1, pp. 148--172, 2013.

\bibitem{zhang2016exact}
Z.~Zhang and S.~Aeron, ``Exact tensor completion using t-svd,'' \emph{IEEE
  Transactions on Signal Processing}, vol.~65, no.~6, pp. 1511--1526, 2016.

\bibitem{semerci2014tensor}
O.~Semerci, N.~Hao, M.~E. Kilmer, and E.~L. Miller, ``Tensor-based formulation
  and nuclear norm regularization for multienergy computed tomography,''
  \emph{IEEE Transactions on Image Processing}, vol.~23, no.~4, pp. 1678--1693,
  2014.

\bibitem{settles2007multiple}
B.~Settles, M.~Craven, and S.~Ray, ``Multiple-instance active learning,''
  \emph{Advances in neural information processing systems}, vol.~20, pp.
  1289--1296, 2007.

\bibitem{zhang2014novel}
Z.~Zhang, G.~Ely, S.~Aeron, N.~Hao, and M.~Kilmer, ``Novel methods for
  multilinear data completion and de-noising based on tensor-svd,'' in
  \emph{Proceedings of the IEEE conference on computer vision and pattern
  recognition}, 2014, pp. 3842--3849.

\bibitem{martin2013order}
C.~D. Martin, R.~Shafer, and B.~LaRue, ``An order-p tensor factorization with
  applications in imaging,'' \emph{SIAM Journal on Scientific Computing},
  vol.~35, no.~1, pp. A474--A490, 2013.

\bibitem{khalil2021efficient}
N.~Khalil, A.~Sarhan, and M.~A. Alshewimy, ``An efficient color/grayscale image
  encryption scheme based on hybrid chaotic maps,'' \emph{Optics \& Laser
  Technology}, vol. 143, p. 107326, 2021.

\bibitem{xu2013parallel}
Y.~Xu, R.~Hao, W.~Yin, and Z.~Su, ``Parallel matrix factorization for low-rank
  tensor completion,'' \emph{Inverse Problems and Imaging}, vol.~9, no.~2, pp.
  601--624, Dec. 2013.

\bibitem{zhou2017tensor}
P.~Zhou, C.~Lu, Z.~Lin, and C.~Zhang, ``Tensor factorization for low-rank
  tensor completion,'' \emph{IEEE Transactions on Image Processing}, vol.~27,
  no.~3, pp. 1152--1163, 2017.

\bibitem{qi2021tsingular}
L.~Qi and G.~Yu, ``T-singular values and t-sketching for third order tensors,''
  2021.

\bibitem{braman2010third}
K.~Braman, ``Third-order tensors as linear operators on a space of matrices,''
  \emph{Linear Algebra and its Applications}, vol. 433, no.~7, pp. 1241--1253,
  2010.

\bibitem{li2020continuity}
X.~Li, Y.~Wang, and Z.-H. Huang, ``Continuity, differentiability and
  semismoothness of generalized tensor functions,'' \emph{Journal of Industrial
  \& Management Optimization}, vol. doi: 10.3934/jimo.2020131, 2020.

\bibitem{zheng2021t}
M.-M. Zheng, Z.-H. Huang, and Y.~Wang, ``T-positive semidefiniteness of
  third-order symmetric tensors and t-semidefinite programming,''
  \emph{Computational Optimization and Applications}, vol.~78, no.~1, pp.
  239--272, 2021.

\bibitem{miao2021t}
Y.~Miao, L.~Qi, and Y.~Wei, ``T-jordan canonical form and {T}-{D}razin inverse
  based on the {T}-product,'' \emph{Communications on Applied Mathematics and
  Computation}, vol.~3, no.~2, pp. 201--220, 2021.

\bibitem{miao2020generalized}
------, ``Generalized tensor function via the tensor singular value
  decomposition based on the {T}-product,'' \emph{Linear Algebra and its
  Applications}, vol. 590, pp. 258--303, 2020.

\bibitem{Lemm_2018}
\BIBentryALTinterwordspacing
M.~Lemm, ``On multivariate trace inequalities of sutter, berta, and
  tomamichel,'' \emph{Journal of Mathematical Physics}, vol.~59, no.~1, p.
  012204, Jan 2018. [Online]. Available:
  \url{http://dx.doi.org/10.1063/1.5001009}
\BIBentrySTDinterwordspacing

\bibitem{chang2020convenient}
S.~Y. Chang, ``Convenient tail bounds for sums of random tensors,''
  \emph{arXiv, 2012.15428, math.PR}, 2021.

\bibitem{chang2021general}
------, ``General tail bounds for random tensors summation: Majorization
  approach,'' \emph{arXiv, 2105.06078, math.PR}, 2021.

\bibitem{hiai2010matrix}
F.~Hiai, ``Matrix analysis: matrix monotone functions, matrix means, and
  majorization,'' \emph{Interdisciplinary Information Sciences}, vol.~16,
  no.~2, pp. 139--248, 2010.

\bibitem{chansangiam2015survey}
P.~Chansangiam, ``A survey on operator monotonicity, operator convexity, and
  operator means,'' \emph{International Journal of Analysis}, vol. 2015, 2015.

\bibitem{csiszar1998method}
I.~Csisz{\'a}r, ``The method of types [information theory],'' \emph{IEEE
  Transactions on Information Theory}, vol.~44, no.~6, pp. 2505--2523, 1998.

\bibitem{MR2475796}
\BIBentryALTinterwordspacing
E.~G. Effros, ``A matrix convexity approach to some celebrated quantum
  inequalities,'' \emph{Proc. Natl. Acad. Sci. USA}, vol. 106, no.~4, pp.
  1006--1008, 2009. [Online]. Available:
  \url{https://doi.org/10.1073/pnas.0807965106}
\BIBentrySTDinterwordspacing

\bibitem{MR1966716}
\BIBentryALTinterwordspacing
R.~Ahlswede and A.~Winter, ``Addendum to: ``{S}trong converse for
  identification via quantum channels'' [{IEEE} {T}rans. {I}nform. {T}heory
  {\bf 48} (2002), no. 3, 569--579; {MR}1889969 (2003d:94069)],'' \emph{IEEE
  Trans. Inform. Theory}, vol.~49, no.~1, p. 346, 2003. [Online]. Available:
  \url{https://doi.org/10.1109/TIT.2002.806161}
\BIBentrySTDinterwordspacing

\end{thebibliography}

\end{document}